\pgfplotsset{compat=1.15}
\crefname{equation}{eq.}{eqs.}
\Crefname{equation}{Equation}{Equations}
\newcommand{\F}{\mathcal{F}}
\newcommand{\Sc}{\mathcal{S}}
\newcommand{\R}{\mathbb{R}}
\newcommand{\N}{\mathbb{N}}
\newcommand{\Z}{\mathbb{Z}}
\newcommand{\jpb}[1]{\langle #1 \rangle}
\newcommand{\norm}[1]{\| #1 \|}
\newcommand{\abs}[1]{\left| #1 \right|}
\newcommand{\intR}{\int_{\R}}
\DeclareMathOperator\supp{supp}
\DeclareMathOperator\dist{dist}
\newtheorem{theorem}{Theorem}[section]
\newtheorem{lemma}[theorem]{Lemma}
\newtheorem{proposition}[theorem]{Proposition}
\newenvironment{assumption}[1]
  {\innerassump}
  {\endinnerassump}
\crefname{innerassump}{Assumption}{Assumptions}
\Crefname{innerassump}{Assumption}{Assumptions}
\theoremstyle{remark}
\newtheorem{remark}[theorem]{Remark}
\numberwithin{equation}{section}
\title[Solitary waves for fully nonlocally nonlinear equations]{Solitary waves for  dispersive equations with Coifman--Meyer nonlinearities}
\author{Johanna Ulvedal Marstrander}
\address{Department of Mathematical Sciences, NTNU -- Norwegian University of Science and Technology, 7491 Trondheim, Norway}
\email{johanna.u.marstrander@ntnu.no}
\keywords{Solitary waves, nonlinear dispersive equations, nonlocal equations, calculus of variations}
\subjclass{76B15, 76B25, 35A15, 35S99}
\thanks{The author acknowledges the support of the project IMod (Grant No. 325114) from the Research Council of Norway.}
\begin{document}

\begin{abstract}
Using a modified version of Weinstein's argument for constrained minimization in nonlinear dispersive equations, we prove existence of solitary waves in fully nonlocally nonlinear equations, as long as the linear multiplier is of positive and slightly higher order than the Coifman--Meyer nonlinear multiplier. It is therefore the relative order of the linear term over the nonlinear one that determines the method and existence for these types of equations. In analogy to Korteweg--De Vries-type equations and water waves in the capillary regime, smooth solutions of all amplitudes can be found. We consider two structural types of symmetric Coifman--Meyer symbols \(n(\xi-\eta,\eta)\), and show that cyclical symmetry is necessary for the existence of a functional formulation. Estimates for the solution and wave speed are given as the solutions tend to the bifurcation point of solitary waves. 
\end{abstract}
\setcounter{page}{1} 
\maketitle

\section{Introduction}
\label{sec:introduction}
We search for solitary-wave solutions to the evolution equation
\begin{equation}\label{eq:time_dependent}
    u_t + (Mu - N(u,u))_x= 0,
\end{equation}
where \(u\) is a real-valued function of time and space, and \(M\) and \(N\) are linear and bilinear Fourier multipliers of the form
\begin{equation*}
    \widehat{Mu}(\xi) = m(\xi) \hat{u}(\xi),    
\end{equation*}
and
\begin{equation}\label{eq:N}
 \widehat{N(u,u)}= \int_{\R}n(\xi-\eta, \eta) \hat{u}(\xi-\eta)\hat{u}(\eta)\,d\eta.
\end{equation}
Precise conditions on the symbols \(m\) and \(n\) will be given below. With solitary waves we shall mean solutions \((x,t)\mapsto u(x-\nu t)\) that propagate with fixed shape at velocity \(\nu\), and vanish at infinity.
 For such solutions, \cref{eq:time_dependent} becomes a pseudo-differential nonlinear equation in a single variable.
 
\subsection{Background}
The problem \eqref{eq:time_dependent} is a generalization of a vast class of uni-directional nonlinear wave equations,
\begin{equation}
    u_t + (M u - f(u))_x = 0,
    \label{eq:model_equation}
\end{equation}
arising in the study of waves in water and other dispersive media. In \cref{eq:model_equation}, \(f\) is a local nonlinear function, typically of the form \(f(u) = \abs{u}^p\) or \(f(u) = u\abs{u}^{p-1}\). Whereas this is justified in many models, including the classical Korteweg--De Vries (KdV) and Boussinesq models \cite{bona2002,lannes2013}, more exact modeling typically produces nonlinear terms that incorporate mixing in the frequencies between \(u\) and itself \cite{germain2012,ionescu2018}, so-called pseudo-products or Coifman--Meyer Fourier operators \cite{coifman1978}. Two examples of this are fully nonlinearly dispersive equations, and the Babenko formulation for traveling two-dimensional {water waves \cite{babenko1987,constantin2016,hur2019,duchene2016}}. Nonlinear frequency interactions such as in \cref{eq:N} appear naturally in signal processing too, where they describe co-linear dependencies between frequency bands \cite{cohen2014}. Note that when \(n(\xi-\eta, \eta) = 1\) and \(p=2\), \cref{eq:time_dependent,eq:model_equation} coincide; and, more generally, that \(i \xi = i(\xi - \eta) + i\eta\) becomes the pseudo-product counterpart of the Leibniz rule. Our purpose in this paper is to establish a solitary existence theory comparable to the one for \cref{eq:model_equation}, but for equations with bilinear operators \(N\).

Equations such as \eqref{eq:model_equation} have been studied extensively for positive-order operators \(M\), and more recently under assumptions allowing \(M\) to be a negative-order nonlocal operator. Solitary waves, in general, may be constructed in a number of ways, see the introduction in \cite{arnesen2023}. A variational existence theory for steady waves was developed by Turner \cite{turner1984}, based on \cite{bona1983}. Weinstein presented an 'existence and dynamical stability'-argument in \cite{weinstein1987}, where the Hamiltonian energy is minimized under an \(L^2\)-constraint using Lions' then newly developed concentration--compactness method \cite{lions1984}; this method gives the wave speed as a Lagrange multiplier. Other variational existence proofs also use quadratically constrained minimization \cite{buffoni2004,groves2011}. Weinstein's results have later been improved or generalized in several papers, including the series with \cite{albert1999,arnesen2016} and \cite{maehlen2020} leading up to this investigation. In short: when the nonlinearity is leading-order quadratic for small solutions, then the cut-off for known existence of solitary waves is \(s = \frac{1}{3}\), and for \(s\leq \frac{1}{3}\) there can be no \(H^1\) solitary-wave solutions when the symbol \(m\) is homogeneous and the nonlinearity is a pure power \cite{linares2014}. Note that equations with a pure power nonlinearity \(f\) are scalable, allowing for large waves, while variational existence proofs for equations with non-homogeneous nonlinearities usually depend on solutions being sufficiently small, as in \cite{maehlen2020}. 

Symbols \(m\) of negative order, which appear in so-called Whitham-type equations, have also been considered. Although these are below the threshold \(s = \frac{1}{3}\), they are different enough from the above case to allow for solitary waves, both small \cite{ehrnstrom2012, stefanov2020, hildrum2020}, medium-sized \cite{arnesen2023}, and highest \cite{truong2022,ehrnstrom2023}. Such differentiation of sizes and `highest' solutions do not exist in the same way for \(s > 0\), and it is an effect of regularity breakdown in the equations at a local extremum of the nonlinearity.  Minimization techniques for negative \(s\) such as in \cite{ehrnstrom2012} have been developed in \cite{duchene2018,nilsson2019,dinvay2021}, where the concentration--compactness method is applied to bidirectional systems with terms where Fourier multipliers and nonlinearities are entangled. Contrary to the present case, however, all Fourier multipliers in the above-mentioned papers are linear. Full-dispersion Whitham-type systems with surface tension have been considered in \cite{ehrnstrom2018} and \cite{ehrnstrom2022a}. 
Fully mixed nonlocal and nonlinear terms also arise in the works \cite{ehrnstrom2022a,groves2008,buffoni2018}, where a local variational reduction is used to construct solitary waves. 
\\
\begin{figure}[htb]
    \begin{tikzpicture}
        \begin{axis}[
            no markers,
            axis on top,
            axis lines=middle,
            xmin=-0.5,xmax=3.3,ymin=-2,ymax=3.3,
            xtick distance=0.5,
            ytick distance=1,
            grid style={thin,black!20},
            xlabel=$s$,
            ylabel=$r$,
            grid=minor,
            font = \normalsize]
            \addplot+[name path= A, thick, black, dotted, samples = 10] {x-1/2};
            \addplot+[name path= C, thick, dotted,black,  samples = 10] {(9*x - 3)/4};
            \addplot+[name path = D, draw = none, samples= 10] {-2};
            \addplot[black!15, on layer=pre main] fill between[of=A and D, soft clip={domain= 0.2:3.5}];
            \addplot[black!15, on layer=pre main] fill between[of=C and D, soft clip={domain= 0:0.21}];
            \node at (2.5, 2.75) {\(r = s-\frac{1}{2}\)};
            \node at (1, 2.75) {\(r=\frac{9s-3}{4}\)};
        \end{axis}
    \end{tikzpicture}
    \caption{We show existence of solitary-wave solutions for values of \(s, r\) in the gray region, where \(r=\sum_{i=1}^3 r_i\), see \cref{assump:N}. The upper bound for small \(s\) will depend on the values of each \(r_i\), not only the sum. Here, we illustrate the bound when \(r_1 = r_2 = r_3\).}
    \label{fig:rsplot}
    \end{figure}
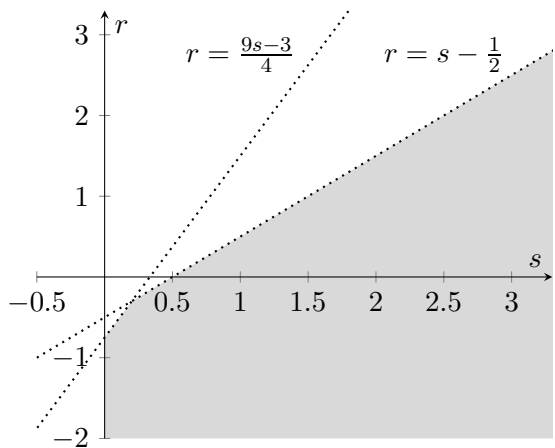

\subsection{Results}
We wish to study nonlocal nonlinearities systematically for one-dimensional model equations.  {As a first step, we start from the theory for positive-order \(M\), where our assumptions will correspond to the setting of finite depth and strong surface tension. A further discussion of the relation to existing water wave models can be found below, as well as in the Appendix.} Taking the above line of research one step further, we shall vary the nonlinearity using a bilinear Fourier operator. Postponing a precise definition of the order of \(N\), {refer to it for the moment as \(r\)}. Just as \(s= 1/3\) is a lower bound for quadratic nonlinearities in the classical case of \cref{eq:model_equation} when \(r = 0\) (so that \(N\) preserves the function space), we analogously find a relationship between the orders \(r\) and \(s\) in our case. And when the order of \(N\) is sufficiently negative, we find solitary-wave solutions also when \(0<s\leq\frac{1}{3}\). Under an assumption on \(N\) that gives rigorous meaning to the concept of \(r\), the allowed parameter ranges are visualized in \cref{fig:rsplot}. 
For such values of \(r\) and \(s\), solitary solutions are found for any size \(\mu = \frac{1}{2}\intR u^2 \,dx\). The main theorem is as follows.

\begin{theorem}[Existence of solitary-wave solutions]
For every \(\mu>0\), there is a solution \(u\in H^{\infty}(\R)\) of the solitary-wave equation, \cref{eq:solitary_wave}, satisfying \(\frac{1}{2}\norm{u}_{L^{2}}^{2} = \mu\). 
The corresponding wave speed \({\nu<m(0)}\) is subcritical, and the estimates
\begin{itemize}
    \item[(i)] \(\norm{u}_{L^{\infty}}\lesssim\norm{u}_{H^{\frac{s}{2}}}\eqsim \mu^{\frac{1}{2}}\),\\
    \item[(ii)] \(m(0) - \nu\eqsim \mu^{\beta}, \,\, \beta = \frac{s'}{2s'-1}>\frac{1}{2}\),
\end{itemize}
hold uniformly for \(0 < \mu <\mu_0\) for any fixed \(\mu_0\).
\label{thm:main}
\end{theorem}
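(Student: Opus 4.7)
The plan is to construct $u$ as a constrained minimizer of a Hamiltonian-type energy in the spirit of Weinstein, with the wave speed $\nu$ arising as the Lagrange multiplier. Substituting the traveling-wave ansatz into \cref{eq:time_dependent} and integrating once, using decay at infinity, turns the problem into
\[
Mu - N(u,u) = \nu u.
\]
Provided $n$ satisfies the cyclical symmetry highlighted in the abstract, this is the Euler--Lagrange equation of
\[
\mathcal{E}(u) = \tfrac{1}{2}\langle Mu, u\rangle_{L^2} - \tfrac{1}{3}\langle N(u,u), u\rangle_{L^2}
\]
restricted to the sphere $\Sigma_\mu = \{u\in H^{s/2}(\R) : \tfrac{1}{2}\norm{u}_{L^{2}}^{2} = \mu\}$; set $I_\mu = \inf_{\Sigma_\mu}\mathcal{E}$.

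First I would verify that $\mathcal{E}$ is well-defined and $C^1$ on $H^{s/2}(\R)$, using the Coifman--Meyer bound on $N$ (of total order $r<s$) together with Gagliardo--Nirenberg interpolation to obtain an estimate of the form
\[
\abs{\langle N(u,u),u\rangle_{L^2}} \lesssim \norm{u}_{L^{2}}^{a}\norm{u}_{H^{s/2}}^{b}, \qquad b<2,
\]
with exponents depending on how the total order $r$ is split between the three components $r_1,r_2,r_3$. This inequality shows both that minimising sequences in $\Sigma_\mu$ are bounded in $H^{s/2}$ and that $I_\mu$ is finite. Evaluating $\mathcal{E}$ on the dilated profile $u_\lambda(x) = A\phi(\lambda x)$, with amplitude $A$ adjusted to preserve the constraint and $\lambda$ optimised, then yields simultaneously: strict negativity $I_\mu<0$; a sharp scaling asymptotic for $I_\mu$ in terms of a power of $\mu$ that will feed into estimate (ii); and the strict sub-additivity $I_{\mu_1+\mu_2} < I_{\mu_1} + I_{\mu_2}$ for all $\mu_1,\mu_2>0$.

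The core step is Lions' concentration--compactness lemma applied to $\tfrac{1}{2}u_n^2$ for a minimising sequence $\{u_n\}$. Vanishing is excluded because it would force the cubic term to tend to zero through an intermediate-$L^p$ bound, contradicting $I_\mu<0$; dichotomy is excluded by strict sub-additivity. Hence, after a sequence of translations, $u_n \to u$ strongly in $L^2$ and weakly in $H^{s/2}$; by weak lower semicontinuity of $\langle M\,\cdot,\cdot\rangle_{L^2}$ and continuity of the cubic functional along the extracted subsequence, $u \in \Sigma_\mu$ attains $I_\mu$. The Lagrange multiplier rule then produces $\nu\in\R$ with $Mu - N(u,u) = \nu u$. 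Subcriticality $\nu < m(0)$ follows by analysing this equation on the Fourier side, while smoothness $u \in H^{\infty}(\R)$ follows by iterating the identity $u = (M-\nu)^{-1}N(u,u)$, which gains regularity at each step because $m(\xi)\to\infty$ while $N$ loses only finitely many derivatives. Estimate (i) then follows from the coercive lower bound on $\mathcal{E}$ combined with Sobolev embedding after the smoothness bootstrap, and (ii) from the envelope identity $\nu = dI_\mu/d\mu$ combined with the scaling asymptotic for $I_\mu$ established in the previous step.

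The principal obstacle will be careful tracking of the derivative losses among the three components $r_1, r_2, r_3$ of the Coifman--Meyer symbol $n$, as indicated in the caption to \cref{fig:rsplot}: the admissible region in the $(s,r)$-plane, the strict sub-additivity of $I_\mu$, and the precise exponent $\beta$ all depend on how these losses are distributed against the $H^{s/2}$-regularity provided by $M$. A secondary technical point is that $H^{s/2}(\R)$ fails to embed into $L^\infty$ for $s<1$, so (i) cannot be read off directly from the variational bound; one must first bootstrap to $H^\infty$ via the equation before extracting the $L^\infty$ estimate.
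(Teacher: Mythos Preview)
Your outline follows the paper's strategy closely---constrained minimization of $\mathcal{E}$ over $\Sigma_\mu$, concentration--compactness, Lagrange multiplier, bootstrap---and is essentially correct. A few points where you diverge or gloss over difficulties are worth flagging.

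First, a small but real error: you write $I_\mu<0$, but the symbol $m$ is only assumed nonnegative, not $m(0)=0$. The long-wave ansatz in the paper yields $I_\mu < m(0)\mu$, and it is this inequality (together with $\mathcal{M}(u)\geq m(0)\mu$) that forces $\mathcal{N}(u_n)$ to stay bounded below along minimizing sequences. Your exclusion of vanishing relies on this lower bound for the cubic term, so the argument still goes through once corrected.

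Second, you pass quickly over dichotomy, but the paper identifies this as the \emph{main} technical obstacle: because both $M$ and $N$ are nonlocal, one must prove commutator-type estimates of the form $\int v\bigl(\rho_R N(u,u)-N(\rho_R u,u)\bigr)\,dx\to 0$ as $R\to\infty$, and this is where the $C^1$ regularity of the symbols $m$ and $n$ is actually used. Strict sub-additivity alone does not suffice; you need to show that $\mathcal{E}(u_n)$ nearly splits when $u_n$ does.

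Third, your route to estimate~(ii) via the envelope identity $\nu = dI_\mu/d\mu$ is \emph{not} what the paper does, and carries a gap: differentiability of $\mu\mapsto I_\mu$ is never established (and would be delicate given possible non-uniqueness of minimizers). The paper instead proves the two-sided bound $\mathcal{N}(u)\eqsim \mu^{1+\beta}$ directly---the upper bound via a high/low frequency splitting of $u$ combined with Gagliardo--Nirenberg, the lower bound from the refined estimate $I_\mu < m(0)\mu - \kappa\mu^{1+\beta}$---and then reads off $m(0)-\nu\eqsim\mu^\beta$ from the algebraic identity $2\nu\mu = 2\mathcal{M}(u)-3\mathcal{N}(u)$. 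This avoids any differentiability hypothesis on $I_\mu$.
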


The precise assumptions on the operators \(M\) and \(N\), including the parameter \(s'\), are given as \cref{assump:M,assump:N} below. Note that we show existence of smooth solitary waves of any size. The estimates for the size and wave speed of the solutions agree with \cite{maehlen2020} and are uniform in \(\mu\in (0,\mu_0)\) for any fixed \(\mu_0\). Thus, the implicit constants in \emph{(i), (ii)} may depend on \(\mu_0\), but \(\mu_0\) need not be chosen sufficiently small. 

\subsection{Method} To establish the existence of solitary waves, we insert the solitary-wave ansatz into \cref{eq:time_dependent} to obtain the steady equation
\begin{equation}
    -\nu u + Mu - N(u,u) = 0.
    \label{eq:solitary_wave}
\end{equation}
We can now formulate this equation as a variational problem, treating solitary-wave solutions as minimizers of the functional  
\begin{equation*}
    \mathcal{E}(u) = \underbrace{\frac{1}{2}\intR uMu\,dx}_{\eqqcolon \mathcal{M}(u)} - \underbrace{\frac{1}{3}\intR uN(u,u)\,dx}_{\eqqcolon \mathcal{N}(u)}
\end{equation*}
subject to the constraint that 
\begin{equation*}
    \mathcal{Q}(u)= \frac{1}{2}\intR u^2 \,dx = \mu.
\end{equation*}
The search for minimizers is carried out in the Sobolev space \(H^{\frac{s}{2}}(\R)\), leading to the constraint minimization problem
\begin{equation*}
    \Gamma_{\mu} = \inf \{\mathcal{E}(u)\colon u \in H^{\frac{s}{2}}(\R) \text{ and } \mathcal{Q}(u)= \mu \}.
\end{equation*}
The space \(H^{\frac{s}{2}}(\R)\) is natural; since \(M\) is of order \(s>0\), one has that \({(\mathcal{M}(u) + \mathcal{Q}(u))^{\frac{1}{2}}}\) is equivalent to the standard norm on \(H^{\frac{s}{2}}(\R)\). The sufficiently low order of \(N\) compared to \(M\) ensures that \(\mathcal{N}(u)\) is well-defined for \(u\in H^{\frac{s}{2}}(\R)\), and coercivity of  \(\mathcal{E}(u)\). According to the Lagrange multiplier principle a minimizer \(u \in H^{\frac{s}{2}}(\R)\) of \(\mathcal{E}\) then solves 
\[
-\nu \mathcal{Q}'(u) + \mathcal{E}'(u) = 0 
\]
for some \(\nu\). Since the derivatives of \(\mathcal{Q}\) and \(\mathcal{M}\) are given by \(u\) and \(Mu\), a minimizer \(u\) of \(\Gamma_{\mu}\) solves \cref{eq:solitary_wave} provided that \(\mathcal{N}'(u) = N(u,u)\). In \cref{sec:symmetry}, {we show that if there is a functional with Fréchet derivative \(N(u,u)\), then it must be \(\mathcal{N}(u)\)} and the symbol \(n\) of \(N\) {must be} cyclically symmetric in \(-\xi, \xi-\eta\) and \(\eta\). {By cyclic symmetry we mean that} 
\begin{equation}
    n(\xi-\eta, \eta) = n(\eta, -\xi) = n(-\xi, \xi-\eta).\label{eq:cyclic_symmetry}
\end{equation}
In particular, this excludes nonlinearities of the form \(uLu\), where \(L\) is a linear Fourier multiplier. For any function of three variables, \({f(\xi-\eta, \eta, -\xi)}\), a cyclically symmetric function is trivially given by the cyclic sum
\begin{equation*}
    \sum_{cyc} f(\xi-\eta, \eta, -\xi) = f(\xi-\eta, \eta, -\xi) + f(\eta, -\xi, \xi-\eta)+ f(-\xi, \xi-\eta, \eta).
\end{equation*}
Conversely, any cyclically symmetric function can be written as a cyclic sum of some function \(f(\xi-\eta, \eta, -\xi)\). Note that the variables are just cycled through; not all permutations are used. Similar symmetries arise in other works related to water waves \cite{ehrnstrom2019,ehrnstrom2022,germain2012,ionescu2018,bernicot2013,bernicot2014}.

Lions' concentration--compactness principle \cite{lions1984} is used to overcome non-compactness. We find a subsequence of any minimizing sequence that vanishes; dichotomizes; or concentrates to a solitary wave. The main challenge in our nonlocal equation is the exclusion of dichotomy: we need to show that \(M, N\) are in some sense not ``too" nonlocal, which naturally leads to regularity constraints on the symbols \(m\) and \(n\). Some additional difficulties arise from the bilinear operator \(N\), due partly to the possible negative order of \(N\), and partly to the bilinear structure.  The inclusion of \(N\) also gives rise to challenges when trying to exclude vanishing. This is in contrast to many other works, where the exclusion of vanishing is straightforward (see, e.g., \cite{maehlen2020,ehrnstrom2012,dinvay2021,nilsson2019,arnesen2016}). With the same strategy, one can show that \(\norm{u_n}_{L^{q^*}}\to 0\) if \(\{u_n\}_{n\in\N}\) vanishes if the minimizing sequence vanishes, where \(q^*\) has a sufficiently low value, but this is not enough to obtain a contradiction. 

\label{sec:assumptions_result}
To handle the theory, we assume that the symbol \(n\) is separable,
\begin{equation*}
n(\xi-\eta, \eta) = n_1(-\xi)n_2(\xi-\eta)n_3(\eta) + \text{ {cyclic permutations}},
\end{equation*} 
which means that \(N(u,u)\) is realized as a combination of linear Fourier multipliers, 
\begin{equation*}
    N(u,u) = L_1(L_2uL_3u) + \text{ {cyclic permutations.}}
\end{equation*}
The symmetric terms are added to ensure that \(n\) is cyclically symmetric as previously described. We relax this assumption below, but the cost is a stricter growth bound on \(n\). Using the Japanese bracket \(\jpb{\cdot}=(1+\cdot^2)^{1/2}\) and the standard notation that \(a\lesssim b\) means \(a\leq Cb\) for some constant \(C>0\) and \(a\simeq b\) means \(a\lesssim b\lesssim a\), these are our assumptions:
\begin{assumption}{M}\label{assump:M}
    The linear Fourier multiplier \(M\) has {non-negative}, real-valued, and even symbol \(m\in C^1(\R)\), satisfying the growth bounds
    \begin{equation*}
        \begin{split} 
    m(\xi) - m(0) &\simeq \abs{\xi}^s \,\qquad\,\text{for} \quad\abs{\xi} \geq 1,\\
    m(\xi) - m(0) &\simeq\abs{\xi}^{s'}  \qquad\text{for}\quad\abs{\xi} <1,\\
    \abs{\frac{\partial m}{\partial \xi}(\xi)} &\lesssim \jpb{\xi}^{s-1} \,\quad\text{for}\quad\xi\in\R,
        \end{split}
    \end{equation*}
    where \(s>0, \,\,s'>\frac{1}{2}\).
\end{assumption}
\begin{assumption}{N}\label{assump:N}
    The bilinear Fourier multiplier \(N\) has symbol \(n\in C^1(\R^2)\) of the form
    \begin{equation*}
        n(\xi-\eta, \eta) = \sum_{cyc} n_1({-}\xi)n_{2}(\xi-\eta)n_{3}(\eta).
    \end{equation*}
    {The symbol is even, that is invariant under the transform \((\xi, \eta)\mapsto (-\xi, -\eta)\).}
    The functions \(n_i\) are real-valued and satisfy
    \begin{align*}
        \abs{n_i(\xi)}&\lesssim \jpb{\xi}^{r_i} \quad\text{for}\quad\xi \in \R,\\
        \abs{\frac{\partial n_i(\xi)}{\partial \xi}}&\lesssim \jpb{\xi}^{r_i-1} \quad\text{for}\quad \xi \in \R,\\
        n_i(0)&\gtrsim 1,
    \end{align*}  
    where each \(r_i<\frac{s}{2}\), and
    \begin{equation*}
        \sum_{i=1}^3 r_i < s-\frac{1}{2},\qquad r_i + r_j< \frac{3s-1}{2} \quad\text{for}\quad i \neq j.
    \end{equation*}
\end{assumption}
If \(s>1\), two simple examples of \(n\) are \(n(\xi-\eta, \eta)= \jpb{\xi}^\frac{1}{6}\jpb{\xi-\eta}^{\frac{1}{6}}\jpb{\eta}^{\frac{1}{6}}\) and \(n(\xi-\eta, \eta) = \jpb{\xi}^{\frac{1}{2}} + \jpb{\xi-\eta}^{\frac{1}{2}} + \jpb{\eta}^{\frac{1}{2}}\). {See also \cref{sec:assumptions_discussion} below for a discussion of equations appearing in the literature.}
We prove \Cref{thm:main} under \cref{assump:M,assump:N} in \Cref{sec:properties,sec:functionals,sec:concentration_compactness}. In \Cref{sec:functionals,sec:concentration_compactness} we fix \(\mu>0\) and show existence of constrained minimizers  \(u\in H^{\infty}(\R)\) with \(\frac{1}{2}\norm{u}_{H^{\frac{s}{2}}}= \mu\). In \Cref{sec:properties}, we instead fix an upper bound \(\mu_0>0\) and show that \cref{thm:main} (i) and (ii) hold uniformly in \(\mu\in(0, \mu_0)\). 

\subsection{Discussion of assumptions {and related equations}}\label{sec:assumptions_discussion}
Our assumptions on the operator \(M\) are similar to those in \cite{maehlen2020}, but we allow for lower order, \(s>0\). The upper bound for the growth near zero allows us to find a sufficiently low bound for \(\Gamma_{\mu}\) in \Cref{lem:Gamma_bounds}; the lower bound near zero is only used to show properties of the solutions. The assumption on the derivative of \(m\) is to exclude dichotomy in \cref{lem:commutator}, and could be slightly weakened to \(\xi\mapsto m(\xi)/\jpb{\xi}^s\) being uniformly continuous, see \cite{maehlen2020}.

The main novelty lies in the symbol \(n\). Its inhomogeneity ensures that the contribution from the nonlinear part \(\abs{\mathcal{N}}\) is sufficiently big.  This is in turn important to show that \(\mu\mapsto\Gamma_{\mu}\) is subadditive, which is used in the concentration--compactness. 

The structure of \(n\) ensures that a variational formulation is possible and that \(N\) can be seen as a combination of linear Fourier multipliers. If \(L_1, L_2, L_3\) denote linear Fourier multipliers with symbols \(n_1(-\cdot), n_2(\cdot), n_3(\cdot)\) respectively, we have
\begin{equation*}
    \mathcal{N}(u) = \intR L_{1}uL_{2}uL_{3}u\,dx,\label{eq:funcform_B}
\end{equation*}
with linear orders \(r_1, r_2, r_3\). The upper bound on \(\sum_{i=1}^3 r_i\) gives a bound on \(\mathcal{N}\) in terms of \(\mathcal{M}\), ensuring lower boundedness of \(\Gamma_{\mu}\). Sobolev embedding as usual introduces an extra \(-1/2\), but our threshold is in line with the critical exponent below which the fractional KdV equation with \({n(\xi-\eta, \eta) = 1}, \sum_{i=1}^3 r_i = 0\) is no longer stable. The bound on the derivative of \(n\) is used both to exclude dichotomy and to ensure that the symbol \(n_i(\xi)/\jpb{\xi}^{r_i}\) defines a continuous operator on \(L^{p}(\R)\).

We also have an upper bound on the pairwise sums of \(n\). This is used to exclude vanishing as well as to show certain properties of the solution. It only comes into play when one of the \(r_i < -\frac{s}{2}\), and concerns the distribution of derivatives. The upper bounds on the sums of orders \(r_i\) are illustrated in \cref{fig:rsplot}.

{The symbols \(m, n\) are assumed to be even to avoid non-real solutions.} Finally, the assumptions that \(m\) is positive and \(n_i(0)\gtrsim 1\) are for convenience. By change of variables, it is not the signs, but the boundedness away from \(0\) that counts in both cases.

Our model is inspired by several nonlocal Boussinesq systems \cite{dinvay2022,hur2019,dinvay2021}, as well as the  {Babenko formulation \cite{babenko1987,constantin2016}} and bilinear Fourier multipliers appearing in the water wave equations \cite{bernicot2010,bernicot2013,bernicot2014,germain2012}.  It is worth noting that our theory is placed in the setting of an inhomogeneous operator on the nonlinear part (finite depth), and for operators of order in line with capillary-gravity water waves (but generally not valid for pure gravity waves). It is a goal of ours to expand the theory to cover pure gravity waves as well, as was done in \cite{ehrnstrom2012} for local nonlinearities. 

In \cite{dinvay2021}, a system is considered which, when reduced to traveling waves, is of the form here considered in all linear and quadratic parts. The extra cubic term appearing in that equation  (eq. 1.27) could probably be treated with inspiration from theory in \cite{marstrander}, but that would require a full analysis that we have not carried out in detail.
The Babenko formulation of water waves with surface tension and on finite depth involves non-polynomial nonlinearities \cite{buffoni2004}, which make the equation not directly amenable to our theory. For small solutions, they are comparable, and it is quite probable that one could relax some of our current assumptions. We give more details on the comparison of these equations with our assumptions in \cref{app:equations}.

\subsection{{An alternative assumption}}
We finish the overview of our results by presenting an alternative to \cref{assump:N}, which allows for a slightly more general form of pseudo-products.\\
\begin{assumption}{N^*}\label{assump:Nstar}
    The  bilinear Fourier multiplier \(N\) has real-valued symbol \(n\in C^{\infty}(\R^2)\)
    \begin{equation*}
        n(\xi-\eta, \eta) = \sum_{cyc} f({-}\xi, \xi-\eta, \eta),
    \end{equation*}
    Here, \(f\) is even in {\((\xi, \eta)\)} and satisfies
    \begin{align*}
       \abs{\partial^{\alpha}f(\xi, \xi-\eta, \eta)}&\lesssim \jpb{\xi}^{r_1}\jpb{\xi-\eta}^{r_2}\jpb{\eta}^{r_3} \text{ for all }\xi,\eta \in \R, \,\,\alpha\in \N_0^2,\\
        f(0, 0, 0)&\gtrsim 1,
    \end{align*}  
    where each \(r_i\leq \frac{s}{2}-\frac{1}{4}\), and
    \begin{align*}
        \sum_{i=1}^3 r_i &< s-\frac{5}{4},\\
        r_i + r_j &<\frac{3s}{2}-1 \text{ for } i\neq j.\\
    \end{align*}
\end{assumption}
The proof of \Cref{thm:main} under \cref{assump:Nstar} is carried out in \cref{sec:Bstar}. It is based on a recent result from \cite{grafakos2020}, more suitable to our case than the classical results of H\"ormander--Mikhlin and Coifman--Meyer \cite{hormander1960,coifman1978}. The growth restrictions on \(n\) under \cref{assump:Nstar} are chosen to easily reduce this case to the one covered by \cref{assump:N}. Stricter growth restrictions can always be traded for reduced regularity of \(n\), under the use of \Cref{prop:Grafakos} in \Cref{sec:Bstar}.

\section{Preliminaries}
\label{sec:functionals}
In this section, we establish properties of the functionals and minimization problem. We estimate the size of functionals and related norms for functions that are close to minimizing \(\Gamma_{\mu}\). 

\subsection{Functional-analytic setting}
Let \(\Sc(\R)\) be the Schwartz space of rapidly decaying smooth functions on \(\R\) and \(\Sc'(\R)\) its dual, the space of tempered distributions. Furthermore, let \(\F\) be the Fourier transform, defined by
\[
\F(f)(\xi) = \frac{1}{\sqrt{2\pi}}\int_{\R} f(x) \exp{(-i\xi x)}\,dx \quad \text{for }f \in \Sc(\R),
\] 
and extended by duality to \(\Sc'(\R)\). We write \(\F(f) = \hat{f}\). Define the operator \(\Lambda^{s}\colon \Sc'(\R) \to \Sc'(\R)\) by
\[
\widehat{\Lambda^{s} f}(\xi) = \jpb{\xi}^{s}\hat{f}(\xi).
\]
For \(t\in \R, 1<p<\infty\), the fractional Sobolev space, also called Bessel-potential spaces, are
\[
H^{t}_{p}(\R)= \{f\in \Sc'(\R)\colon \norm{f}_{H^{t}_{p}} = \norm{\Lambda^{t} f}_{L^{p}} <\infty\}.
\] 
We omit the subscript when \(p=2\) and write \(H^{s}(\R)\) for \(H^{s}_{2}(\R)\). While a linear Fourier multiplier is continuous on \(L^2(\R)\) as long as \({\norm{m}_{L^{\infty}}<\infty}\), the same is not true in general for \(L^p(\R)\). Different criteria guarantee continuity, one being that a linear Fourier multiplier is continuous from \(L^p(\R)\to L^p(\R), {1\leq p<\infty},\) if its symbol is continuously differentiable and {
\begin{equation*}
    \left|\partial_{\xi}^{\alpha} \sigma(\xi) \right| \lesssim \jpb{\xi}^{-\alpha}, \alpha = 0,1,
\end{equation*}
}see for example \cite[{Theorem 6.2.7}]{grafakos2014}. We define
\begin{equation*}
    H^{\infty}(\R)=\cap_{t\in \R} H^{t}(\R).
\end{equation*}

For \(t>0\), the homogeneous version of the fractional Sobolev spaces, \(\dot{H}^{t}(\R)\), are functions \(v\) with
\begin{equation*}
    \norm{v}_{\dot{H}^{t}}= \norm{\abs{\cdot}^t \hat{v}}_{L^2}<\infty.
\end{equation*}
In these spaces, we consider equivalence classes of functions, identifying those whose difference is a polynomial. Then \(\norm{v}_{\dot{H}^{t}} + \norm{v}_{L^2}\eqsim \norm{v}_{H^t}\) \cite{runst2011}.

\subsection{Bounds for the infimum \(\Gamma_{\mu}\)}
\label{sec:functionals_miniprob}
To bound \(\Gamma_{\mu}\) from below, the first step is to bound the nonlinear part \(\mathcal{N}(u)\) in terms of \(\mathcal{M}(u)\). {The reader may pay special attention to the  proof, as variations of this argument will be recurring throughout the paper. }
\begin{lemma}[Upper bound for \(\mathcal{N}(u)\)]
    Let \(u \in H^{\frac{s}{2}}(\R)\). 
    Then 
    \[
    |\mathcal{N}(u)|\lesssim \norm{u}_{L^{2}}^{3-\gamma}\norm{u}_{H^{\frac{s}{2}}}^{\gamma},
    \]
    for some \(\gamma<2.\)
    {Furthermore, for \(u,v,w \in H^{\frac{s}{2}}( \mathbb{R})\), there are \(q_{1}, q_{2}, q_{3}\geq 2\) such that \(H^{\frac{s}{2}}( \mathbb{R})\hookrightarrow H^{r_{i}}_{q_{i}}( \mathbb{R})\) and 
    \begin{equation*}
        \left|\int_{ \mathbb{R}}L_{1}uL_{2}vL_{3}w\, dx\right|\lesssim \norm{u}_{H^{r_1}_{q_1}}\norm{v}_{H^{r_2}_{q_2}}\norm{w}_{H^{r_3}_{q_3}}.
    \end{equation*}}
    \label{lem:B_upper}
\end{lemma}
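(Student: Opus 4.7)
The plan is to unfold $\mathcal{N}(u)$ into a single triple product using the separable structure of $N$, establish the trilinear bound via Hölder and a Mikhlin-type $L^{q_i}$ estimate on each $L_i$, and finally interpolate between $L^{2}$ and $H^{s/2}$ to obtain $\gamma<2$.

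First I would verify that, since each $n_i$ is real and even, each $L_i$ is self-adjoint on $L^{2}(\mathbb{R})$. Applying Parseval to each of the three cyclic terms in $N(u,u)$, and exploiting the symmetry of $\int L_{1}u\,L_{2}u\,L_{3}u\,dx$ under permutation of the operators, the three cyclic contributions coincide and give the clean identity $\mathcal{N}(u)=\int_{\mathbb{R}} L_{1}u\,L_{2}u\,L_{3}u\,dx$. With this in hand, the first bound of the lemma reduces to the trilinear one applied at $u=v=w$.

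For the trilinear statement, the idea is to apply Hölder's inequality with exponents $q_{1},q_{2},q_{3}\geq 2$ satisfying $\sum_{i}1/q_{i}=1$, producing three factors of the form $\norm{L_{i}\cdot}_{L^{q_{i}}}$. Factoring each operator as $L_{i}=\Lambda^{r_{i}}T_{i}$ with $T_{i}$ of symbol $n_{i}(\pm\xi)/\jpb{\xi}^{r_{i}}$, the pointwise bounds on $n_{i}$ and $n_{i}'$ in \cref{assump:N} translate into a Mikhlin-type condition on $T_{i}$, so the multiplier theorem cited in the preliminaries yields $T_{i}\colon L^{q_{i}}\to L^{q_{i}}$ boundedly and hence $\norm{L_{i}v}_{L^{q_{i}}}\lesssim\norm{v}_{H^{r_{i}}_{q_{i}}}$. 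I would then invoke the Sobolev embedding $H^{s/2}(\mathbb{R})\hookrightarrow H^{r_{i}}_{q_{i}}(\mathbb{R})$, which requires $1/q_{i}\geq\max\{0,\,r_{i}-(s-1)/2\}$ and $1/q_{i}\leq 1/2$.

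For the improved $L^{2}$--$H^{s/2}$ bound with $\gamma<2$, I would upgrade each embedding to a Gagliardo--Nirenberg-type interpolation
\[
\norm{u}_{H^{r_{i}}_{q_{i}}}\lesssim\norm{u}_{L^{2}}^{1-\theta_{i}}\,\norm{u}_{H^{s/2}}^{\theta_{i}},\qquad \theta_{i}=\tfrac{2r_{i}+2/q_{i}-1}{s},
\]
the exponent being dictated by scaling. Summing, $\gamma=\sum_{i}\theta_{i}=(2\sum_{i}r_{i}-1)/s$, which under \cref{assump:N} is strictly less than $2-2/s<2$. The main obstacle is the bookkeeping needed to select the $q_{i}$'s so that Hölder, Sobolev embedding and the Mikhlin bound are all simultaneously compatible: the lower bounds on $1/q_{i}$ sum to at most $\sum_{i}r_{i}-3(s-1)/2<1-s/2<1$ while the upper bounds sum to $3/2>1$, so the numerical restrictions on the $r_{i}$ in \cref{assump:N} are tuned exactly so that admissible $q_{i}$'s exist.
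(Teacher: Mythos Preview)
Your overall strategy---H\"older on the triple product, a Mikhlin-type bound to pass from $\norm{L_i\cdot}_{L^{q_i}}$ to $\norm{\cdot}_{H^{r_i}_{q_i}}$, then interpolation between $L^2$ and $H^{s/2}$---is exactly the paper's. But the interpolation step, as written, has a genuine gap.

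The Gagliardo--Nirenberg exponent (which by scaling should read $\theta_i=(2r_i+1-2/q_i)/s$, not what you wrote, though the typo is harmless for the summed value) must lie in $[0,1]$ for the inequality to hold. The constraint $\theta_i\geq 0$ forces $1/q_i\leq r_i+\tfrac12$, and since \cref{assump:N} places no lower bound on the individual $r_i$ (only upper bounds on $r_i$, on $\sum r_i$, and on pairwise sums), one can have $r_i\leq -\tfrac12$, which makes $\theta_i\geq 0$ incompatible with $q_i\geq 2$. Your feasibility check at the end covers the H\"older and Sobolev-embedding constraints but not this additional one. The paper circumvents the problem by first embedding $H^{\tau_i/2}(\R)\hookrightarrow H^{r_i}_{q_i}(\R)$ into an $L^2$-based intermediate space, bounding $\norm{u}_{H^{\tau_i/2}}\leq\norm{u}_{L^2}$ whenever $\tau_i\leq 0$ so that only the positive part $[\tau_i]^+$ enters $\gamma$, and then producing explicit choices of $(q_i,\tau_i)$ in two separate cases (split on whether $\min_i r_i\leq\tfrac{s-1}{2}$) to verify $\sum_i[\tau_i]^+<2s$. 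Your sketch effectively assumes all $\theta_i$ are automatically admissible, which amounts to the second of the paper's two cases; the first case and the passage through $[\tau_i]^+$ are what is missing.

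A minor point: you assert that each $n_i$ is even, but \cref{assump:N} only requires the full symbol $n$ to be even under $(\xi,\eta)\mapsto(-\xi,-\eta)$. The identity $\mathcal{N}(u)=\int_{\R}L_1u\,L_2u\,L_3u\,dx$ is nonetheless correct---it follows from the cyclic symmetry of the triple integral in $(-\xi,\xi-\eta,\eta)$, not from self-adjointness of the $L_i$.
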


\begin{proof}
    As discussed in the introduction, the separability of the symbol \(n\) allows us to write the functional \(\mathcal{N}\) as
    \begin{equation}
        \abs{\mathcal{N}(u)} = \abs{\int_{\R}L_1uL_2uL_3u \,dx}\label{step:Best_start}.
    \end{equation}
    By H\"older's inequality and using that {\(n_i(\xi)/\left\langle \xi \right\rangle ^{r_{i}}\) define continuous operators on \(L^{p}( \mathbb{R})\)},
    \begin{equation}
        \begin{split}
        \abs{\mathcal{N}(u)}\lesssim \norm{L_1u}_{L^{q_1}} \norm{L_2u}_{L^{q_2}} \norm{L_3u}_{L^{q_3}}\\
        \lesssim \norm{u}_{H^{r_1}_{q_1}}\norm{u}_{H^{r_2}_{q_2}}\norm{u}_{H^{r_3}_{q_3}},
        \end{split}
        \label{step:holder}
    \end{equation}
    where \(\sum_{i=1}^3 \frac{1}{q_i} = 1\). If \(q_i\geq 2\), Sobolev embedding yields
    \begin{equation}
        \norm{u}_{H^{r_i}_{q_i}}\lesssim \norm{u}_{H^{\frac{\tau_i}{2}}},\label{step:SobEmb}
    \end{equation}
    where \(\tau_i\) may be any number satisfying \(\tau_i\geq 2r_i\) and \(r_i + \frac{1}{2} \leq \frac{\tau_i}{2} + \frac{1}{q_i}\). If  \(\tau_i<0\), then clearly \(\norm{u}_{H^{\frac{\tau_i}{2}}}\leq \norm{u}_{L^2}\). Otherwise, we interpolate to obtain
    \begin{equation}
        \norm{u}_{H^{r_i}_{q_i}}\lesssim \norm{u}_{H^{\frac{\tau_i}{2}}} \leq \norm{u}_{L^2}^{1-\frac{\tau_i}{s}}\norm{u}_{H^{\frac{s}{2}}}^{\frac{\tau_i}{s}}.\label{step:interpolation}
    \end{equation}
    Clearly, this is only possible if \(\tau_i\leq s\). Assuming for the moment that this is the case, and combining the steps so far,
    \begin{equation*}
        \abs{\mathcal{N}(u)}\leq \norm{u}_{L^2}^{3-\frac{[\tau_1]^+ +[\tau_2]^+ +[\tau_3]^+}{s}} \norm{u}_{H^{\frac{s}{2}}}^{\frac{[\tau_1]^+ +[\tau_2]^+ +[\tau_3]^+}{s}},
    \end{equation*}
    where \([\tau_i]^+\) denotes the positive part of \(\tau_i\), i.e. \([\tau_i]^+ = \tau_i\) when \(\tau_i\geq0\) and \(0\) when \(\tau_i< 0\). If we can pick \(\tau_i\) such that
    \begin{equation*}
        \sum_{i=1}^3 [\tau_i]^+ < 2s,
    \end{equation*}
    we will have proven the lemma with \(\gamma = \frac{1}{s}(\sum_{i=1}^3 [\tau_i]^+)\).

    It is not immediately clear that one can pick \(q_i, \tau_i\) such that H\"older's inequality and the Sobolev embeddings hold, while at the same time ensuring that \(\tau_i\leq s\) and the sum \(\sum_{i=1}^3 [\tau_i]^+\) is sufficiently low. In the rest of the proof, we show that it is always possible given the assumptions on \(r_i\). Summarizing, we need to show that there are \(q_i, \tau_i\) satisfying
    \begin{gather}
        \quad q_i \geq 2, \quad r_i + \frac{1}{2} \leq \frac{\tau_i}{2} + \frac{1}{q_i} , \quad r_i\leq \frac{\tau_i}{2} \leq \frac{s}{2}  \quad\text{for } i= 1,2,3  \label{req:1}\\
        \intertext{and}
        \sum_{i=1}^3 \frac{1}{q_i} = 1,\quad \sum_{i=1}^3 [\tau_i]^+ < 2s.\label{req:2}
    \end{gather}
    
    Assume without loss of generality that \(r_1\leq r_2\leq r_3\). We split the proof into two cases. Assume first that \(r_1\leq \frac{s-1}{2}\). Let \(M>\max(2, \frac{2}{s-2r_2})\) and
    \begin{align*}
        &q_1 = M, &\tau_1 = 2r_1 + 1 - \frac{2}{M}\\
        &q_2 =\frac{2M}{M-2}, &\tau_2 = 2r_2 + \frac{2}{M},\quad\,\,\,\\
        &q_3 = 2, &\tau_3 = 2r_3.\qquad\quad\quad
   \end{align*}
   Now it is straightforward to verify that \cref{req:1} and the first part of \cref{req:2} are satisfied. The second part of \cref{req:2} is automatically satisfied if \(\tau_i\) is negative for one or more \(i\in\{1,2,3\}\) since each \(\tau_i < s\). If on the other hand \(\tau_1, \tau_2, \tau_3>0\), then 
   \begin{equation*}
       \sum_{i=1}^3 [\tau_i]^+ = \sum_{i=1}^3 \tau_i = 2r_1 + 1 - \frac{2}{M} + 2r_2 +  \frac{2}{M} + 2r_3 <2s. 
   \end{equation*}

    Now, assume that \(\frac{s-1}{2}<r_1\leq r_2 \leq r_3\). In this case, let
    \begin{align*}
        &q_1 = \frac{2}{\frac{s+1}{2} + r_1 - r_2 - r_3}, &\tau_1 = r_1 + r_2+ r_3 - \frac{s-1}{2},\\
        &q_2 = \frac{2}{\frac{s+1}{2} + r_2 - r_1 - r_3}, &\tau_2 = r_1 + r_2+ r_3 - \frac{s-1}{2},\\
        &q_3 = \frac{1}{r_3 - \frac{s-1}{2}}, &\tau_3 = s.\qquad\qquad\qquad\qquad\,\label{eq:qreqbig3}
    \end{align*}
    \Cref{req:1} is  clearly satisfied for \(i=3\). The denominators of \(q_1, q_2\) are positive and smaller than \(1\):
    \begin{equation*}
        \begin{split}
        \frac{2}{q_2}\geq \frac{2}{q_1} =\frac{s+1}{2} + r_1 - r_2 - r_3 \geq (r_1 - \frac{s-1}{2}) + (\frac{s}{2}- r_2)+ (\frac{s}{2}-r_3)>0,\\ 
        \frac{2}{q_1}\leq \frac{2}{q_2} = \frac{s+1}{2} + r_2 - r_1 - r_3 \leq (r_2 - r_3) + (\frac{s-1}{2}- r_1) + 1 <1,
        \end{split}    
    \end{equation*}
    and this ensures that \(q_i\geq 2\) for \(i=1,2\). Furthermore, \(r_i + \frac{1}{2} =\frac{\tau_i}{2} { + \frac{1}{q_{i}}}\) for \(i=1,2,\) and 
    \begin{align*}
        \tau_1 &= {\tau_2 = (r_1 -\frac{s-1}{2}) + (r_3- r_2) + 2r_2 \geq 2r_2 \geq 2r_1,}\\
        \tau_1&= \tau_2 {<} s-\frac{1}{2} - \frac{s}{2} + \frac{1}{2} = \frac{s}{2}.
    \end{align*}
    Thus \cref{req:1} is also satisfied for \(i=1,2\). We verify \cref{req:2} (as before, it suffices to check when \(\tau_i\geq 0\)):
    \begin{gather*}
        \sum_{i=1}^3 [\tau_i]^+ = \sum_{i=1}^3 \tau_i = 2(r_1 + r_2+ r_3) + 1 {<} 2s\\
        \intertext{and}
        \frac{1}{q_1} + \frac{1}{q_2} + \frac{1}{q_3} = r_3 - \frac{s-1}{2} + \frac{s+1}{2}- r_3= 1.
    \end{gather*}

    {
    To show the second estimate of the lemma, let \(u,v,w \in H^{\frac{s}{2}}( \mathbb{R})\). Again by H\"olders inequality and using that \(n_{i}(\xi)/\left\langle \xi \right\rangle^{r_{i}} \) define continuous operators on \(L^p( \mathbb{R})\), 
    \begin{equation*}
        \left|\int_{ \mathbb{R}}L_{1} u L_{2} v L_{3}w \, dx \right|\lesssim \norm{u}_{H^{r_1}_{q_1}}\norm{v}_{H^{r_2}_{q_2}}\norm{w}_{H^{r_3}_{q_3}},
    \end{equation*}
    where \(\sum_{i=1}^3 \frac{1}{q_i} = 1\). Making the same choices for \(q_{i}, \tau_{i}\) as in the argument above, we can ensure that \(q_{i}\geq 2\) and \(H^{\frac{s}{2}}( \mathbb{R})\hookrightarrow H^{\frac{\tau_{i}}{2}}( \mathbb{R})\hookrightarrow H^{r_{i}}_{q_{i}}( \mathbb{R})\).}
\end{proof}

To bound \(\Gamma_{\mu}\) from below, we need only apply the previous lemma. To find a satisfactorily low upper bound, we use a long-wave ansatz and the properties of the symbols \(m\) and \(n\) near \(0\). 

\begin{lemma}[Upper and lower bounds for \(\Gamma_{\mu}\)]
    The infimum \(\Gamma_{\mu}\) satisfies
    \begin{equation*}
        -\infty<\Gamma_{\mu}<m(0)\mu.
    \end{equation*}
    \label{lem:Gamma_bounds}
\end{lemma}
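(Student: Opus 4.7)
The lower bound is an immediate consequence of \Cref{lem:B_upper}. For any $u$ with $\mathcal{Q}(u)=\mu$, that lemma gives $|\mathcal{N}(u)|\lesssim \mu^{(3-\gamma)/2}\norm{u}_{H^{s/2}}^{\gamma}$ with $\gamma<2$, and since $\mathcal{M}(u)+\mathcal{Q}(u)\eqsim\norm{u}_{H^{s/2}}^{2}$, I would rewrite this as $|\mathcal{N}(u)|\lesssim \mu^{(3-\gamma)/2}(\mathcal{M}(u)+\mu)^{\gamma/2}$. Because $\gamma/2<1$, Young's inequality absorbs it into $\tfrac12\mathcal{M}(u)+C(\mu)$, leaving $\mathcal{E}(u)\geq \tfrac12\mathcal{M}(u)-C(\mu)\geq -C(\mu)$, which proves $\Gamma_{\mu}>-\infty$.

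For the strict upper bound, the plan is a long-wave ansatz. Fix $\phi\in\Sc(\R)$ with $\hat\phi$ real, even, non-negative, not identically zero and compactly supported near the origin, normalized so that $\norm{\phi}_{L^{2}}^{2}=2\mu$, and set $u_{\delta}(x)=\delta^{1/2}\phi(\delta x)$. Then $\widehat{u_{\delta}}(\xi)=\delta^{-1/2}\hat\phi(\xi/\delta)$ and $\mathcal{Q}(u_{\delta})=\mu$ for every $\delta>0$, so $u_{\delta}$ is admissible. A change of variable $\xi=\delta\eta$ in the Fourier representation of $\mathcal{M}$ gives
\[
\mathcal{M}(u_{\delta})-m(0)\mu=\tfrac{1}{2}\intR\bigl(m(\delta\eta)-m(0)\bigr)|\hat\phi(\eta)|^{2}\,d\eta,
\]
and once $\delta$ is small enough that $\delta\supp\hat\phi\subset(-1,1)$, the low-frequency behavior $m(\xi)-m(0)\eqsim|\xi|^{s'}$ from \Cref{assump:M} produces $\mathcal{M}(u_{\delta})-m(0)\mu\eqsim\delta^{s'}$. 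For $\mathcal{N}(u_{\delta})$ the same rescaling in the double Fourier integral defining $\int u_{\delta}N(u_{\delta},u_{\delta})\,dx$ pulls out an overall factor $\delta^{1/2}$ (three copies of $\hat{u}_{\delta}=\delta^{-1/2}\hat\phi(\cdot/\delta)$ against $d\xi\,d\eta=\delta^{2}d\xi'\,d\eta'$) and leaves an integrand involving $n(\delta(\xi'-\eta'),\delta\eta')$. By dominated convergence this integrand tends pointwise to $n(0,0)=3\prod_{i}n_{i}(0)\gtrsim 1$, and the limiting triple integral $\iint \hat\phi(-\xi')\hat\phi(\xi'-\eta')\hat\phi(\eta')\,d\xi'\,d\eta'$ reduces, by evenness of $\hat\phi$, to $\int\hat\phi\,(\hat\phi\ast\hat\phi)\,d\eta'>0$. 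Hence $\mathcal{N}(u_{\delta})\gtrsim \delta^{1/2}$ for all small $\delta$.

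The main obstacle is comparing these leading orders, and this is exactly where the assumption $s'>\tfrac{1}{2}$ in \Cref{assump:M} earns its keep: it guarantees $\delta^{s'}=o(\delta^{1/2})$ as $\delta\to 0^{+}$, so that
\[
\mathcal{E}(u_{\delta})-m(0)\mu\leq C\delta^{s'}-c\delta^{1/2}<0
\]
for all sufficiently small $\delta>0$. Taking the infimum then gives $\Gamma_{\mu}\leq \mathcal{E}(u_{\delta})<m(0)\mu$, as desired. Minor technical care will be needed to verify that the dominating function for dominated convergence is integrable, which follows from the polynomial growth bounds on $n_{i}$ in \Cref{assump:N} together with the Schwartz decay of $\hat\phi$; and to ensure the chosen $\phi$ indeed renders the cubic Fourier integral strictly positive, which the non-negativity of $\hat\phi$ achieves automatically.
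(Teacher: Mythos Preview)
Your approach is essentially the same as the paper's: the lower bound via \cref{lem:B_upper} and coercivity, the upper bound via a long-wave ansatz $u_\delta(x)=\delta^{1/2}\phi(\delta x)$, comparing the $\delta^{s'}$ scaling of $\mathcal{M}(u_\delta)-m(0)\mu$ against the $\delta^{1/2}$ scaling of $\mathcal{N}(u_\delta)$, and invoking $s'>\tfrac12$. The paper takes $\hat\phi$ merely Schwartz with $\hat\phi\gtrsim 1$ on $(-1,1)$, which forces it to split the $\mathcal{N}$-integral into near- and far-frequency regions and estimate the tails separately; your choice of compactly supported $\hat\phi$ is a genuine simplification, since then the rescaled Fourier variables stay in a fixed compact set and a direct dominated-convergence argument replaces the domain decomposition.

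One small point to tighten: non-negativity of $\hat\phi$ alone does \emph{not} automatically force $\iint\hat\phi(-\xi')\hat\phi(\xi'-\eta')\hat\phi(\eta')\,d\xi'd\eta'>0$. The integrand is non-negative, but it can vanish identically if no triple from $\supp\hat\phi$ sums to zero (e.g.\ $\supp\hat\phi\subset\{1\le|\xi|\le 1.1\}$). You should additionally require $\hat\phi>0$ on a neighborhood of the origin (as the paper does), or simply take $\hat\phi$ to be a standard bump positive on $(-1,1)$; then the triple $(0,0,0)$ lies in the interior of the support and strict positivity is immediate.
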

\begin{proof}
    We begin by showing the lower bound. Let \(u\in H^{\frac{s}{2}}(\R)\) with \({\mathcal{Q}(u) = \mu}\). By \cref{lem:B_upper} there is a \(\gamma<2\) and positive constants \(C_1, C_2\) such that
    \begin{align*}
    \mathcal{E}(u) &\geq \mathcal{M}(u)+ \mathcal{Q}(u) - \mathcal{Q}(u) - \abs{\mathcal{N}(u)}\\
    &> C_1\norm{u}_{H^{\frac{s}{2}}}^{2}- \mu - C_2\mu^{3 - \gamma} \norm{u}_{H^{\frac{s}{2}}}^{\gamma}\\
    &>-\infty
    \end{align*} 
    since the expression is positive as \(\norm{u}_{H^{\frac{s}{2}}}\to \infty\).

    To show the upper bound, it suffices to find  \(v\in H^{\frac{s}{2}}(\R)\) with \(\mathcal{Q}(v) = \mu\) such that 
    \begin{equation*}
        \mathcal{E}(v) < m(0)\mu.
    \end{equation*}
    To that end, pick \(\phi\in \Sc(\R)\) with \(\mathcal{Q}(\phi) = \mu, \hat{\phi}(\xi)\geq 0\) for all \(\xi\in\R\) and \(\hat{\phi}(\xi)\gtrsim 1\) for \(\abs{\xi}<1\).  Let \(0<t<1\) and define
    \begin{equation*}
    \phi_t(x) = \sqrt{t}\phi(tx).
    \end{equation*}  
    Then \(\mathcal{Q}(\phi_t) = \mu\) still. 

    We find an upper bound for \(\mathcal{M}(\phi_t)\): 
\begin{align*}
\mathcal{M}(\phi_t) = \frac{1}{2}\int_{\R} m(\xi) |\widehat{\phi_t}(\xi)|^2 \,d\xi= m(0)\mu + \frac{1}{2}\int_{\R} (m(t\xi)-m(0)) |\hat{\phi}(\xi)|^2 \,d\xi
\end{align*} 
If \(s<s'\), then \(\abs{m(t\xi) - m(0)} \lesssim |t\xi|^{s'}\) for all \(\xi \in \R\). On the other hand, if \(s\geq s'\), then \(\abs{m(t\xi) - m(0)} \lesssim |t\xi|^{s'} + |t\xi|^{s}\). Since \(t<1\) and \(\phi \in \Sc(\R)\), there must be a constant \(C_1>0\) depending on \(\phi, s'\) and \(s\) such that
\begin{equation}
\mathcal{M}(\phi_{t})
\leq m(0)\mu + C_1 t^{s'}.
\label{eq:L_bound}
\end{equation}

Next, we consider \(\mathcal{N}\). By properties of the Fourier transform,
\begin{equation*}
    \begin{split}
    \mathcal{N}(\phi_t) &= \int_{\R^2}\overline{\widehat{\phi_t}}(\xi)n_1({-}\xi)\widehat{\phi_t}(\xi-\eta)n_2(\xi-\eta)\widehat{\phi_t}(\eta)n_3(\eta)\,d\eta\,d\xi\\
    &=\int_{\R^2}t^{-\frac{3}{2}}\overline{\hat{\phi}}(\xi/t)n_1({-}\xi)\hat{\phi}((\xi-\eta)/t)n_2(\xi-\eta)\hat{\phi}(\eta/t)n_3(\eta)\,d\eta\,d\xi\\
    &= \int_{\R^2}t^{\frac{1}{2}}\overline{\hat{\phi}}(\xi)n_1({-}t\xi)\hat{\phi}(\xi-\eta)n_2(t(\xi-\eta))\hat{\phi}(\eta)n_3(t\eta)\,d\eta\,d\xi.
\end{split}
\end{equation*}
Since \(n_i(0) \gtrsim 1\) and \(n_i\) is continuous, there is a \(\delta>0\) such that \(n_i(\xi)\gtrsim 1\) for \(\abs{\xi}\leq \delta\). 
Thus \(n_1({-}\xi)n_2(\xi-\eta)n_3(\eta)\gtrsim 1\) if \(\abs{\xi}, \abs{\eta}\leq \delta/2\) (then \(\abs{\xi-\eta}\leq \delta\)). With this in mind, we split the domain of integration in the last integral into three,
\begin{equation*}
    \begin{split}
    A = \{(\xi, \eta)\in\R^2\colon \abs{\xi}\leq \frac{\delta}{2t}, \abs{\eta}\leq \frac{\delta}{2t}\},\\
    B = \{(\xi, \eta)\in\R^2\colon \abs{\xi}> \frac{\delta}{2t}, \abs{\eta}\leq \frac{\delta}{2t}\},\\
    C = \{(\xi, \eta)\in\R^2\colon \abs{\eta}> \frac{\delta}{2t}\},
    \end{split}
\end{equation*}
and consider each part separately. Since we want to find an upper bound for \(\mathcal{M}(u)-\mathcal{N}(u)\), we need a lower bound for \(\mathcal{N}\). In particular, we want to show that \(\mathcal{N}(u)\gtrsim t^{\frac{1}{2}}\). In domain \(A\), we know that \(n_1({-}t\xi)n_2(t(\xi-\eta))n_3(t\eta)\) is positive and we conclude that
\begin{equation*}
    \begin{split}
    \int_{\abs{\xi} \leq\frac{\delta}{2t}}\int_{\abs{\eta}\leq \frac{\delta}{2t}}&t^{\frac{1}{2}}\overline{\hat{\phi}}(\xi)n_1({-}t\xi)\hat{\phi}(\xi-\eta)n_2(t(\xi-\eta))\hat{\phi}(\eta)n_3(t\eta)\,d\eta\,d\xi\\
    &\gtrsim t^{\frac{1}{2}}\int_{\abs{\xi} \leq\frac{\delta}{2t}}\int_{\abs{\eta}\leq \frac{\delta}{2t}}\overline{\hat{\phi}}(\xi)\hat{\phi}(\xi-\eta)\hat{\phi}(\eta)\,d\eta\,d\xi\\
    &\gtrsim t^{\frac{1}{2}}.
\end{split}
\end{equation*}
Since the sign of \(n\) is unknown in \(B\) and \(C\), we show instead that the absolute value of the integrals over these domains approaches zero faster than \(t^{\frac{1}{2}}\) as \(t\to 0\). 
For domain \(C\), we find
\begin{equation*}
    \begin{split}
    &\left|\int_{\xi \in \R}t^{\frac{1}{2}}\int_{\abs{\eta}> \frac{\delta}{2t}}\overline{\hat{\phi}}(\xi)n_1({-}t\xi)\hat{\phi}(\xi-\eta)n_2(t(\xi-\eta))\hat{\phi}(\eta)n_3(t\eta)\,d\eta\,d\xi\right|\\
    &\qquad\quad\leq t^{\frac{1}{2}}\int_{\abs{\eta}> \frac{\delta}{2t}}\int_{\xi \in \R}\overline{\hat{\phi}}(\xi)\abs{n_1({-}t\xi)}\hat{\phi}(\xi-\eta)\abs{n_2(t(\xi-\eta))}\hat{\phi}(\eta)\abs{n_3(t\eta)}\,d\xi\,d\eta\\
    &\qquad\quad\lesssim t^{\frac{1}{2}}\int_{\abs{\eta}> \frac{\delta}{2t}}\left(\overline{\hat{\phi}}\jpb{\cdot}^{\abs{r_1}}\ast\overline{\hat{\phi}}\jpb{\cdot}^{\abs{r_2}}\right)(\eta)\hat{\phi}(\eta)\jpb{\eta}^{\abs{r_3}} \frac{\abs{\eta}}{\abs{\eta}}\,d\eta\\
    &\qquad\quad\lesssim t^{\frac{3}{2}} \int_{\abs{\eta}> \frac{\delta}{2t}}\left(\overline{\hat{\phi}}\jpb{\cdot}^{\abs{r_1}}\ast\overline{\hat{\phi}}\jpb{\cdot}^{\abs{r_2}}\right)(\eta)\hat{\phi}(\eta)\jpb{\eta}^{\abs{r_3}} \abs{\eta}\,d\eta\\
    &\qquad\quad\lesssim t^{\frac{3}{2}},
    \end{split}
\end{equation*}
where we used that \(\frac{1}{\abs{\eta}}\lesssim t\) in the domain and that \(\phi\in \Sc(\R)\). It is easy to see that a similar argument holds for the integral over domain \(B\) by observing that 
\begin{equation*}
    \begin{split}
    &\left|\int_{\abs{\xi}> \frac{\delta}{2t}}\int_{\abs{\eta}\leq \frac{\delta}{2t}}t^{\frac{1}{2}}\overline{\hat{\phi}}(\xi)n_1({-}t\xi)\hat{\phi}(\xi-\eta)n_2(t(\xi-\eta))\hat{\phi}(\eta)n_3(t\eta)\,d\eta\,d\xi\right|\\
    &\qquad\quad\leq t^{\frac{1}{2}}\int_{\abs{\xi}> \frac{\delta}{2t}}\int_{\eta\in\R}\abs{\overline{\hat{\phi}}(\xi)n_1({-}t\xi)\hat{\phi}(\xi-\eta)n_2(t(\xi-\eta))\hat{\phi}(\eta)n_3(t\eta)}\,d\eta\,d\xi.
    \end{split}
\end{equation*}
 Combining all these calculations,
\begin{equation}
    \begin{split}
    \mathcal{N}(\phi_t) &= \int_{A + B+ C}t^{\frac{1}{2}}\overline{\hat{\phi}}(\xi)n_1({-}t\xi)\hat{\phi}(\xi-\eta)n_2(t(\xi-\eta))\hat{\phi}(\eta)n_3(t\eta)\,d\eta\,d\xi\\
    &\gtrsim t^{\frac{1}{2}}-t^{\frac{3}{2}}\gtrsim t^{\frac{1}{2}}\label{eq:B_bound}.
\end{split}
\end{equation}
Combining \cref{eq:L_bound} and \cref{eq:B_bound}, then for constants \(C_1, C_2\), 
\begin{equation*}
\Gamma_{\mu}\leq \mathcal{E}(\phi_t)= \mathcal{M}(\phi_t) - \mathcal{N}(\phi_t)\leq m(0)\mu + C_1 t^{s'} - C_2 t^{\frac{1}{2}}<m(0) \mu
\end{equation*}
for \(t>0\) small enough since \(s'>\frac{1}{2}\) by assumption, which is what we wanted to show.
\end{proof}

\subsection{Estimates for near minimizers}
\label{sec:norm_estimates}
\Cref{lem:Gamma_bounds} implies that functions that are close to minimizing \(\Gamma_{\mu}\) satisfy
\begin{equation*}
    \mathcal{E}(u) < m(0)\mu.
\end{equation*}
Combining this with \cref{lem:B_upper} we can obtain certain bounds for the functionals \(\mathcal{M}, \mathcal{N}\) and associated norms. We rely on these bounds to exclude vanishing and dichotomy in the next section. 
\begin{lemma}
    Let \(\{u_n\}_{n\in\N}\) be a minimizing sequence for \(\Gamma_{\mu}\). There is a subsequence, again denoted by \(\{u_n\}_{n\in\N}\), \(\delta>0\) and some \(q^{*}\) satisfying \({q^* \in (2, \frac{2}{1-s})}\) if \(s<1\) or \(q^*>2\) otherwise, such that {
    \begin{equation*}
    \norm{u_n}_{H^{\frac{s}{2}}}^{-1},\, \mathcal{N}(u_n),\,  \norm{u_n}_{L^{q^{*}}} \geq \delta.
    \end{equation*}}\label{lem:normest}
\end{lemma}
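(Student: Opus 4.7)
I would prove the three bounds separately, passing to a common subsequence. The upper bound on $\norm{u_n}_{H^{\frac{s}{2}}}$ follows directly from the coercivity estimate already obtained in the proof of \cref{lem:Gamma_bounds}: one has $\mathcal{E}(u)\gtrsim \norm{u}_{H^{\frac{s}{2}}}^2-\mu-\mu^{3-\gamma}\norm{u}_{H^{\frac{s}{2}}}^\gamma$ for some $\gamma<2$, and since $\mathcal{E}(u_n)\to\Gamma_\mu$ is bounded, the quadratic dominant term forces $\sup_n\norm{u_n}_{H^{\frac{s}{2}}}<\infty$.

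For the lower bound on $\mathcal{N}(u_n)$, the non-negativity $m(\xi)\geq m(0)$ from \cref{assump:M} gives $\mathcal{M}(u_n)\geq m(0)\mu$, so that
\[
\mathcal{N}(u_n)=\mathcal{M}(u_n)-\mathcal{E}(u_n)\geq m(0)\mu-\mathcal{E}(u_n)\longrightarrow m(0)\mu-\Gamma_\mu>0,
\]
the limit being strictly positive by \cref{lem:Gamma_bounds}. Passing to a subsequence ensures $\mathcal{N}(u_n)\geq\tfrac{1}{2}(m(0)\mu-\Gamma_\mu)$ for every $n$.

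For the lower bound on $\norm{u_n}_{L^{q^*}}$, the goal is an estimate of the form
\[
|\mathcal{N}(u)|\lesssim \norm{u}_{L^{q^*}}^\alpha\,\norm{u}_{H^{\frac{s}{2}}}^\beta\,\norm{u}_{L^2}^\gamma
\]
with $\alpha>0$ and $\alpha+\beta+\gamma=3$, for some $q^*$ in the prescribed range. Combined with the two previous bounds and $\norm{u_n}_{L^2}^2=2\mu$, this forces $\norm{u_n}_{L^{q^*}}\gtrsim 1$. To obtain the estimate I would revisit the proof of \cref{lem:B_upper}, and in at least one of the interpolation steps $\norm{u}_{H^{\tau_i/2}}\lesssim\norm{u}_{L^2}^{1-\tau_i/s}\norm{u}_{H^{\frac{s}{2}}}^{\tau_i/s}$ replace $L^2$ by $L^{q^*}$ via the Gagliardo--Nirenberg inequality
\[
\norm{u}_{H^{\tau/2}}\lesssim\norm{u}_{L^{q^*}}^{1-\theta}\,\norm{u}_{H^{\frac{s}{2}}}^{\theta},
\]
which, by scaling in one spatial dimension, is valid for $\tau/2\in[\tfrac{1}{2}-\tfrac{1}{q^*},\tfrac{s}{2}]$ whenever $q^*<2/(1-s)$ (with no upper restriction on $q^*$ if $s\geq 1$). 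Choosing $q^*$ close to $2$ from above makes this range non-empty and, for appropriate choices of the $q_i$'s, contains at least one of the $\tau_i$'s produced in the original argument, so that $\alpha=1-\theta>0$. The main obstacle is precisely this last step: one must verify that $q^*$ can be selected in $(2,2/(1-s))$ while remaining compatible with the H\"older constraint $\sum 1/q_i=1$, the Sobolev embeddings $H^{\tau_i/2}\hookrightarrow H^{r_i}_{q_i}$, and the restriction $\tau_i\leq s$; this requires a separate check in each of the two sign regimes $r_1\leq(s-1)/2$ and $r_1>(s-1)/2$ appearing in the proof of \cref{lem:B_upper}.
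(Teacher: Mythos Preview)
Your first two bounds match the paper's proof verbatim. The issue is in the third step: the Gagliardo--Nirenberg inequality you invoke,
\[
\|u\|_{H^{\tau/2}}\lesssim\|u\|_{L^{q^*}}^{1-\theta}\,\|u\|_{H^{s/2}}^{\theta},
\]
is false in the range you need. Scaling is only a \emph{necessary} condition for such an inequality; one additionally needs the smoothness constraint $\tau/2\le\theta\cdot s/2$. A short computation with the scaling relation (write $A=1/q^*-1/2<0$, so $\theta=(\tau/2+A)/(s/2+A)$) shows that $\theta-\tau/s=A(s-\tau)/\bigl(s(s/2+A)\bigr)<0$ whenever $q^*>2$ and $\tau<s$; but $\tau<s$ is exactly what you require to get $\alpha=1-\theta>0$. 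Concretely, test against $u_N(x)=\phi(x)\cos(Nx)$ with $\phi$ Schwartz: then $\|u_N\|_{L^{q^*}}\sim 1$, $\|u_N\|_{H^{\tau/2}}\sim N^{\tau/2}$, $\|u_N\|_{H^{s/2}}\sim N^{s/2}$, and your inequality would force $N^{\tau/2}\lesssim N^{\theta s/2}$, which fails for large $N$.

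The paper avoids this by not introducing an independent $q^*$. Instead it sets $q^*=q_{i_0}$ equal to one of the H\"older exponents in $|\mathcal N(u)|\lesssim\prod_i\|u\|_{H^{r_i}_{q_i}}$, bounds the two remaining factors directly by $\|u\|_{H^{s/2}}$, and for the factor $i=i_0$ interpolates \emph{within the fixed-integrability Bessel scale}:
\[
\|u\|_{H^{r_{i_0}}_{q^*}}\lesssim\|u\|_{L^{q^*}}^{1-r_{i_0}/\theta}\,\|u\|_{H^{\theta}_{q^*}}^{r_{i_0}/\theta}\lesssim\|u\|_{L^{q^*}}^{1-r_{i_0}/\theta}\,\|u\|_{H^{s/2}}^{r_{i_0}/\theta},
\]
the last step being Sobolev embedding $H^{s/2}\hookrightarrow H^{\theta}_{q^*}$ for a suitable $\theta$ (when $r_{i_0}\le 0$ the interpolation is unnecessary and $\|u\|_{H^{r_{i_0}}_{q^*}}\le\|u\|_{L^{q^*}}$ directly). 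The case split you anticipated is then precisely the verification that $q_{i_0}$ can be chosen in $(2,2/(1-s))$: when $r_1\le (s-1)/2$ one takes $i_0=2$ with $q_2=2M/(M-2)$ arbitrarily close to $2$, while for $r_1>(s-1)/2$ one takes $i_0=3$ and introduces auxiliary parameters $a_1,a_2$, which is where the pairwise assumption $r_i+r_j<(3s-1)/2$ is used.
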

\begin{proof}
Showing the bound for \(\norm{u_n}_{H^{\frac{s}{2}}}\) and \(\mathcal{N}(u_n)\) is straightforward, whereas showing the result for \(\norm{u_n}_{L^{q^*}}\) is more technical. We split the proof into three parts. 

\emph{Bounding \(\norm{u_n}_{H^{\frac{s}{2}}}\) from above.} In light of the discussion at the beginning of the section, \cref{lem:B_upper,lem:Gamma_bounds} together imply that 
\begin{align*}
\norm{u_n}_{H^{\frac{s}{2}}}^{2} &\eqsim \mathcal{M}(u_n) + \mathcal{Q}(u_n)\\
&{=\mathcal{E}(u_n) +\mathcal{N}(u_n) + \mathcal{Q}(u_n)}\\
&\lesssim (m(0)+1)\mu + \mu^{(3-\gamma)/2}\norm{u_n}_{H^{\frac{s}{2}}}^{\gamma},
\end{align*}
for some \(\gamma <2\) and all but finitely many \(n\in \N\). Dividing both sides by \( \norm{u_n}_{H^{\frac{s}{2}}}^{\gamma}\) it is clear that \(\norm{u_n}_{H^{\frac{s}{2}}}\) must be bounded and we conclude that
\begin{equation*}
\norm{u_n}_{H^{\frac{s}{2}}}\leq 1/\delta_1,
\end{equation*}
provided \(\delta_1>0\) is small enough.

\emph{Bounding \(\mathcal{N}(u_n)\) from below.} {\sloppy{Observe first that \(\mathcal{M}\geq m(0)\mu\) since \({m(\xi)-m(0)\geq 0}\) everywhere.} Then \cref{lem:Gamma_bounds} implies that
\begin{equation*}
    \liminf_{n\to\infty}\mathcal{N}(u_{n}) = \liminf_{n\to\infty} (\mathcal{M}(u_{n})- \mathcal{E}(u_{n}))\geq m(0)\mu - \Gamma_{\mu}>0.
\end{equation*}
}

\emph{Bounding \(\norm{u_n}_{L^{q^*}}\) from below.}
{From \cref{lem:B_upper}, we know that we can choose \(q_{1},q_{2},q_{3}\) such that
    \begin{equation}
        \abs{\mathcal{N}(u)} \lesssim \norm{u}_{H^{r_1}_{q_1}}\norm{u}_{H^{r_2}_{q_2}}\norm{u}_{H^{r_3}_{q_3}} \lesssim \norm{u}_{H^{\frac{s}{2}}}^2\norm{u}_{H^{r_i}_{q_i}} \quad \text{for} \quad i = 1,2,3.\label{eq:LqStart}
    \end{equation}
We }assume \(r_1\leq r_2\leq r_3\) and split the proof into two cases. 
Suppose first that \(r_1\leq \frac{s-1}{2}\). {In that case, we can set \(q_2 = \frac{2M}{M-2}\) for any \(M>\max(2, \frac{2}{s-2r_2})\), see the proof of \cref{lem:B_upper}.}
If \(r_2\leq 0\), then \(\norm{u}_{H^{r_2}_{q_2}}\leq \norm{u}_{L^{q_2}}\). Otherwise, let \(\theta = \frac{s-1}{2} + \frac{M-2}{2M}\), so that the Sobolev embedding \(H^{\frac{s}{2}}(\R) \hookrightarrow H^{\theta}_{q_2}(\R)\) holds. Observe that \(\frac{r_2}{\theta}<1\) since \(M> \frac{2}{s-2r_2}\). Then 
\begin{equation*}
    \norm{u}_{H^{r_2}_{q_2}}\lesssim \norm{u}_{L^{q_2}}^{1-\frac{r_2}{\theta}}\norm{u}_{H^{\theta}_{q_2}}^{\frac{r_2}{\theta}} \lesssim  \norm{u}_{L^{q_2}}^{1-\frac{r_2}{\theta}}\norm{u}_{H^{\frac{s}{2}}}^{\frac{r_2}{\theta}}.
\end{equation*}
{Combining the cases \(r_2\leq 0\) and \(r_2>0\) and inserting it into \cref{eq:LqStart} with \(i=2\),}
\begin{equation*}
    \abs{\mathcal{N}(u)} \lesssim \norm{u}_{H^{r_1}_{q_1}}\norm{u}_{H^{r_2}_{q_2}}\norm{u}_{H^{r_3}_{q_3}}\lesssim \norm{u}_{H^{\frac{s}{2}}}^{2+[\frac{r_2}{\theta}]^+}\norm{u}_{L^{q_2}}^{1-[\frac{r_2}{\theta}]^+},
\end{equation*}
so that 
\begin{equation*}
    \norm{u}_{L^{q_2}} \gtrsim \left(\delta_1^{2 + [\frac{r_2}{\theta}]^+}\delta_2\right)^\frac{1}{1-[\frac{r_2}{\theta}]^+} =\delta_3>0.
\end{equation*}
By picking \(M\) large, the estimate holds for arbitrarily small \(q^* = q_2>2\).

Now, suppose \(\frac{s-1}{2}<r_1\leq r_2\leq r_3\). Let \(a_1, a_2\) be two constants satisfying \(0\leq a_i <\frac{s}{2}- r_i\). Let
\begin{equation}
    q_1 = \frac{1}{r_1 - \frac{s-1}{2} + a_1},\quad
    q_2 = \frac{1}{r_2 - \frac{s-1}{2} + a_2},\quad
    q_3 = \frac{1}{s-r_1-r_2- a_1 - a_2}. \label{eq:req_q3}
\end{equation}
{It is easy to verify that \(\sum_{i=1}^3 \frac{1}{q_i}=1\), and that \(H^{\frac{s}{2}}(\R) \hookrightarrow L^{q_i}(\R)\) for \(i=1,2\). This gives \cref{eq:LqStart} with \(i=3\)}.  
We will show that it is possible to pick \(a_1, a_2\) so that \(q_3\) satisfies the requirements on \(q^*\) and 
\begin{equation}
    \norm{u}_{H^{r_3}_{q_3}} \lesssim \norm{u}_{L^{q_3}}^{\theta}\norm{u}_{H^{\frac{s}{2}}}^{1-\theta},\label{q_3_req}
\end{equation}
for some \(\theta>0\). Then, it follows by the same argument as in the case \(r_1\leq\frac{s-1}{2}\) that 
\begin{equation*}
    \norm{u}_{L^{q_3}}\geq \delta_4, 
\end{equation*}
for some \(\delta_4>0\). 

To have \cref{q_3_req} for a \(q_3\), we need \(H^{\tilde{s}}(\R)\hookrightarrow H^{r_3}_{q_3}(\R)\) for some \(\tilde{s}<\frac{s}{2}\). If \(q_3\) is to satify the requirements on \(q^*\), we need also that \(q_3> 2\) and \(q_3<\frac{2}{1-s}\) if \(s< 1\). Setting up the inequalities that \(q_3, r_3, s\) must then satify and inserting the value of \(q_3\) from \cref{eq:req_q3} we are left with the following criteria
\begin{equation}
    \begin{split}
    r_3 + \frac{1}{2} < \frac{s}{2} + \frac{1}{q_3} &\implies a_1+ a_2 < \frac{3s-1}{2} - (r_1+ r_2 + r_3)\\
    \frac{1}{q_3} <\frac{1}{2} &\implies a_1 + a_2 > s - r_1 - r_2 - \frac{1}{2}\\
    q_3 <\frac{2}{1-s} &\implies a_1+ a_2 < \frac{3s-1}{2} - r_1 - r_2 \,(\text{if } s<1). \end{split}
    \label{areq1}
\end{equation}
Recall that we already required \(0\leq a_i<\frac{s}{2}- r_i, i = 1,2\). This leads to additional restrictions on \(a_1 + a_2\): 
\begin{equation}
    \begin{split}
    a_1+ a_2 &<s - r_1 - r_2,\\
    a_1 + a_2 &>0.
    \end{split}\label{areq2}
\end{equation}
It is possible to find \(a_1+ a_2\) satisfying \cref{areq1,areq2} as long as 
\begin{equation*}
    \max(0, s-r_1 - r_2 - \frac{1}{2})< \min(s-r_1-r_2, \frac{3s-1}{2} - (r_1 + r_2 + r_3), \frac{3s-1}{2} - r_1 - r_2),
\end{equation*}
where the last expression on the right-hand side is only required if \(s<1\). It follows immediately from \(r_i<\frac{s}{2}, \sum_{i=1}^3 r_i <s-\frac{1}{2}, s>0\) that \(s-r_1 - r_2 - \frac{1}{2}\) is smaller than any of the expressions on the right-hand side. The assumptions on \(r_i, s\) also imply that \(s-r_1 - r_2>0\) and \(\frac{3s-1}{2} - (r_1 + r_2 + r_3)>0\). Finally, \(\frac{3s-1}{2} - r_1- r_2>0\) since
\begin{equation*}
    r_1 + r_2 < \frac{3s-1}{2}
\end{equation*}  
by assumption. Thus it is possible to obtain the desired estimate also when \(\frac{s-1}{2}<r_1\) with \(q^* = q_3\).
\end{proof}

In the remainder of the paper, we assume that minimizing sequences satisfy these estimates, passing to a subsequence if necessary.

\section{Concentration--compactness}
\label{sec:concentration_compactness} 
In this section, we prove the first part of \cref{thm:main} with the help of the concentration--compactness principle stated below. 

\begin{theorem}[The concentration--compactness principle, {\cite{lions1984}}]
    \sloppy{Any sequence of non-negative functions \({\{\rho_n\}_{n\in\N} \subset L^1(\R)}\) satisfying}
    \begin{equation*}
    \int_{\R} \rho_n \,dx = \mu,
    \end{equation*}
    for some \(\mu>0\) and for all \(\,n\in\N\), admits a subsequence \(\{\rho_{n_k}\}_{k\in\N}\) that satisfies either:
    \begin{enumerate}[label=\normalfont(\roman*)]
    \item (Compactness) There exists a subsequence \(\{y_k\}_{k\in\N} \subset \R\) such that for every \(\varepsilon>0\), there exists \(r<\infty\) satisfying 
    \begin{equation*}
    \int_{y_k-r}^{y_k+r}\rho_{n_k}(x)\,dx \geq \mu -\varepsilon \quad \text{for all }\,k\in\N.
    \end{equation*}
    \item (Vanishing) For all \(r<\infty\), 
    \begin{equation*}
    \lim_{k\to\infty}\sup_{y\in\R}\int_{y-r}^{y+r}\rho_{n_k}(x)\,dx = 0.
    \end{equation*}
    \item (Dichotomy) There exists \(\bar{\mu} \in (0,\mu)\) such that for every \({\varepsilon>0}\) there is a \({k_0 \in\N}\) and two sequences of positive \(L^1\)-functions \(\{\rho_k^{(1)}\}_{k\in\N},\{\rho_k^{(2)}\}_{k\in\N}\) satisfying for all \(k\geq k_0\)
    \begin{align*}
    \norm{\rho_{n_k} - (\rho_k^{(1)}+ \rho_k^{(2)})}_{L^1(\R)}&\leq \varepsilon,\\
    |\int_{\R} \rho_k^{(1)} \,dx  - \bar{\mu}| &\leq \varepsilon, \\
    |\int_{\R} \rho_k^{(2)} \,dx  - (\mu-\bar{\mu})| &\leq \varepsilon,\\
    \dist(\supp(\rho_k^{(1)}), \supp(\rho_k^{(2)})) &\to \infty.
    \end{align*} 
    \end{enumerate}
    \label{thm:cc}
\end{theorem}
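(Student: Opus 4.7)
The plan is the classical one, based on the Lévy concentration function. For each \(n\) define
\begin{equation*}
    Q_n(t) = \sup_{y\in\R}\int_{y-t}^{y+t} \rho_n(x)\,dx, \qquad t\geq 0.
\end{equation*}
Each \(Q_n\) is non-decreasing in \(t\), takes values in \([0,\mu]\), is continuous, and satisfies \(Q_n(0)=0\) and \(\lim_{t\to\infty}Q_n(t)=\mu\). By Helly's selection theorem (applied on \([0,\infty)\) via a diagonal argument over a countable dense set followed by monotonicity) I would extract a subsequence, still denoted \(\{\rho_{n_k}\}\), such that \(Q_{n_k}\to Q\) pointwise for a non-decreasing limit \(Q\colon[0,\infty)\to[0,\mu]\). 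Set
\begin{equation*}
    \bar\mu=\lim_{t\to\infty}Q(t)\in[0,\mu].
\end{equation*}
The proof then splits into three cases according to the value of \(\bar\mu\), which correspond exactly to vanishing (\(\bar\mu=0\)), compactness (\(\bar\mu=\mu\)), and dichotomy (\(0<\bar\mu<\mu\)).

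The vanishing case is immediate: \(\bar\mu=0\) forces \(Q(t)=0\) for every \(t\), and by the pointwise convergence \(Q_{n_k}(t)\to 0\) as \(k\to\infty\) for each fixed \(r<\infty\), which is exactly the stated conclusion. The compactness case is nearly as easy: given \(\varepsilon>0\), pick \(t_\varepsilon\) with \(Q(t_\varepsilon)>\mu-\varepsilon/2\); then for \(k\) large there is \(y_k\in\R\) with
\begin{equation*}
    \int_{y_k-t_\varepsilon}^{y_k+t_\varepsilon}\rho_{n_k}(x)\,dx\geq Q_{n_k}(t_\varepsilon)-\varepsilon/2>\mu-\varepsilon.
\end{equation*}
A diagonal argument over a sequence \(\varepsilon_j\downarrow 0\) and a further subsequence, together with the total mass being \(\mu\), allows the same \(y_k\) to be used for every \(\varepsilon\) up to passing to a subsequence; this yields the conclusion with radius \(r=r(\varepsilon)\).

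The main obstacle is dichotomy. Here I would fix \(\varepsilon>0\), choose \(R=R(\varepsilon)\) so that \(Q(R)>\bar\mu-\varepsilon/2\), and then pick a sequence \(R_k\to\infty\) such that \(Q_{n_k}(R_k)\to\bar\mu\) while \(R_k/R\to\infty\). Select \(y_k\in\R\) with
\begin{equation*}
    \int_{y_k-R}^{y_k+R}\rho_{n_k}(x)\,dx\geq Q_{n_k}(R)-\varepsilon/4,\qquad \int_{y_k-R_k}^{y_k+R_k}\rho_{n_k}(x)\,dx\geq Q_{n_k}(R_k)-\varepsilon/4.
\end{equation*}
Define \(\rho_k^{(1)}=\rho_{n_k}\mathbf 1_{[y_k-R,\,y_k+R]}\) and \(\rho_k^{(2)}=\rho_{n_k}\mathbf 1_{\R\setminus[y_k-R_k,\,y_k+R_k]}\). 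By construction the distance between their supports is at least \(R_k-R\to\infty\), and the mass contained in the annulus \(R<|x-y_k|<R_k\) is bounded by
\begin{equation*}
    Q_{n_k}(R_k)-Q_{n_k}(R)+\varepsilon/2,
\end{equation*}
which tends to at most \(\varepsilon\) as \(k\to\infty\), giving the first norm bound. The mass estimates for \(\int\rho_k^{(1)}\) and \(\int\rho_k^{(2)}\) follow from \(Q_{n_k}(R)\to Q(R)\), \(Q_{n_k}(R_k)\to\bar\mu\), and \(\int\rho_{n_k}=\mu\). The technical subtlety is to choose the \emph{rate} of \(R_k\to\infty\) compatibly with the convergence \(Q_{n_k}\to Q\): this is handled by a standard diagonal extraction, choosing for each fixed \(k\) the value \(R_k\) large enough so that \(Q_{n_k}(R_k)\) lies within \(1/k\) of \(\bar\mu\). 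Iterating over a sequence \(\varepsilon_j\downarrow 0\) and diagonalising gives the conclusion as stated.
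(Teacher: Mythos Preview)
The paper does not prove this theorem at all: it is quoted verbatim as Lions' concentration--compactness principle with a citation to \cite{lions1984} and is then used as a black box in Section~3. Your proposal is essentially Lions' own argument via the L\'evy concentration function and is the standard route; it is correct in outline.

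Two small points worth tightening. In the compactness case, the reason a single sequence \(\{y_k\}\) works for all \(\varepsilon\) is the overlap argument: once some interval of half-length \(r\) carries more than \(\mu/2\) of the mass, any other such interval must intersect it, so near-optimal centres at different scales stay within bounded distance of one another; you gesture at this with ``diagonal argument'' but the actual mechanism is this mass-overlap rigidity. In the dichotomy case, you cannot in general choose \(y_k\) simultaneously near-optimal for \emph{both} radii \(R\) and \(R_k\); the correct (and sufficient) move is to pick \(y_k\) near-optimal for the small radius \(R\) and then simply use that \([y_k-R,y_k+R]\subset[y_k-R_k,y_k+R_k]\) together with the upper bound \(\int_{y_k-R_k}^{y_k+R_k}\rho_{n_k}\le Q_{n_k}(R_k)\). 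With those two clarifications your sketch is complete.
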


We will apply \cref{thm:cc} to the sequence \(\{\frac{1}{2}u_n^2\}_{n\in\N}\), where \(\{u_n\}_{n\in\N}\) is a minimizing sequence for \(\Gamma_{\mu}\) satisfying the estimates in \cref{lem:normest}.

\subsection{Excluding vanishing}
\label{sec:vanishing}
The next lemma is similar to results in \cite{maehlen2020, hildrum2020}, but we avoid using \(\norm{u}_{H^{\frac{s}{2}}}^2 \eqsim \mu\). Combined with \cref{lem:normest}, it allows us to exclude vanishing. 

\begin{lemma}
Let \(v \in H^{\frac{s}{2}}(\R)\) and assume that \(q\) satisfies \(q>2\) if \(s\geq 1\) and \(q\in(2,\frac{2}{1-s})\) if \(s<1\).  Given \(\delta>0\), suppose that \(\norm{v}_{H^{\frac{s}{2}}}^{-1}, \norm{v}_{L^{q}} \geq \delta\). Then there exists \(\varepsilon>0\) such that
\begin{equation*}
\sup_{j\in \Z} \int_{j-2}^{j+2} |v(x)|^{2}\,dx \geq \varepsilon. 
\end{equation*}
\label{lem:vanish}
\end{lemma}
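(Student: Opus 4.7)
The plan is a bounded-overlap covering argument combined with a local fractional Sobolev embedding. Cover $\R$ by the intervals $I_j = [j-2, j+2]$, $j \in \Z$, which have overlap at most $4$. By translation invariance, the local Sobolev embedding
\[\|w\|_{L^q(I_j)}\lesssim \|w\|_{H^{s/2}(I_j)}\]
holds with a constant independent of $j$, precisely in the range $q \in [2, 2/(1-s))$ for $s<1$ (or any $q\geq 2$ for $s\geq 1$). Using the Slobodeckij representation of the fractional seminorm and the fact that for $(x,y) \in I_j \times I_j$ at most finitely many $j$ are involved, one obtains the bounded-overlap estimate
\[\sum_{j \in \Z} \|v\|_{H^{s/2}(I_j)}^2 \lesssim \|v\|_{H^{s/2}(\R)}^2.\]

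The key step is the elementary splitting $|v|^q = |v|^{q-2}\cdot |v|^2$. Taking $L^q(I_j)$ norms and using the local embedding to bound the $L^q(I_j)^2$ factor by $\|v\|_{H^{s/2}(I_j)}^2$,
\[\|v\|_{L^q(I_j)}^q \leq \Bigl(\sup_{k\in\Z}\|v\|_{L^q(I_k)}\Bigr)^{q-2}\|v\|_{L^q(I_j)}^2 \lesssim \Bigl(\sup_{k}\|v\|_{L^q(I_k)}\Bigr)^{q-2}\|v\|_{H^{s/2}(I_j)}^2.\]
Summing over $j$ and using the bounded-overlap estimate together with $\|v\|_{L^q(\R)}^q \lesssim \sum_j \|v\|_{L^q(I_j)}^q$ yields
\[\|v\|_{L^q(\R)}^q \lesssim \Bigl(\sup_{k}\|v\|_{L^q(I_k)}\Bigr)^{q-2}\|v\|_{H^{s/2}(\R)}^2.\]
Inserting the two hypotheses $\|v\|_{L^q}\geq \delta$ and $\|v\|_{H^{s/2}}\leq 1/\delta$ gives $\sup_k \|v\|_{L^q(I_k)} \geq c_1$ for some $c_1 = c_1(\delta,q,s) > 0$, attained on some interval $I_{k^*}$.

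The final step converts the $L^q$-mass on $I_{k^*}$ into $L^2$-mass. The local Gagliardo--Nirenberg interpolation
\[\|v\|_{L^q(I_{k^*})} \lesssim \|v\|_{L^2(I_{k^*})}^{1-\theta}\|v\|_{H^{s/2}(I_{k^*})}^{\theta},\qquad \theta = \tfrac{q-2}{sq},\]
has $\theta < 1$ exactly in the permitted range of $q$. Combined with $\|v\|_{H^{s/2}(I_{k^*})} \leq 1/\delta$, this produces $\int_{k^*-2}^{k^*+2}|v|^2\,dx \geq \varepsilon$ for some $\varepsilon > 0$ depending only on $\delta, q, s$, so that $\sup_{j\in\Z}\int_{j-2}^{j+2}|v|^2\,dx \geq \varepsilon$.

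The main obstacle, in contrast with the analogous lemmas in \cite{maehlen2020, hildrum2020}, is that one cannot use a two-sided comparability $\|v\|_{H^{s/2}}^{2} \eqsim \mu$ here; only the upper bound $\|v\|_{H^{s/2}} \leq 1/\delta$ is available. The decomposition $|v|^q = |v|^{q-2}\cdot|v|^2$ is engineered precisely so that the $L^2$-factor is consumed by Sobolev and summation, leaving only the upper bound on $\|v\|_{H^{s/2}}$ to absorb. The only real technical checks are the uniformity of the local embedding constants under translation and the bounded-overlap summability of the fractional seminorm; both are straightforward from the Slobodeckij representation.
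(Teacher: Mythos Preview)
Your argument is correct and reaches the same conclusion as the paper, but the route differs in two places worth noting. The paper localizes with a smooth partition of unity $\zeta_j$ and works with the \emph{global} norms $\|\zeta_j v\|_{H^{s/2}(\R)}$; the overlap estimate $\sum_j\|\zeta_j v\|_{H^{s/2}}^2\lesssim\|v\|_{H^{s/2}}^2$ is then obtained by complex interpolation from the integer case rather than from the Slobodeckij form. Second, the paper extracts the $L^2$-mass in a single step: it embeds $\|\zeta_j v\|_{L^q}\lesssim\|\zeta_j v\|_{H^{\tilde s/2}}$ for some $\tilde s<s$, interpolates $\|\zeta_j v\|_{H^{\tilde s/2}}^q\le\|\zeta_j v\|_{L^2}^{\tau}\|\zeta_j v\|_{H^{s/2}}^{q-\tau}$, and sums (with a short case split on whether $q-\tau<2$). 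Your version instead first isolates $\sup_k\|v\|_{L^q(I_k)}$ via the elementary split $|v|^q=|v|^{q-2}|v|^2$, and only then converts to $L^2$-mass by a local Gagliardo--Nirenberg inequality. Your approach is arguably more elementary (no complex interpolation, no case split), at the cost of one extra step and of invoking the local fractional Sobolev machinery; the paper's approach is more self-contained in that all norms are global. One small caveat: your appeal to the Slobodeckij representation for the bounded-overlap estimate is immediate only for $0<s<2$; for larger $s$ you must include integer derivatives, though the argument is then routine.
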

\begin{proof}
{Let \(\zeta\colon \R \to [0,1]\) denote a smooth function such that \({\supp \zeta \subset [-2,2]}\) and \({\sum_{j\in \Z} \zeta(x-j) = 1}\) for all \(x\in\R\) and write \({\zeta(x-j) = \zeta_j(x)}\). We will show that 
\begin{equation*}
    \sup_{j\in \mathbb{Z}} \left\|\zeta_{j}v \right\|_{L^2}^2 \geq \varepsilon.
\end{equation*}
} First, we wish to establish the inequality 
\begin{equation}
\sum_{j\in\Z} \norm{\zeta_j v}_{H^{\frac{s}{2}}}^{2}\lesssim \norm{v}_{H^{\frac{s}{2}}}^{2} \quad \text{for all } v\in H^{\frac{s}{2}}(\R).
\label{eq:van_est1}
\end{equation}
Consider the operator \(T\colon v \to \{\zeta_j v\}_{j\in\Z}\).
Clearly, \(T\) is bounded from \(H^{n}(\R)\) to \(l^{2}(H^{n}(\R))\) for any \(n\in \N_0\) since at each \(x\) only finitely many \(\zeta_jv\), \(\zeta_j v^{(n)}\) are non-zero. This implies \cref{eq:van_est1} for all \(n\in\N_0\). \Cref{eq:van_est1} follows by complex interpolation for general \(s>0\). Again using that only finitely many \(\zeta_j\) are non-zero at any \(x\), we establish that
\begin{equation}
\norm{v}_{L^{q}}^{q}\eqsim \sum_{j\in \Z} \norm{\zeta_j v}_{L^{q}}^{q} \quad \text{for all } v\in L^{q}(\R).
\label{eq:van_est2}
\end{equation} 

By assumption \(\frac{s}{2} > \frac{1}{2} - \frac{1}{q}\). Hence there is an \(\tilde{s}\) satisfying \(\frac{1}{2}-\frac{1}{q}<\frac{\tilde{s}}{2}<\frac{s}{2}\) such that by the Sobolev embedding theorem, 
\begin{equation}
\norm{\zeta_j v}_{L^{q}}^{q} \lesssim \norm{\zeta_j v}_{H^{\tilde{s}/2}}^{q}\leq \norm{\zeta_j v}_{L^{2}}^{\tau} \norm{\zeta_j v}_{H^{\frac{s}{2}}}^{q-\tau}\quad \text{for all } v\in H^{\frac{s}{2}}(\R),
\label{eq:van_est3}
\end{equation}
where \(\tau =  q(1-\frac{\tilde{s}}{s}) >0.\) If \(q -\tau< 2\), then using \cref{eq:van_est1}
\begin{equation}
    \sum_{j\in\Z}\norm{\zeta_j v}_{L^{2}}^{\tau}\norm{\zeta_j v}_{H^{\frac{s}{2}}}^{q-\tau}\leq \sum_{j\in\Z}\norm{\zeta_j v}_{L^{2}}^{q-2}\norm{\zeta_j v}_{H^{\frac{s}{2}}}^{2} \lesssim \sup_{j\in\Z}\norm{\zeta_j v}_{L^{2}}^{q-2}\norm{v}_{H^{\frac{s}{2}}}^{2}.\label{eq:van_est4}
\end{equation}
If on the other hand \(q-\tau\geq 2\), then 
\begin{equation*}
    \sum_{j\in\Z}\norm{\zeta_j v}_{H^{\frac{s}{2}}}^{q-\tau}\leq \sup_{j\in\Z}\norm{\zeta_j v}_{H^{\frac{s}{2}}}^{q-\tau-2} \left(\sum_{j\in\Z}\norm{\zeta_j v}_{H^{\frac{s}{2}}}^{2}\right) \lesssim \norm{v}_{H^{\frac{s}{2}}}^{q-\tau},
\end{equation*}
so that 
\begin{equation}
    \sum_{j\in\Z}\norm{\zeta_j v}_{L^{2}}^{\tau}\norm{\zeta_j v}_{H^{\frac{s}{2}}}^{q-\tau} \lesssim \sup_{j\in\Z}\norm{\zeta_j v}_{L^{2}}^{\tau}\norm{v}_{H^{\frac{s}{2}}}^{q-\tau}
    \label{eq:van_est5}
\end{equation}
In any case, combining \cref{eq:van_est4} or \cref{eq:van_est5} with \cref{eq:van_est2,eq:van_est3} gives  
\begin{equation*}
\norm{v}_{L^{q}}^{q} \lesssim \max(\sup_{j\in\Z}\norm{\zeta_j v}_{L^{2}}^{q-2} \norm{v}_{H^{\frac{s}{2}}}^{2},\sup_{j\in\Z}\norm{\zeta_j v}_{L^{2}}^{\tau}\norm{v}_{H^{\frac{s}{2}}}^{q-\tau}).
\end{equation*}
In light of \(\tau>0, q>2\) and \(\norm{v}_{H^{\frac{s}{2}}}^{-1}, \norm{v}_{L^{q}}\geq \delta\), this must mean that
\begin{equation*}
\sup_{j\in \Z} \int_{j-2}^{j+2} |v(x)|^{2}\,dx \geq \sup_{j\in \Z} \norm{\zeta_j v}_{L^{2}}^{2}\geq\varepsilon
\end{equation*}
for some \(\varepsilon >0\), which concludes the proof.
\end{proof}

\begin{lemma}
Vanishing does not occur.
\label{lem:vanish_result}
\end{lemma}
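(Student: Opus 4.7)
The plan is essentially to chain together the two preparatory results, \cref{lem:normest} and \cref{lem:vanish}, to derive a contradiction with the vanishing alternative of \cref{thm:cc}. Since we apply concentration--compactness to $\rho_n = \tfrac{1}{2}u_n^2$, vanishing would mean that for every $r<\infty$,
\[
\lim_{n\to\infty}\sup_{y\in\R}\int_{y-r}^{y+r}u_n(x)^2\,dx = 0.
\]
In particular, taking $r=2$ and noting that the supremum over $y\in\R$ dominates the supremum over integer translates, one obtains $\sup_{j\in\Z}\int_{j-2}^{j+2}u_n(x)^2\,dx\to 0$.

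I would then invoke \cref{lem:normest}, which provides (after passage to a subsequence) a uniform constant $\delta>0$ and an exponent $q^*$ lying in the admissible range of \cref{lem:vanish} (that is, $q^*>2$ if $s\geq 1$ and $q^*\in(2,2/(1-s))$ if $s<1$) such that
\[
\norm{u_n}_{H^{s/2}}^{-1},\ \norm{u_n}_{L^{q^*}} \geq \delta
\]
for all $n$. These are exactly the hypotheses of \cref{lem:vanish}, applied with $v=u_n$ and $q=q^*$. Thus there exists $\varepsilon>0$, independent of $n$, with
\[
\sup_{j\in\Z}\int_{j-2}^{j+2}u_n(x)^2\,dx \geq \varepsilon \quad\text{for every } n\in\N.
\]
This directly contradicts the limit obtained under the vanishing assumption, so vanishing is impossible.

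The proof is therefore a short assembly of the two preceding lemmas; there is no genuine obstacle remaining at this step, the work having been done in \cref{lem:normest} (which secured an $L^{q^*}$ lower bound at a subcritical exponent, exploiting the bound $\mathcal{N}(u_n)\gtrsim 1$ coming from $\mathcal{E}(u_n)<m(0)\mu$) and in \cref{lem:vanish} (the partition-of-unity and Sobolev-interpolation argument tying the global $L^{q^*}$-norm to local $L^2$-mass). The main subtlety to double-check when writing up is simply that the $q^*$ produced by \cref{lem:normest} lies in the range that \cref{lem:vanish} requires, which is immediate from the statements of both lemmas.
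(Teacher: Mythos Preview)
Your proposal is correct and follows exactly the paper's approach: invoke \cref{lem:normest} to obtain the uniform bounds on $\norm{u_n}_{H^{s/2}}^{-1}$ and $\norm{u_n}_{L^{q^*}}$, then apply \cref{lem:vanish} with $q=q^*$ to produce a uniform lower bound on the local $L^2$-mass that contradicts the vanishing alternative. The paper's own proof is the same two-line assembly of these lemmas, with less of the detail you have spelled out.
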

\begin{proof}
Let \(\delta>0\) and \(q = q^*\) be as in \cref{lem:normest}. Then \cref{lem:vanish} applied to \(\{u_n\}_{n\in\N}\) implies that vanishing does not occur.
\end{proof}

\subsection{Excluding dichotomy}
\label{sec:dichotomy}
A scaling argument shows that \(\mu\mapsto\Gamma_{\mu}\) is strictly sub-additive. 
\begin{lemma}[Sub-additivity]
\(\Gamma_{\mu}\) is strictly sub-additive:
\begin{equation*}
\Gamma_{\mu_1 + \mu_2} <\Gamma_{\mu_1} + \Gamma_{\mu_2}.
\end{equation*}
\label{lem:subadditivity}
\end{lemma}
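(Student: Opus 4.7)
The plan is to prove the auxiliary inequality $\Gamma_{t\mu} < t\Gamma_\mu$ for every $t > 1$ by an amplitude-rescaling argument, and then deduce strict subadditivity from it by elementary algebra. The essence is that $\mathcal{N}$ scales as the cube while $\mathcal{M}$ scales only as the square of the amplitude, so increasing the $L^2$-mass lowers the energy more than linearly.

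For the auxiliary inequality, consider the rescaling $u \mapsto \sqrt{t}\,u$, which sends $\mathcal{Q}(u) = \mu$ to $\mathcal{Q}(\sqrt{t}u) = t\mu$. By bilinearity of $M$ and trilinearity of the integrand in $\mathcal{N}$, I compute
\begin{equation*}
    \mathcal{E}(\sqrt{t}\,u) = t\,\mathcal{M}(u) - t^{3/2}\mathcal{N}(u) = t\,\mathcal{E}(u) + t(1-\sqrt{t})\,\mathcal{N}(u).
\end{equation*}
Let $\{u_n\}_{n\in\N}$ be a minimizing sequence for $\Gamma_\mu$ satisfying $\mathcal{N}(u_n) \geq \delta > 0$ as guaranteed by \cref{lem:normest}. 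Since $t(1-\sqrt{t}) < 0$ for $t > 1$, testing $\Gamma_{t\mu}$ against $\sqrt{t}\,u_n$ gives
\begin{equation*}
    \Gamma_{t\mu} \leq \mathcal{E}(\sqrt{t}\,u_n) \leq t\,\mathcal{E}(u_n) + t(1-\sqrt{t})\,\delta,
\end{equation*}
and passing to the limit $n \to \infty$ yields $\Gamma_{t\mu} \leq t\Gamma_\mu + t(1-\sqrt{t})\delta < t\Gamma_\mu$.

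To conclude the lemma, assume without loss of generality that $0 < \mu_1 \leq \mu_2$. Applying the auxiliary inequality with $\mu = \mu_2$ and $t = (\mu_1+\mu_2)/\mu_2 > 1$ gives
\begin{equation*}
    \Gamma_{\mu_1 + \mu_2} < \Gamma_{\mu_2} + \frac{\mu_1}{\mu_2}\,\Gamma_{\mu_2}.
\end{equation*}
If $\mu_1 = \mu_2$, the right-hand side is exactly $\Gamma_{\mu_1} + \Gamma_{\mu_2}$ and we are done. If $\mu_1 < \mu_2$, a second application of the auxiliary inequality with $\mu = \mu_1$ and $t = \mu_2/\mu_1 > 1$ gives $\Gamma_{\mu_2} < (\mu_2/\mu_1)\,\Gamma_{\mu_1}$, hence $(\mu_1/\mu_2)\,\Gamma_{\mu_2} < \Gamma_{\mu_1}$, which combined with the previous display yields the strict subadditivity.

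I expect no real obstacle: the only nontrivial input is the positivity $\mathcal{N}(u_n) \geq \delta > 0$ along a minimizing sequence, which is exactly what \cref{lem:normest} supplies. The argument is the standard Weinstein-type scaling trick; the sign of $t(1-\sqrt{t})$ for $t > 1$ together with the uniform lower bound on $\mathcal{N}$ does all the work.
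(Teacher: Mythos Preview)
Your proof is correct and essentially identical to the paper's: both establish the sub-homogeneity $\Gamma_{t\mu} < t\Gamma_\mu$ for $t>1$ via the rescaling $u \mapsto \sqrt{t}\,u$ together with the lower bound $\mathcal{N}(u_n)\geq\delta$ from \cref{lem:normest}, and then deduce strict subadditivity by the same elementary algebra. The only cosmetic difference is the order in which you apply the two instances of sub-homogeneity in the case $\mu_1<\mu_2$.
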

\begin{proof}
Let \(t>1\). Let \(\{u_n\}_{n\in\N}\) be a minimizing sequence and define a new sequence \(\{\tilde{u}_n\}_{n\in\N}\) by \(\tilde{u}_n = \sqrt{t}u_n\), such that \(\mathcal{Q}(\tilde{u}_n) = tq\). We find that
\begin{align*}
\Gamma_{t\mu}&= {\liminf_{n\to\infty} (t\mathcal{M}(u_n) -t^{3/2}\mathcal{N}(u_n))}\\
&{\,\,=\liminf_{n\to\infty} (t\mathcal{E}(u_n) + (t -t^{\frac{3}{2}})\mathcal{N}(u_n))}\\
&\leq t\Gamma_{\mu} + (t-t^{\frac{3}{2}})\delta\\
&< t\Gamma_{\mu}.
\end{align*} 
In the second to last line we used that \(\mathcal{N}(u_n)\geq \delta>0\), see \cref{lem:normest}. Thus \(\mu\mapsto \Gamma_{\mu}\) is sub-homogeneous, and we proceed to show that this implies sub-additivity. Suppose first that \(\mu_1 = \mu_2\). Then the result is immediate using the established sub-homogeneity with \(t=2\). 
If on the other hand \(\mu_1 \neq \mu_2\), assume without loss of generality that \(\mu_1 <\mu_2\). Then for some \(t>1\), we can write \(\mu_2 = t\mu_1\) and hence
\begin{equation*}
\Gamma_{\mu_1 + \mu_2} = \Gamma_{\mu_2(1 + \frac{1}{t})} < (1+\frac{1}{t}) \Gamma_{\mu_2}= \Gamma_{\mu_2} + \frac{1}{t} \Gamma_{t\mu_1} { <\Gamma_{\mu_1} + \Gamma_{\mu_2}}.
\end{equation*}
\end{proof}

The key to excluding dichotomy is to show that if dichotomy occurs and \(\{u_n\}_{n\in\N}\) decomposes, then \(\{\mathcal{E}(u_n)\}_{n\in \N}\) eventually also decomposes: 
\begin{equation*}
    \Gamma_{\mu} = \lim_{n\to \infty} \mathcal{E}(u_n) =  \lim_{n\to\infty} (\mathcal{E}(u_n^{(1)}) + \mathcal{E}(u_n^{(2)})),
\end{equation*}
as this contradicts the sub-additivity just shown. Since \(M, N\) are nonlocal operators, this is not automatic. However, the regularity of \(m\) and \(n\) ensure that \(M, N\) are in some sense not "too" nonlocal. The precise statement is given in \cref{lem:commutator}. The proof follows closely \cite{maehlen2020} for \(M\), but becomes more technical for \(N\) due to an extra convolution and the possibility that \(r\leq 0\).
\begin{lemma}
    \label{lem:commutator}
    Let \(u, v \in H^{\frac{s}{2}}(\R)\) and let \(\rho\in \Sc(\R)\) be a non-negative Schwartz function. Define \(\rho_R(x) = \rho(x/R)\). The operator \(M\) satisfies
    \begin{align}
    \lim_{R\to\infty}|\int_{\R}v(\rho_R M u - M(\rho_R u))\,dx| \to 0,\label{eq:com_L1}\\
    \lim_{R\to\infty}|\int_{\R} v ((1-\rho_R) M u - M((1-\rho_R) u))\,dx| \to 0.\label{eq:com_L2}\\
    \intertext{Similarly, \(N\) satisfies}
    \lim_{R\to\infty}\left|\int_{\R}v(\rho_{R}N(u,u) - N(\rho_{R}u, u))\,dx\right| \to 0, \label{eq:com_B1}\\
    \lim_{R\to\infty}\left|\int_{\R}v((1-\rho_{R})N(u,u) - N((1-\rho_{R})u, u))\,dx\right| \to 0. \label{eq:com_B2}
    \end{align}
\end{lemma}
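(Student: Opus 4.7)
The plan is to work on the Fourier side, where each of these commutators has a kernel that concentrates on a vanishingly small region as \(R\to\infty\), thanks to the scaling \(\hat\rho_R(\zeta) = R\hat\rho(R\zeta)\). Two preliminary observations halve the work. First, for any linear operator \(T\) one has \((1-\rho_R)T - T(1-\rho_R) = -(\rho_R T - T\rho_R)\); the same identity applies to the map \(w\mapsto N(w,u)\), which is linear in its first slot, so \cref{eq:com_L2,eq:com_B2} follow at once from \cref{eq:com_L1,eq:com_B1}. Second, the separable structure of \(N\) permits the decomposition
\[
\rho_R N(u,u) - N(\rho_R u,u) = \sum_{cyc}\Bigl([\rho_R,L_1](L_2 u\cdot L_3 u) + L_1\bigl([\rho_R,L_2]u\cdot L_3 u\bigr)\Bigr),
\]
so that \cref{eq:com_B1} reduces to linear commutator estimates of the type in \cref{eq:com_L1}, applied to different pairs of test functions.

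For \cref{eq:com_L1}, Plancherel (using that \(v\) is real) and the substitution \(\zeta = \xi-\eta\) give
\[
\int_{\R} v\,[\rho_R, M]u\,dx = \frac{1}{\sqrt{2\pi}}\iint_{\R^2}\overline{\hat v(\xi)}\,\hat\rho_R(\zeta)\bigl(m(\xi-\zeta)-m(\xi)\bigr)\hat u(\xi-\zeta)\,d\zeta\,d\xi.
\]
The mean-value theorem together with \(\abs{m'(\zeta)}\lesssim\jpb{\zeta}^{s-1}\) yields \(\abs{m(\xi-\zeta)-m(\xi)}\lesssim\abs{\zeta}\sup_{t\in[0,1]}\jpb{\xi-t\zeta}^{s-1}\), and Peetre's inequality distributes this weight between \(\hat v\) and \(\hat u\) so that, after Cauchy--Schwarz, it is absorbed into \(\norm{v}_{H^{s/2}}\norm{u}_{H^{s/2}}\) up to a surplus factor \(\jpb{\zeta}^{c}\) with \(c=\abs{s-1}\). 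The remaining \(\zeta\)-integral \(\int\abs{\hat\rho_R(\zeta)}\abs{\zeta}\jpb{\zeta}^{c}\,d\zeta\), after the substitution \(\omega=R\zeta\), equals \(R^{-1}\int\abs{\hat\rho(\omega)}\abs{\omega}\jpb{\omega/R}^{c}\,d\omega\), which is \(o(1)\) as \(R\to\infty\) by the Schwartz decay of \(\hat\rho\) and dominated convergence.

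For \cref{eq:com_B1}, each term in the above decomposition is a linear commutator \([\rho_R, L_i]\) applied either to \(u\) or to a pointwise product \(L_j u\cdot L_k u\). In the first case the argument for \cref{eq:com_L1} transfers verbatim, with \(m\) replaced by the appropriate reflection of \(n_i\) and the bound \(\abs{n_i'(\xi)}\lesssim\jpb{\xi}^{r_i-1}\) in place of the corresponding one for \(m\). In the second case we move \(L_1\) onto \(v\) through its \(L^{2}\)-adjoint and estimate the product \(L_j u\cdot L_k u\) in \(L^{2}\) (or in a slightly weighted \(L^{2}\) when needed), an estimate of the same flavour as those already used in \cref{lem:B_upper}. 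In both subcases the vanishing \(R^{-1}\)-gain from the \(\zeta\)-integral is preserved.

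The main technical obstacle is the bookkeeping for \cref{eq:com_B1}: several \(r_i\) may be negative or larger than \(1\), and the Peetre decomposition of \(\jpb{\xi-t\zeta}^{r_i-1}\) into powers of \(\jpb{\xi}\), \(\jpb{\xi-\zeta}\) and \(\jpb{\zeta}\) must be chosen so that the \(\hat v\)-factor and the two \(\hat u\)-factors remain controlled by their \(H^{s/2}\)-norms after Cauchy--Schwarz, while still leaving a power of \(\jpb{\zeta}\) that is integrable against \(\hat\rho_R\). The hypotheses \(r_i<\frac{s}{2}\), \(\sum_i r_i < s - \frac{1}{2}\) and \(r_i + r_j < \frac{3s-1}{2}\) in \cref{assump:N} provide exactly enough room for such an allocation, and the single factor \(\abs{\zeta}\) from the mean-value theorem then delivers the required vanishing as \(R\to\infty\).
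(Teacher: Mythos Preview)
Your treatment of \cref{eq:com_L1} and the reductions \cref{eq:com_L2}, \cref{eq:com_B2} to the \(\rho_R\)-versions match the paper's argument essentially verbatim. The difference lies in how you handle the bilinear commutator \cref{eq:com_B1}.

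You reduce the bilinear commutator to linear ones via the telescoping identity
\[
\rho_R L_1(L_2u\cdot L_3u) - L_1(L_2(\rho_R u)\cdot L_3u) = [\rho_R,L_1](L_2u\cdot L_3u) + L_1\bigl([\rho_R,L_2]u\cdot L_3u\bigr),
\]
and then treat each piece with a uniform Peetre bound on the MVT remainder, absorbing the surplus \(\jpb{\zeta}^c\) into the Schwartz decay of \(\hat\rho\). The paper does \emph{not} decompose this way: it keeps the full trilinear integral and instead splits the convolution variable into the regions \(|t|\geq R^{-1/2}\) and \(|t|<R^{-1/2}\). For small \(|t|\) it applies the MVT to \(n_1,n_3\) separately (terms \(C,D\)) and closes with an \(L^2\times L^2\times L^1\) H\"older estimate; for large \(|t|\) it forgoes MVT altogether and instead uses that the translated multipliers \(\tilde n_i(\xi)=n_i(\xi-t)\) define \(L^{q}\)-bounded operators with polynomial growth in \(t\) (terms \(A,B\)), relying on the rapid decay of \(\hat\rho\) on \(|t|\geq R^{1/2}\) to kill that growth. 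Your uniform Peetre approach sidesteps this split entirely and avoids the \(L^q\)-Mikhlin bound for translated symbols that the paper needs; the trade-off is that you must control products like \(L_2u\cdot L_3u\) or \(L_1^*v\cdot L_3u\) in suitable \(H^b\) spaces, which you correctly flag as requiring the \cref{lem:B_upper}-type bookkeeping but do not actually carry out. Both routes are valid under \cref{assump:N}; yours is more conceptual, the paper's more explicit about where each constraint on the \(r_i\) is spent.
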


\begin{proof}
    \emph{Estimates for \(M\).} The mean value theorem applied to \(m\) implies that 
    \begin{align*}
        |m(\xi)- m(\eta)| &\lesssim |\xi-\eta|\sup_{|\theta| \leq |\xi|, |\eta|} \jpb{\theta}^{s-1}\\
        &\leq |\xi-\eta|(\jpb{\xi}^{s} + \jpb{\eta}^{s})\\
        &\lesssim |\xi-\eta|\jpb{\xi}^{\frac{s}{2}}\jpb{\eta}^{\frac{s}{2}}\jpb{\xi-\eta}^{\frac{s}{2}},
    \end{align*}
    for all \(\xi, t \in \R\). Combining this with Plancherel's and Fubini's theorems, \cref{eq:com_L1} follows from a direct calculation:
    \begin{align}
    &|\int_{\R} v (\rho_RM u - M(\rho_R u))\,dx|\nonumber\\
    &\leq \int_{\R} \abs{\bar{\hat{v}}(\xi)} \int_{\R}|\widehat{\rho_R}(t)||\hat{u}(\xi-t)||m(\xi-t)-m(\xi)|\,dt d\xi\nonumber\\
    &\lesssim \int_{\R} |\widehat{\rho_R}(t)||t|\jpb{t}^{\frac{s}{2}} \int_{\R}\abs{\bar{\hat{v}}(\xi)}|\hat{u}(\xi-t)|\jpb{\xi}^{\frac{s}{2}}\jpb{\xi-t}^{\frac{s}{2}}d\xi dt\nonumber\\
    &\leq \norm{v}_{H^{\frac{s}{2}}}\norm{u}_{H^{\frac{s}{2}}} \int_{\R} |R\hat{\rho}(Rt)||t|\jpb{t}^{\frac{s}{2}} dt\nonumber\\
    &= \norm{v}_{H^{\frac{s}{2}}}\norm{u}_{H^{\frac{s}{2}}}{\frac{1}{R}\int_{\R} |\hat{\rho}(\eta)||\eta|\jpb{\eta/R}^{\frac{s}{2}} \,d\eta}\label{eq:dic_lin}\\
    &\xrightarrow[]{R\to\infty} 0\nonumber
    \end{align}
    since \(\rho\in \Sc(\R)\). To show \cref{eq:com_L2}, simply observe that
    \begin{equation*}
    \int_{\R} v ((1-\rho_R)M u - M((1-\rho_R) u))\,dx= \int_{\R} v (-\rho_R Mu + M\rho_R u)\,dx\xrightarrow[]{R\to\infty} 0.
    \end{equation*}

\emph{Estimates for \(N\).} Using that \(n\) is separable, a similar calculation for \(N\) yields
\begin{align*}
    &|\int_{\R}v(\rho_{R}N(u,u) - N(\rho_{R}u, u))\,dx| \\
    &= |\int_{\R} \overline{\hat{v}(\xi)}(\hat{\rho}_{R}\ast \int_{\R}n(\cdot-\eta,\eta)\hat{u}(\cdot-\eta)\hat{u}(\eta)\,d\eta) (\xi)\\
    &\quad- \int_{\R}n(\xi-\eta, \eta)(\hat{\rho}_{R}\ast \hat{u})(\xi-\eta) \hat{u}(\eta) \,d\eta\,d\xi|
\end{align*}
This is smaller than or equal to
\begin{align*}
    \underbrace{|\int_{\R} \overline{\hat{v}}\left(\hat{\rho}_R \ast ({n_1(-\cdot)}(n_2\hat{u}\ast n_3\hat{u}))- {n_1(-\cdot)}(n_2\hat{u}\ast n_3(\hat{\rho}_R\ast\hat{u}))\right)d\xi|}_{I}&\\
    &\hspace{-1.8cm} + \text{cyclic permutations},
\end{align*}
where the the cyclic permutations are the two extra terms found by cycling through \(n_1, n_2, n_3\) in the expression for \(I\).
    We show that \(I\) approaches zero as \(R\) tends to infinity. The same argument can be applied to the other two terms, giving \cref{eq:com_B1}. Writing out the convolutions, \(I\) equals
\begin{equation*}
    \begin{split}I = \vert\int_{\R^3}\overline{\hat{v}(\xi)}\hat{\rho}_{R}(t)&\hat{u}(\eta)\hat{u}(\xi-t-\eta)n_2(\eta)\\
&\cdot({n_1(-(\xi-t))}n_3(\xi-t-\eta)-n_1(-\xi)n_3(\xi-\eta))\,d\eta\,d\xi\,dt\vert.
    \end{split}
\end{equation*}
We move the absolute value inside the integral and divide the domain into \(\abs{t}\geq R^{-\frac{1}{2}}\) and \(\abs{t}<R^{-\frac{1}{2}}\). For large \(t\) we use the triangle inequality to bound the difference to the right. For small \(t\), we use the identity \({ab-cd = c(b-d) + b(a-c)}\). Then we can estimate \(I\) by
\begin{equation}
I\leq \int_{\abs{t}\geq R^{-\frac{1}{2}}}\abs{\hat{\rho}_{R}(t)}(A + B)\,dt +  \int_{\abs{t}<R^{-\frac{1}{2}}}\abs{\hat{\rho}_{R}(t)}(C+ D)\,dt,
\label{eq:com_T_est}
\end{equation}
where
{\medmuskip=1mu \thinmuskip=1.3mu \thickmuskip=1mu 
\begin{align*}
    A&=\vert\int_{\R^2}\overline{\hat{v}(\xi)}\hat{u}(\eta)\hat{u}(\xi-t-\eta)n_2(\eta){n_1(-(\xi-t))}n_3(\xi-t-\eta)\,d\eta\,d\xi\vert,\\
    B&=\vert\int_{\R^2}\overline{\hat{v}(\xi)}\hat{u}(\eta)\hat{u}(\xi-t-\eta)n_2(\eta)n_1({-}\xi)n_3(\xi-\eta)\,d\eta\,d\xi\vert,\\
    C&=\int_{\R^2}\abs{\overline{\hat{v}(\xi)}}\abs{\hat{u}(\eta)}\abs{\hat{u}(\xi-t-\eta)}\abs{n_1({-}\xi)}\abs{n_2(\eta)}\abs{n_3(\xi-t-\eta)-n_3(\xi-\eta)}\,d\eta\,d\xi,\\
    D&=\int_{\R^2}\abs{\overline{\hat{v}(\xi)}}\abs{\hat{u}(\eta)}\abs{\hat{u}(\xi-t-\eta)}\abs{n_2(\eta)}\abs{n_3(\xi-t-\eta)}\abs{{n_1(-(\xi-t))}-n_1({-}\xi)}\,d\eta\,d\xi.
    \end{align*}}

We wish to show that \(A, B, C, D\) are bounded by polynomials in \(t\), and begin with \(A\) and \(B\). Recall that a general Fourier multiplier is bounded from \(L^q(\R)\to L^q(\R)\) if {its symbol \(\sigma\in C^{1}(\R)\) and}

\begin{equation*}
    {\abs{\sigma(\xi)}\lesssim 1, \,\text{ and }\, \abs{\frac{\partial \sigma}{\partial \xi}(\xi)}\lesssim \jpb{\xi}^{-1} \quad \text{ for all }\xi\in \R.}
\end{equation*}
{Since \(\jpb{\xi-t}^{a} \lesssim \jpb{\xi}^{a}\jpb{t}^{\abs{a}}\), the operator with symbol \(\frac{n_i(\xi-t)}{\jpb{\xi}^{r_i}\jpb{t}^{\abs{r_i} + 1}}\) is bounded from \(L^q(\R)\) to \(L^q(\R)\), uniformly in \(t\)}. Let \(\tilde{L}_{i}\) denote the operator with the translated symbol \(\tilde{n}_{i}(\xi) = n_{i}(\xi-t)\). Then
\begin{equation*}
    {\norm{\tilde{L}_iu}_{L^q}} = \jpb{t}^{\abs{r_i}+1}\norm{\F^{-1}\!\!\left(\frac{n_i(\xi-t)}{\jpb{\xi}^{r_i}\jpb{t}^{\abs{r_i}+1}}\jpb{\xi}^{r_i}\hat{u}(\xi)\right)\!\!}_{L^q}\lesssim \jpb{t}^{\abs{r_i}+1}\norm{\Lambda^{r_i}u}_{L^q}.
\end{equation*}
{Using \(\tilde{L}_{1}\) to rewrite \(A\) and applying H\"olders inequality and the continuity of \(L_{2}, L_{3}\) from \(H^{r_{1}}_{p}( \mathbb{R})\) to \(L^{p}(\mathbb{R})\), as well as the estimate for \(\tilde{L}_{i}\) above, gives {
\begin{align*}
    A \leq \int_{ \mathbb{R}}\left| L_{2}uL_{3}u \tilde{L}_{1}v \right|\,dx&\lesssim \left\|u \right\|_{H^{r_{2}}_{q_{2}}}\left\|u \right\|_{H^{r_{3}}_{q_{3}}}\|\tilde{L}_{1}v \|_{L^{q_{1}}} \\
    &\lesssim \jpb{t}^{\abs{r_1}+1} \left\|v \right\|_{H^{r_{1}}_{q_{1}}}\left\|u \right\|_{H^{r_{2}}_{q_{2}}}\norm{u}_{H^{r_{3}}_{q_{3}}},
\end{align*}}
for \(q_{i}, i=1,2,3\) such that \(\sum_{i=1}^3 \frac{1}{q_{i}} = 1.\)
As in \cref{lem:B_upper}, one may pick the \(q_{i}\)'s such that \(H^{\frac{s}{2}}( \mathbb{R})\hookrightarrow H^{r_{i}}_{q_{i}}(\mathbb{R})\), yielding the upper bound
\begin{equation*}
    A\lesssim \jpb{t}^{\abs{r_1}+1} \left\|u \right\|_{H^{\frac{s}{2}}}^{2}\norm{v}_{H^{\frac{s}{2}}} \lesssim \jpb{t}^{\max_{i=1,2,3} \abs{r_i} + 1} \left\|u \right\|_{H^{\frac{s}{2}}}^{2}\norm{v}_{H^{\frac{s}{2}}}
\end{equation*}
In the same manner, one can show that}
\begin{equation*}
    { B \lesssim \jpb{t}^{\abs{r_3}+1}\norm{u}_{H^{\frac{s}{2}}}^2\norm{v}_{H^{\frac{s}{2}}}\lesssim \jpb{t}^{\max_{i=1,2,3} \abs{r_i} + 1} \left\|u \right\|_{H^{\frac{s}{2}}}^{2}\norm{v}_{H^{\frac{s}{2}}}.}
\end{equation*}
Turning to \(C\), we can assume that \(\abs{t}\leq 1\) since \(R^{-\frac{1}{2}}\to0\) as \(R\to \infty\). By assumption on \(n_i\) and the mean value theorem, 
\begin{equation*}
    \begin{split}
    \abs{n_i(\xi-t) -n_i(\xi)} &\leq {\sup_{\theta \in [\min(\xi-t, \xi), \max(\xi-t, \xi)]} \abs{t} \left| \frac{\partial n_i}{\partial\xi}(\theta)\right|}\\
    &\lesssim \sup_{{\theta \in [\min(\xi-t, \xi), \max(\xi-t, \xi)]}} \abs{t} \jpb{\theta}^{r_i-1}\\
    &\lesssim \abs{t}\jpb{\xi}^{r_i-1}.
    \end{split}
\end{equation*}
Since {\(r_i-1<\frac{s}{2}-1<\frac{s-1}{2}\)} for \(i=1,2,3\),
\begin{align*}
    C &= \norm{\abs{\overline{\hat{v}}n_3}\abs{\hat{u}n_1}\ast \abs{{\hat{u}(\cdot -t)}(n_2(\cdot-t)-n_2(\cdot))}}_{L_1}\\
    &\lesssim \norm{\hat{v}\jpb{\cdot}^{r_3}}_{L^2}\norm{\hat{u}\jpb{\cdot}^{r_1}}_{L^2}\abs{t}\norm{\hat{u}\jpb{\cdot}^{r_2-1}}_{L^1}\\
    &\lesssim\abs{t}{\norm{u}_{H^{\frac{s}{2}}}^2\norm{v}_{H^{\frac{s}{2}}}}.
\end{align*} 
The same estimate holds for \(D\) by a similar argument. 
Having established these bounds, we insert them into \cref{eq:com_T_est} and find {
\begin{align}
    I&\lesssim  \norm{u}_{H^{\frac{s}{2}}}^2\norm{v}_{H^{\frac{s}{2}}}(\int_{\abs{t}\geq R^{-\frac{1}{2}}}\abs{\hat{\rho}_{R}(t)}\jpb{t}^{\max_{i=1,2,3} \abs{r_i} + 1}\,dt+ \int_{\abs{t}<R^{-\frac{1}{2}}}\abs{\hat{\rho}_{R}(t)}\abs{t}\,dt)\nonumber\\
    &=\norm{u}_{H^{\frac{s}{2}}}^2\norm{v}_{H^{\frac{s}{2}}}(\int_{\abs{t}\geq R^{\frac{1}{2}}}\abs{\hat{\rho}(t)}\jpb{t/R}^{\max_{i=1,2,3} \abs{r_i} + 1}\,dt+ \frac{1}{R}\int_{\abs{t}<R^{\frac{1}{2}}}\abs{\hat{\rho}(t)}\abs{t}\,dt)\nonumber\\
    &\leq \norm{u}_{H^{\frac{s}{2}}}^2\norm{v}_{H^{\frac{s}{2}}}(\int_{\abs{t}\geq R^{\frac{1}{2}}}\abs{\hat{\rho}(t)}\jpb{t}^{\max_{i=1,2,3} \abs{r_i} + 1}\,dt+ \frac{1}{R}\intR\abs{\hat{\rho}(t)}\abs{t}\,dt)\label{eq:dic_nonlin}\\
    &\xrightarrow[]{R\to \infty}0, \nonumber
\end{align}}
where the limit holds since \(\rho\in\Sc(\R)\).

\Cref{eq:com_B2} follows from a simple calculation using that \(N\) is a bilinear operator:
{\medmuskip=1mu \thinmuskip=1.3mu \thickmuskip=1mu 
\begin{align*}
|\int_{\R}&v((1-\rho_{R})N(u,u) - N((1-\rho_{R})u, u))\,dx|\\
&= |\int_{\R} - v\rho_{R}N(u,u) + vN(\rho_{R}u, u)\,dx|= |\int_{\R}v(\rho_{R}N(u,u) - N(\rho_{R}u, u))\,dx|\to 0.
\end{align*}}
\end{proof}
{
\begin{remark}
    One may apply the result to sequences \(\{u_{n}\}_{n\in \mathbb{N}}, \{v_{n}\}_{n\in \mathbb{N}}\) provided they are uniformly bounded in \(H^{\frac{s}{2}}(\mathbb{R})\). From \cref{eq:dic_lin,eq:dic_nonlin}, it is clear that the factor approching zero as \(R\to \infty\) is independent of \(u, v\). Thus we may simply do the same calculations for \(u_{n}, v_{n}\) and bound \(\left\|u_{n} \right\|_{H^{\frac{s}{2}}}, \left\|v_{n} \right\|_{H^{\frac{s}{2}}}\) by their uniform upper bounds before taking the limit. \label{rem:dichotomy}
\end{remark}
}
\Cref{lem:commutator} is used to show that \(\mathcal{E}(u_n)\) decomposes if dichotomy occurs.
\begin{lemma}
\sloppypar{Let \(\{u_n\}_{n\in\N}\) be a minimizing sequence for \(\Gamma_{\mu}\) and suppose that the sequence \(\{\frac{1}{2}u_n^2\}_{n\in\N}\) dichotomizes. {Pick smooth functions \(\phi, \psi \colon \mathbb{R}\to [0,1]\) such that}}
\begin{equation*}
\phi(x) = \begin{cases}
1 &\text{if \(|x|<1\)}\\
0 &\text{if \(|x|>2\)},
\end{cases}
\end{equation*}
{\(\phi(x) + \psi(x) = 1\) and such that \(\phi^{1/2}, \psi^{1/2}, \phi^{1/3}, \psi^{1/3}\) are also smooth (see \cref{rem:smooth_func})}.  Set \(\phi_n(x) = \phi(\frac{x-y_n}{R_n})\) and \(\psi_n(x) = \psi(\frac{x-y_n}{R_n})\), where \(\{y_n\}_{n\in\N} \subset \R\) and \(\{R_n\}_{n\in\N} \subset \R\). Define \(u_n^{(1)} = \phi_n u_n, u_n^{(2)} = \psi_n u_n\). For some \(\bar{\mu} \in (0,\mu)\), it is possible to find sequences \(\{y_n\}_{n\in\N}, \{R_n\}_{n\in\N}\) satisfying \(R_n \to \infty\) as \(n\to \infty\) such that \(\{u_n\}_{n\in\N}\) and the two sequences \(\{u_n^{(1)}\}_{n\in\N}\) and \(\{u_n^{(2)}\}_{n\in\N}\) satisfy
\begin{align}
\lim_{n\to\infty}\mathcal{Q}(u_n^{(1)})&=\bar{\mu},
\label{eq:ifdico1}\\
\lim_{n\to\infty}\mathcal{Q}(u_n^{(2)})&=(\mu-\bar{\mu}),\label{eq:ifdico2}\\
\lim_{n\to\infty}\frac{1}{2}\int_{|x-y_n|\in(R_n,2R_n)} u_n^2 \,dx &= 0,\label{eq:ifdico3}\\
\lim_{n\to\infty}(\mathcal{E}(u_n^{(1)}) + \mathcal{E}(u_n^{(2)})) &\leq \lim_{n\to\infty} \mathcal{E}(u_n).
\label{eq:ifdicoEgeq}
\end{align}
\label{lem:ifdico}
\end{lemma}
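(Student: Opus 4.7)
The plan is to apply Lions' concentration--compactness principle (\cref{thm:cc}) to $\rho_n = \tfrac{1}{2}u_n^2$, which dichotomizes by hypothesis. A standard diagonal extraction from the resulting $L^1$-densities---in the spirit of \cite{maehlen2020,hildrum2020}---produces a centre $y_n\in\R$ and a scale $R_n\to\infty$ such that $\tfrac{1}{2}\int_{|x-y_n|<R_n}u_n^2\,dx \to \bar{\mu}$, $\tfrac{1}{2}\int_{|x-y_n|>2R_n}u_n^2\,dx \to \mu-\bar{\mu}$, and $\tfrac{1}{2}\int_{R_n<|x-y_n|<2R_n}u_n^2\,dx \to 0$. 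The last is exactly \cref{eq:ifdico3}, while \cref{eq:ifdico1,eq:ifdico2} follow from the identity $\phi_n^2 = \phi_n - \phi_n\psi_n$ together with the vanishing middle mass.

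The principal step is \cref{eq:ifdicoEgeq}, which I would handle by treating $\mathcal{M}$ and $\mathcal{N}$ separately. Since $u_n = u_n^{(1)} + u_n^{(2)}$ and $M$ is symmetric, the linear defect equals $\int u_n^{(1)} M u_n^{(2)}\,dx$. Two successive applications of \cref{lem:commutator}---first to push $\phi_n$ through $M$, then with the scaled Schwartz function $\chi = (\phi\psi)^{1/2}$, smooth by the hypothesis on $\phi^{1/2}, \psi^{1/2}$---rewrite this as $2\mathcal{M}(\chi_n u_n) + o(1)$, where $\chi_n(x) = \chi((x-y_n)/R_n)$. Since $m\geq 0$, this last quantity is non-negative, yielding $\mathcal{M}(u_n^{(1)}) + \mathcal{M}(u_n^{(2)}) \leq \mathcal{M}(u_n) + o(1)$. (Translation of $\rho$ by $y_n$ is harmless in \cref{lem:commutator}: it only multiplies $\hat\rho$ by a unimodular phase, so the proof goes through verbatim.)

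For $\mathcal{N}$ I would expand the cubic functional using bilinearity of $N$ and group the six off-diagonal terms via the cyclic symmetry of the trilinear form $T(a,b,c) = \int a\,N(b,c)\,dx$ (a consequence of the cyclic symmetry of $n$). Combined with bilinearity in the first argument of $N$, this collapses the defect into
\[
\mathcal{N}(u_n) - \mathcal{N}(u_n^{(1)}) - \mathcal{N}(u_n^{(2)}) \;=\; \int u_n^{(1)} N(u_n, u_n^{(2)})\,dx \;=\; \int u_n^{(2)} N(u_n^{(1)}, u_n)\,dx,
\]
where the second equality uses $T(u_n^{(1)}, u_n, u_n^{(2)}) = T(u_n^{(2)}, u_n^{(1)}, u_n)$. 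Applying \cref{eq:com_B1} with $\rho_R = \phi_n$ and $v = \psi_n u_n$ then identifies the right-hand side with $\int \phi_n\psi_n u_n\,N(u_n,u_n)\,dx + o(1)$. The second estimate in \cref{lem:B_upper} bounds the absolute value by a constant times $\|\phi_n\psi_n u_n\|_{H^{r_1}_{q_1}}\|u_n\|_{H^{r_2}_{q_2}}\|u_n\|_{H^{r_3}_{q_3}}$, and interpolating the first factor between $\|\phi_n\psi_n u_n\|_{L^2}\to 0$ (by \cref{eq:ifdico3}) and its uniform $H^{s/2}$-bound---valid since $\phi_n\psi_n$ has $C^k$-norm uniformly bounded in $n$---gives $o(1)$. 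Hence $\mathcal{N}(u_n^{(1)}) + \mathcal{N}(u_n^{(2)}) = \mathcal{N}(u_n) + o(1)$, and combining with the linear estimate yields \cref{eq:ifdicoEgeq}.

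The main difficulty is that the nonlocality of both $M$ and $N$ precludes any naive spatial splitting of the energies; this is precisely what \cref{lem:commutator} is designed to overcome. The smoothness hypotheses on $\phi^{1/2}, \psi^{1/2}$ (and on $\phi^{1/3}, \psi^{1/3}$, which would support a cube-root variant of the above argument should one prefer an exact splitting of $\mathcal{N}$ via $L_i(\phi_n^{1/3}u_n)\approx \phi_n^{1/3}L_iu_n$ in each of the three factors) ensure that all auxiliary cut-offs arising remain in $\Sc(\R)$ after scaling, so that \cref{lem:commutator} applies to them directly.
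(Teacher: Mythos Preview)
Your treatment of \cref{eq:ifdico1,eq:ifdico2,eq:ifdico3} and of the linear part \(\mathcal{M}\) (via the commutator with \((\phi\psi)^{1/2}\)) matches the paper exactly. For the nonlinear part you take a genuinely different route: the paper expands \(\mathcal{N}(u_n^{(1)}+u_n^{(2)})\) into the two cross terms \(\int u_n^{(1)} N(u_n^{(2)},u_n^{(2)})\,dx\) and \(\int u_n^{(2)} N(u_n^{(1)},u_n^{(1)})\,dx\), then uses \cref{lem:commutator} repeatedly with the cube-root cutoffs \(\phi_n^{1/3},\psi_n^{1/3}\) to rewrite each as (approximately) \(\mathcal{N}(\phi_n^{1/3}\psi_n^{2/3}u_n)\); the \emph{first} estimate of \cref{lem:B_upper} then delivers an \(L^2\)-power \(3-\gamma>1\) on a function supported in the annulus. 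Your identity \(\mathcal{N}(u_n)-\mathcal{N}(u_n^{(1)})-\mathcal{N}(u_n^{(2)})=\int u_n^{(2)} N(u_n^{(1)},u_n)\,dx\) is correct and rather elegant---it needs only cyclic symmetry and a single application of \cref{eq:com_B1}, and shows that the cube-root hypothesis on \(\phi,\psi\) is not strictly necessary for this step.

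There is, however, a gap in your final estimate. The integral \(\int \phi_n\psi_n u_n\,N(u_n,u_n)\,dx\) is a sum of three cyclic terms, with \(\phi_n\psi_n u_n\) sitting in each of the three slots, not just the first; your bound as written covers only one of them. More seriously, your interpolation ``between \(\|\phi_n\psi_n u_n\|_{L^2}\) and its uniform \(H^{s/2}\)-bound'' requires each embedding \(H^{s/2}\hookrightarrow H^{r_i}_{q_i}\) to be \emph{subcritical}. With the specific exponents constructed in the proof of \cref{lem:B_upper} (the case \(\tfrac{s-1}{2}<r_1\)), one has \(\tau_3=s\), so for the cyclic term in which the highest-order \(L_i\) acts on \(\phi_n\psi_n u_n\) the embedding is critical and no positive \(L^2\)-power survives. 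This is easily repaired---since \(r_3<\tfrac{s}{2}\) strictly, one can perturb the H\"older triple so that all \(\tau_i<s\)---but the adjustment should be made explicit. The paper's cube-root reduction to \(\mathcal{N}\) of a single function sidesteps this issue entirely, which is precisely what the hypothesis on \(\phi^{1/3},\psi^{1/3}\) buys.
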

{
\begin{remark}
    Functions \(\phi, \psi\) satisfying the requirements can for example be constructed using sums, dilations, and translations starting from \(e^{-\left(\frac{x^2}{1-x^2}\right)^3}\).\label{rem:smooth_func}
\end{remark}}
\begin{proof} 
{If \(\frac{1}{2}u_{n}^2\) dichotomizes, it follows from the definition of dichotomy in \cref{thm:cc} that for every \(\varepsilon>0\) we can find \(\bar{\mu}\in(0,\mu)\), two sequences \(\{\rho_n^{(1)}\}_{n\in\N}\) and \(\{\rho_n^{(2)}\}_{n\in\N} \in L^1(\R)\), and \(n_{0}\in \mathbb{N}\) satisfying for all \(n\geq n_{0}\)
\begin{equation*}
    \begin{split}
\norm{\frac{1}{2} u_n^2 - (\rho_n^{(1)} + \rho_n^{(2)})}_{L^1}&\leq \varepsilon,\\
|\int_{\R} \rho_n^{(1)} \,dx - \bar{\mu}|&\leq \varepsilon,\\
|\int_{\R} \rho_n^{(2)} \,dx - (\mu-\bar{\mu})|&\leq \varepsilon.
    \end{split}
\end{equation*}
and \(\dist\left(\supp(\rho_{k}^{1}), \supp(\rho_{k}^{2})  \right) \to \infty\). One may also, see \cite{lions1984}, find sequences \(\{y_n\}_{n\in\N} \subset \mathbb{R}, \{R_n\}_{n\in\N} \subset \mathbb{R}\) where \(R_{n} \to \infty\) as \(n\to \infty\) such that, for \(n\geq n_{0}\),}
\begin{equation*}
    \begin{split}
\supp \rho_n^{(1)} &\subset (y_n - R_n, y_n + R_n),\\
\supp \rho_n^{(2)} &\subset (-\infty, y_n - 2R_n) {\textstyle \bigcup} (y_n + 2R_n, \infty).
    \end{split}
\end{equation*}
Then clearly 
\begin{equation*}
\frac{1}{2}\int_{|x-y_n|\in(R_n,2R_n)} u_n^2 \,dx \to 0 \quad \text{as }n\to \infty,
\end{equation*}
which is \cref{eq:ifdico3}. \Cref{eq:ifdico1} is established by a straightforward calculation,
\begin{align*}
|\mathcal{Q}(&u_n^{(1)})-\bar{\mu}| \leq |\int_{\R}\frac{1}{2}\phi_n^{2} u_n^{2}-\rho_n^{(1)}\,dx|+ |\int_{\R}\rho_n^{(1)}\,dx-\bar{\mu}|\\
&= |\int_{|x-y_n|\in(0,R_n)}\frac{1}{2} u_n^{2}-\rho_n^{(1)}-\rho_n^{(2)}\,dx| + |\int_{|x-y_n|\in(R_n,2R_n)} \frac{1}{2}\phi_n^{2}u_n^{2}\,dx|\\
&\quad + \vert\int_{\R}\rho_n^{(1)}\,dx-\bar{\mu}|\\
&\leq {\int_{\R}\left| \frac{1}{2} u_n^{2}-\rho_n^{(1)}-\rho_n^{(2)}\right|\,dx} + |\int_{|x-y_n|\in(R_n,2R_n)} \frac{1}{2}u_n^{2}\,dx|+ |\int_{\R}\rho_n^{(1)}\,dx-\bar{\mu}|\\
&\xrightarrow[]{n\to\infty} 0,
\end{align*}
and the result for \(u_n^{(2)}\), \cref{eq:ifdico2}, is found similarly.
To show \cref{eq:ifdicoEgeq}, observe that {
\begin{align*}
\mathcal{E}(u_n) &= \mathcal{M}(\phi_n u_n + \psi_n u_n) - \mathcal{N}(\phi_n u_n + \psi_n u_n)\\
&= \mathcal{M}(\phi_n u_n) + \mathcal{M}(\psi_n u_n) + \!\!\int_{\R}\phi_n u_n  M(\psi_n u_n)\,dx - \mathcal{N}(\phi_n u_n) - \mathcal{N}(\psi_n u_n)\\
&\quad - \int_{\R} \phi_n u_n N(\psi_n u_n, \psi_n u_n)\,dx  - \int_{\R}\psi_n u_n N(\phi_n u_n, \phi_n u_n).
\end{align*}}
In the last equality, we used the symmetry of \(M\) and \(N\). In light of this, it suffices to show that
\begin{align}
\lim_{n\to \infty}(\int_{\R}\phi_n u_n  M(\psi_n u_n)\,dx) &\geq 0,\label{eq:Egeq1}\\
\lim_{n\to \infty}(\int_{\R} \phi_n u_n N(\psi_n u_n, \psi_n u_n)\,dx) &= 0,\label{eq:Egeq2}\\
\lim_{n\to \infty} (\int_{\R}\psi_n u_n N(\phi_n u_n, \phi_n u_n)) &= 0.\label{eq:Egeq3}
\end{align}
{ To show \cref{eq:Egeq1} we will use \cref{lem:commutator} repeatedly. Here \(\{u_{n}\}_{n\in \mathbb{N}}\), which is uniformly bounded in \(H^{\frac{s}{2}}(\mathbb{R})\) by \cref{lem:normest}, takes the place of \(u\) and \(v\), see \cref{rem:dichotomy}.  First, we let \((\phi_n)^{\frac{1}{2}}\) and \((\psi_n)^{\frac{1}{2}}\) correspond to \(\rho_{R}\) and \(1-\rho_{R}\) respectively:}
\begin{align*}
    \lim_{n\to\infty}(\int_{\R}\phi_n u_n M(\psi_n u_n)\,dx)
    &= {\lim_{n\to\infty}(\int_{\R}(\phi_n\psi_n)^{\frac{1}{2}}u_n M((\phi_n \psi_n)^{\frac{1}{2}}u_n)\,dx)}\\ 
    &= \lim_{n\to\infty} \intR m(\xi)\abs{\widehat{((\phi_n\psi_n)^{\frac{1}{2}}u_n)}}^2\,d\xi\\
    &\geq 0.
    \end{align*}
To show \cref{eq:Egeq2}, we do the same, but with \((\phi_n)^{1/3}\) as \(\rho_{R}\) and \((\psi_n)^{1/3}\) as \((1-\rho_{R})\). Then we apply \cref{lem:B_upper}, where \(\gamma<2\). 
\begin{align*}
\lim_{n\to\infty}|\int_{\R} \phi_n u_n N(\psi_n u_n, &\psi_n u_n)\,dx|\\
&= \lim_{n\to\infty}|\mathcal{N}((\phi_n)^{\frac{1}{3}}(\psi_n)^{\frac{2}{3}} u_n)|\\
&\lesssim\lim_{n\to\infty}\norm{(\phi_n)^{\frac{1}{3}}(\psi_n)^{\frac{2}{3}} u_n}_{L^{2}}^{3-\gamma}\norm{(\phi_n)^{\frac{1}{3}}(\psi_n)^{\frac{2}{3}} u_n}_{H^{\frac{s}{2}}}^{\gamma}\\
&\lesssim\lim_{n\to\infty} \norm{u_n}_{H^{\frac{s}{2}}}^{\gamma}\lim_{n\to \infty}\left(\int_{|x-y_n|\in(R_n,2R_n)}u_n^{2}\,dx\right)^{(3-\gamma)/2}\\
&=0.
\end{align*}
\Cref{eq:Egeq3} is shown in the same manner, and we conclude that \cref{eq:ifdicoEgeq} holds if dichotomy occurs. 
\end{proof}
Combining \cref{lem:subadditivity,lem:commutator,lem:ifdico} gives rise to the aforementioned contradiction and allows us to exclude dichotomy.
\begin{lemma}
    Dichotomy does not occur.
    \label{lem:dichotomy}
\end{lemma}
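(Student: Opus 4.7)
The plan is the standard concentration--compactness endgame: assume dichotomy holds, decompose the minimizing sequence using \cref{lem:ifdico}, rescale the two pieces to have the exact prescribed masses, and obtain an inequality $\Gamma_{\bar\mu}+\Gamma_{\mu-\bar\mu}\le\Gamma_{\mu}$ which contradicts the strict sub-additivity from \cref{lem:subadditivity}.

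More concretely, I would argue by contradiction: suppose dichotomy occurs for $\{\tfrac{1}{2}u_n^2\}_{n\in\N}$ with parameter $\bar\mu\in(0,\mu)$. \Cref{lem:ifdico} then furnishes cutoffs $\phi_n,\psi_n$ and sequences $u_n^{(1)}=\phi_n u_n$, $u_n^{(2)}=\psi_n u_n$ satisfying \cref{eq:ifdico1,eq:ifdico2,eq:ifdicoEgeq}. Because the masses only converge to $\bar\mu$ and $\mu-\bar\mu$ rather than equal them, I define the rescaled functions
\[
\tilde{u}_n^{(1)} = \sqrt{\tfrac{\bar\mu}{\mathcal{Q}(u_n^{(1)})}}\,u_n^{(1)}, \qquad \tilde{u}_n^{(2)} = \sqrt{\tfrac{\mu-\bar\mu}{\mathcal{Q}(u_n^{(2)})}}\,u_n^{(2)},
\]
so that $\mathcal{Q}(\tilde u_n^{(i)})$ is exactly the target mass and $\tilde u_n^{(i)}$ is admissible in the definition of $\Gamma_{\bar\mu}$ and $\Gamma_{\mu-\bar\mu}$ respectively. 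The scalars $\alpha_n^{(i)}$ tend to $1$ by \cref{eq:ifdico1,eq:ifdico2}, and since $\{u_n^{(i)}\}$ is uniformly bounded in $H^{s/2}(\R)$ (multiplication by the smooth, uniformly bounded cutoffs $\phi_n,\psi_n$ preserves the $H^{s/2}$-bound coming from \cref{lem:normest}, because $R_n\to\infty$ makes the derivatives of the cutoffs vanish), both $\mathcal{M}(u_n^{(i)})$ and $\mathcal{N}(u_n^{(i)})$ are uniformly bounded in $n$ (the latter by \cref{lem:B_upper}). Therefore
\[
\mathcal{E}(\tilde u_n^{(i)}) - \mathcal{E}(u_n^{(i)}) = ((\alpha_n^{(i)})^2-1)\mathcal{M}(u_n^{(i)}) - ((\alpha_n^{(i)})^3-1)\mathcal{N}(u_n^{(i)}) \xrightarrow[n\to\infty]{} 0.
\]

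From the definition of $\Gamma$ and the admissibility of $\tilde u_n^{(i)}$, we get $\mathcal{E}(\tilde u_n^{(1)})\geq\Gamma_{\bar\mu}$ and $\mathcal{E}(\tilde u_n^{(2)})\geq\Gamma_{\mu-\bar\mu}$. Passing to the limit and combining with \cref{eq:ifdicoEgeq},
\[
\Gamma_{\bar\mu}+\Gamma_{\mu-\bar\mu} \leq \liminf_{n\to\infty}\bigl(\mathcal{E}(\tilde u_n^{(1)})+\mathcal{E}(\tilde u_n^{(2)})\bigr) = \liminf_{n\to\infty}\bigl(\mathcal{E}(u_n^{(1)})+\mathcal{E}(u_n^{(2)})\bigr) \leq \lim_{n\to\infty}\mathcal{E}(u_n) = \Gamma_\mu,
\]
which directly contradicts the strict sub-additivity $\Gamma_\mu<\Gamma_{\bar\mu}+\Gamma_{\mu-\bar\mu}$ from \cref{lem:subadditivity}. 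Hence dichotomy cannot occur.

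The only step that requires a bit of care is the rescaling estimate: one must verify that $\mathcal{M}(u_n^{(i)})$ and $\mathcal{N}(u_n^{(i)})$ stay bounded. The heavy lifting — decomposing $\mathcal{E}$ compatibly with the spatial cutoffs so that the cross terms vanish — is already contained in \cref{lem:commutator,lem:ifdico}, so this final step reduces essentially to bookkeeping.
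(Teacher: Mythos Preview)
Your proposal is correct and follows the same approach as the paper: derive \(\Gamma_{\bar\mu}+\Gamma_{\mu-\bar\mu}\le\Gamma_\mu\) from \cref{lem:ifdico} and contradict \cref{lem:subadditivity}. The paper's proof is terser and simply writes \(\lim_n\mathcal{E}(u_n^{(i)})\ge\Gamma_{\bar\mu}\) (respectively \(\Gamma_{\mu-\bar\mu}\)) without spelling out the rescaling; your explicit treatment of the scaling factors \(\alpha_n^{(i)}\to 1\) and the uniform \(H^{s/2}\)-bounds on \(u_n^{(i)}\) makes that step rigorous.
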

\begin{proof}
{\Cref{lem:ifdico}} implies that
\begin{equation*}
{\Gamma_{\mu}= \lim_{n\to\infty}\mathcal{E}(u_n) \geq \lim_{n\to\infty}\mathcal{E}(u_n^{1})+\lim_{n\to\infty}\mathcal{E}(u_n^{2})\geq \Gamma_{\bar{\mu}} + \Gamma_{(\mu-\bar{\mu})}.}
\end{equation*}
This contradicts the subadditivity shown in \cref{lem:subadditivity}, and hence dichotomy cannot occur.
\end{proof}

\subsection{Existence of solutions}
\label{sec:existence} 
Having excluded vanishing and dichotomy, only the concentration--compactness alternative remains. We show that a minimizing sequence must then converge to a minimizer and that this minimizer is in fact a solution to the solitary-wave equation, \cref{eq:solitary_wave}.
\begin{lemma}[Existence of minimizer]
    Any minimizing sequence \(\{u_n\}_{n\in\N}\) of \(\Gamma_{\mu}\)converges, up to subsequences and translations, in \(H^{\frac{s}{2}}(\R)\) to a minimizer of \(\Gamma_{\mu}\).
    \label{lem:minimizer}
\end{lemma}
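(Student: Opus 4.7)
Since vanishing (\cref{lem:vanish_result}) and dichotomy (\cref{lem:dichotomy}) have been excluded, applying \cref{thm:cc} to \(\rho_n = \frac{1}{2}u_n^2\) yields, along a subsequence, a translation sequence \(\{y_n\}_{n\in\N}\subset\R\) for which the shifted sequence \(\tilde u_n(\cdot) = u_n(\cdot+y_n)\) satisfies the compactness alternative: for every \(\varepsilon>0\) there is \(r<\infty\) with \(\frac{1}{2}\int_{-r}^{r}\tilde u_n^2\,dx \geq \mu-\varepsilon\) for all \(n\). Translation invariance of \(M\) and \(N\) means \(\mathcal{E}\) and \(\mathcal{Q}\) are unaffected, so I would simply replace \(u_n\) by \(\tilde u_n\) and work with this tight minimizing sequence.

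By \cref{lem:normest} the sequence \(\{u_n\}_{n\in\N}\) is bounded in \(H^{\frac{s}{2}}(\R)\), and hence passes along a further subsequence to some weak limit \(u\in H^{\frac{s}{2}}(\R)\). The compact embedding on bounded intervals gives strong convergence \(u_n\to u\) in \(L^2([-r,r])\) for every \(r\); combining this with the tightness above controls the tails and upgrades to strong convergence \(u_n\to u\) in \(L^2(\R)\), so in particular \(\mathcal{Q}(u)=\mu\). Interpolation between \(L^2\) convergence and the uniform \(H^{\frac{s}{2}}\) bound then gives \(u_n\to u\) strongly in \(H^{\sigma}(\R)\) for every \(0\leq \sigma<\frac{s}{2}\).

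The third step is passing to the limit in the functionals. The form \(\mathcal{M}+\mathcal{Q}\) is equivalent to \(\norm{\cdot}_{H^{s/2}}^2\) and hence weakly lower semicontinuous, so \(\mathcal{M}(u)\leq \liminf\mathcal{M}(u_n)\). For the nonlinear part I would use trilinearity to decompose
\begin{equation*}
\mathcal{N}(u_n)-\mathcal{N}(u) = \tfrac{1}{3}\!\int_{\R}\!(u_n-u)\,N(u_n,u_n)\,dx + \tfrac{1}{3}\!\int_{\R}\! u\,N(u_n-u,u_n+u)\,dx
\end{equation*}
and bound each piece by the trilinear estimate in \cref{lem:B_upper}, choosing one of the three indices so that the factor containing \(u_n-u\) is measured in \(H^{r_i}_{q_i}\hookrightarrow H^{\sigma}\) for some \(\sigma<\frac{s}{2}\); the remaining two factors are uniformly bounded in \(H^{\frac{s}{2}}\). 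This forces \(\mathcal{N}(u_n)\to\mathcal{N}(u)\), and together with lower semicontinuity of \(\mathcal{M}\) yields \(\mathcal{E}(u)\leq \liminf \mathcal{E}(u_n) = \Gamma_\mu\); since \(\mathcal{Q}(u)=\mu\), \(u\) is a minimizer.

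To finish, equality \(\mathcal{E}(u)=\Gamma_\mu\) combined with \(\mathcal{N}(u_n)\to\mathcal{N}(u)\) forces \(\mathcal{M}(u_n)\to\mathcal{M}(u)\); with \(\mathcal{Q}(u_n)\to\mathcal{Q}(u)\) this gives convergence of the Hilbert norm \((\mathcal{M}+\mathcal{Q})^{1/2}\eqsim\norm{\cdot}_{H^{s/2}}\), which upgrades weak convergence to strong \(H^{\frac{s}{2}}\) convergence by the standard Hilbert-space argument. The main obstacle I anticipate is guaranteeing the convergence \(\mathcal{N}(u_n)\to\mathcal{N}(u)\) when one of the \(r_i\) sits close to its threshold: the exponents \(q_i,\tau_i\) in \cref{lem:B_upper} must be chosen \emph{strictly} inside the admissible window so that at least one factor in every trilinear term picks up a positive fractional power of \(\norm{u_n-u}_{L^2}\), which then supplies the decay.
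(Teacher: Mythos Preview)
Your proposal is correct and follows essentially the same route as the paper: concentration gives tightness, boundedness in $H^{s/2}$ gives a weak limit, tightness plus local compactness yields strong $L^2$ convergence (the paper phrases this via Kolmogorov--Riesz--Sudakov rather than Rellich on balls), a trilinear telescoping combined with the estimate of \cref{lem:B_upper} gives $\mathcal{N}(u_n)\to\mathcal{N}(u)$, and weak lower semicontinuity of $\mathcal{M}$ plus matching of $\mathcal{E}$ forces $\mathcal{M}(u_n)\to\mathcal{M}(u)$ and hence strong $H^{s/2}$ convergence. One small slip: the embedding you need is $H^{\sigma}(\R)\hookrightarrow H^{r_i}_{q_i}(\R)$ for some $\sigma<\tfrac{s}{2}$ (not the reverse), which is exactly what the strict inequalities in the admissible window of \cref{lem:B_upper} provide.
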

\begin{proof}
    \sloppypar{Let \(\{y_n\}_{n\in\N}\) be as in \cref{lem:ifdico} and define the translated sequence \(\tilde{u}_n(x) = u_n(x+y_n)\). Note that \({\norm{\tilde{u}_n}_{H^{\frac{s}{2}}}= \norm{u_n}_{H^{\frac{s}{2}}}<1/\delta}\) for some \(\delta>0\) by Lemma \ref{lem:normest}. Since \(\{\tilde{u}\}_{n\in\N}\) is bounded in \(H^{\frac{s}{2}}(\R)\), it admits a subsequence that converges  weakly in \(H^{\frac{s}{2}}(\R)\) to \(w\in H^{\frac{s}{2}}(\R)\).}
            
    Since the sequence concentrates, then for each \(\varepsilon>0\), there is an \(R>0\) such that
    \begin{equation*}
    \lim_{n\to\infty}\int_{{\left|x \right|>R}} \tilde{u}_n^{2}\,dx <\varepsilon.
    \end{equation*}
    If additionally \(\tilde{u}_n\) were uniformly continuous with respect to translation in \(L^{2}(\R)\), then Kolmogorov-Riez-Sudakov's compactness theorem would imply the existence of a subsequence that converges to \(\omega\) in \(L^{2}(\R)\). This is indeed the case: as \(y\to 0\),
    \begin{equation*}
        \begin{split}
        \norm{\tilde{u}_n(x+y) - \tilde{u}_n(x)}_{L^2}& = \norm{(e^{-iy\xi}-1)\hat{\tilde{u}}_n(\xi)}_{L^2}\\&\leq \sup_{\xi\in \R}|(e^{-iy\xi}-1) \jpb{\xi}^{-\frac{s}{2}}| \norm{u_n}_{H^{\frac{s}{2}}}\to 0.
        \end{split}
    \end{equation*}
    We now demonstrate that \(\mathcal{E}(w) = \Gamma_{\mu}.\)
    Fatou's lemma implies that
    \[
    \mathcal{M}(w) \leq \liminf_{n\to\infty} \mathcal{M}(\tilde{u}_n).
    \]
Furthermore, \(\mathcal{N}(\tilde{u}_n)\to \mathcal{N}(\omega)\), since 
\begin{align*}
|\mathcal{N}(\tilde{u}_n) - &\mathcal{N}(\omega)| = 
|\mathcal{N}(\tilde{u}_n - \omega) - \int_{\R}\tilde{u}_n N(\omega,\omega)\,dx +\int_{\R}\tilde{u}_n N(\omega, \tilde{u}_n)\,dx|\\
&\leq |\mathcal{N}(\tilde{u}_n -\omega)| + |\int_{\R}n(\xi-\eta, \eta)\bar{\hat{\tilde{u}}}_n(\xi)\hat{\omega}(\xi-\eta)(\hat{\omega}(\eta) - \hat{\tilde{u}}_n(\eta))\,dx|
\end{align*}
Using Lemma \ref{lem:B_upper}, where \(\gamma<2\), and the known \(L^{2}\)-convergence, 
\[
|\mathcal{N}(\tilde{u}_n -\omega)| \lesssim  \norm{\tilde{u}_n - \omega}_{L^{2}}^{3-\gamma}\norm{\tilde{u}_n-\omega}_{H^{\frac{s}{2}}}^{\gamma} \xrightarrow[n\to\infty]{} \, 0.
\]
To show that the second term also vanishes as \(n\to\infty\), we perform a similar calculation as in the proofs of Lemmas \ref{lem:B_upper} and \ref{lem:normest} for appropriate \(q_1, q_2, q_3\) and \(\tilde{s}\) such that \(r_i\leq\tilde{s}<\frac{s}{2}\):
\begin{align*}
|\int_{\R}n(\xi-\eta,& \eta)\bar{\hat{\tilde{u}}}_n(\xi)\widehat{\omega}(\xi-\eta)(\widehat{\omega}(\eta) - \widehat{\tilde{u}_n}(\eta))\,dx|\\
&\lesssim \norm{\tilde{u}_n}_{H^{\frac{s}{2}}}\norm{\tilde{\omega}_n}_{H^{\frac{s}{2}}}(\norm{\omega - \tilde{u}_n}_{H^{r_1}_{q_1}}+\norm{\omega - \tilde{u}_n}_{H^{r_2}_{q_2}}+\norm{\omega - \tilde{u}_n}_{H^{r_3}_{q_3}})\\
&\lesssim \norm{\tilde{u}_n}_{H^{\frac{s}{2}}}\norm{\tilde{\omega}_n}_{H^{\frac{s}{2}}}\norm{\omega - \tilde{u}_n}_{H^{\tilde{s}/2}}\\
&\lesssim \norm{\tilde{u}_n}_{H^{\frac{s}{2}}}\norm{\tilde{\omega}_n}_{H^{\frac{s}{2}}}\norm{\omega - \tilde{u}_n}_{L^2}^{1-\tilde{s}/s}\norm{\omega - \tilde{u}_n}_{H^{\frac{s}{2}}}^{\tilde{s}/s}\\
&\xrightarrow[n\to\infty]{} \,0
\end{align*}
since \(\tilde{u}_n \to \omega\) in \(L^{2}(\R)\).

We conclude that \(\mathcal{E}(\omega) = \Gamma_{\mu} = \lim_{n\to \infty} \mathcal{E}(\tilde{u}_n)\). Since \(\mathcal{N}(\tilde{u}_n) \to \mathcal{N}(\omega)\), then we must have \(\mathcal{M}(\tilde{u}_n) \to \mathcal{M}(\omega)\). Combined, this and the weak convergence of \(\tilde{u}_n\rightharpoonup \omega\) implies norm convergence in \(H^{\frac{s}{2}}(\R)\). 
\end{proof}

It is easily verifiable that the Fréchet derivatives \(D\mathcal{M}(u), D\mathcal{Q}(u)\) and \(D\mathcal{N}(u)\) are \((Mu, u)\) and \(N(u,u)\) respectively. Thus, minimizers of \(\Gamma_{\mu}\) are solutions of the solitary-wave equation \eqref{eq:solitary_wave} by the Lagrange multiplier principle. 
\begin{lemma}
Any minimizer \(u\in H^{\frac{s}{2}}(\R)\) of the constrained variational problem \(\Gamma_{\mu}\) solves the solitary-wave equation \eqref{eq:solitary_wave}, where \(\nu\) is the Lagrange multiplier. 
\label{lem:solutions}
\end{lemma}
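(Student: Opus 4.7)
The plan is to apply the Lagrange multiplier theorem in \(H^{\frac{s}{2}}(\R)\). Three things need to be checked: (a) the functionals \(\mathcal{E}\) and \(\mathcal{Q}\) are Fréchet differentiable there; (b) the constraint set is a smooth submanifold near \(u\), i.e.\ \(D\mathcal{Q}(u) \neq 0\); and (c) the Fréchet derivatives take the claimed form \(D\mathcal{Q}(u) = u\), \(D\mathcal{M}(u) = Mu\), and \(D\mathcal{N}(u) = N(u,u)\).

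Fréchet differentiability of the quadratic parts is immediate: \(\mathcal{Q}\) and \(\mathcal{M}\) are continuous symmetric bilinear forms with \(\mathcal{Q}(u) + \mathcal{M}(u) \eqsim \norm{u}_{H^{\frac{s}{2}}}^{2}\), and symmetry of \(M\) (since \(m\) is real and even) gives \(D\mathcal{M}(u) = Mu\). For \(\mathcal{N}\), the second estimate of \cref{lem:B_upper} shows that the trilinear form \((u,v,w)\mapsto \intR L_{1}u\, L_{2}v\, L_{3}w\,dx\) is continuous on \(H^{\frac{s}{2}}(\R)^{3}\); restricting to the diagonal \(u=v=w\) and summing cyclic permutations yields a continuous, hence Fréchet-smooth, cubic functional.

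The substantive step is identifying \(D\mathcal{N}(u) = N(u,u)\). Differentiating in the direction \(h \in H^{\frac{s}{2}}(\R)\) and applying Plancherel gives
\begin{align*}
\langle D\mathcal{N}(u), h\rangle = \tfrac{1}{3}\int_{\R^{2}} \bigl[\,\overline{\hat{h}}(\xi)&\hat{u}(\xi-\eta)\hat{u}(\eta) + \overline{\hat{u}}(\xi)\hat{h}(\xi-\eta)\hat{u}(\eta)\\
&+ \overline{\hat{u}}(\xi)\hat{u}(\xi-\eta)\hat{h}(\eta)\bigr]\, n(\xi-\eta,\eta) \,d\eta\,d\xi.
\end{align*}
Applying to the second and third integrals the changes of variables that implement the cyclic permutations on the frequency triple \((-\xi,\xi-\eta,\eta)\), and invoking the cyclic symmetry \cref{eq:cyclic_symmetry} of \(n\), all three terms reduce to \(\int_{\R}\overline{\hat{h}}(\xi)\widehat{N(u,u)}(\xi)\,d\xi\). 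Thus \(\langle D\mathcal{N}(u), h\rangle = \intR h\,N(u,u)\,dx\), as claimed.

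Finally, since \(\mathcal{Q}(u) = \mu > 0\), \(u \not\equiv 0\) and so \(D\mathcal{Q}(u) \neq 0\) in \(H^{-\frac{s}{2}}(\R)\). The constraint manifold \(\{\mathcal{Q} = \mu\}\) is therefore smooth near \(u\), and the Lagrange multiplier theorem yields a real \(\nu\) with \(D\mathcal{E}(u) = \nu D\mathcal{Q}(u)\), i.e.\ \(Mu - N(u,u) = \nu u\), which is \cref{eq:solitary_wave}. The only mild obstacle is the computation of \(D\mathcal{N}(u)\); it is precisely the cyclic symmetry of \(n\) imposed by \cref{assump:N} that makes it equal to \(N(u,u)\) itself rather than to its cyclic symmetrization, confirming the necessity of \cref{eq:cyclic_symmetry} for the variational formulation.
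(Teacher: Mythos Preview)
Your proposal is correct and follows the same approach as the paper: the paper simply asserts (in the sentence preceding the lemma) that the Fréchet derivatives are \(Mu\), \(u\), and \(N(u,u)\), and invokes the Lagrange multiplier principle, deferring the verification of \(D\mathcal{N}(u)=N(u,u)\) via cyclic symmetry to \cref{sec:symmetry}. Your argument fills in exactly these details in place, using \cref{lem:B_upper} for continuity of the trilinear form and the change-of-variables computation that the appendix carries out more generally.
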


\subsection{Properties and regularity of solutions.}
We end \cref{sec:concentration_compactness} by showing that solutions \(u\) satisfy \(u\in H^{\infty}(\R)\), and that the wave speed \(\nu\) is subcritical.  That will conclude the proof of the first part of \cref{thm:main}.
\begin{lemma}[Subcritical wave-speed]
    Any minimizer of \(\Gamma_{\mu}\) solves \cref{eq:solitary_wave} with subcritical wave speed \({\nu<m(0)}\).\label{lem:wave_speed_all}
\end{lemma}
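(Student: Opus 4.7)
The plan is to derive a Pohozaev-type identity by testing the solitary-wave equation against the minimizer itself, and then combine the identity with the strict upper bound for \(\Gamma_\mu\) from \cref{lem:Gamma_bounds} and the strict positivity of \(\mathcal{N}(u)\) from \cref{lem:normest}.

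First, by \cref{lem:solutions}, a minimizer \(u\in H^{\frac{s}{2}}(\R)\) of \(\Gamma_\mu\) satisfies \(-\nu u + Mu - N(u,u) = 0\) for some Lagrange multiplier \(\nu\). Pairing this identity with \(u\) in \(L^2(\R)\) and using the definitions of \(\mathcal{M}, \mathcal{N}, \mathcal{Q}\), together with the cyclic symmetry of \(n\) that yields \(\int_\R u N(u,u)\,dx = 3\mathcal{N}(u)\), one obtains
\begin{equation*}
    -2\nu \mu + 2\mathcal{M}(u) - 3\mathcal{N}(u) = 0.
\end{equation*}
Since \(\mathcal{E}(u) = \mathcal{M}(u) - \mathcal{N}(u) = \Gamma_\mu\), substituting \(\mathcal{M}(u) = \Gamma_\mu + \mathcal{N}(u)\) rearranges this to
\begin{equation*}
    \nu = \frac{\Gamma_\mu}{\mu} - \frac{\mathcal{N}(u)}{2\mu}.
\end{equation*}

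Next, I would invoke \cref{lem:Gamma_bounds}, which gives \(\Gamma_\mu < m(0)\mu\), and hence \(\Gamma_\mu/\mu < m(0)\). To conclude the strict inequality, it suffices to show \(\mathcal{N}(u) > 0\). This is where \cref{lem:normest} enters: any minimizing sequence \(\{u_n\}_{n\in\N}\) (after passing to the subsequence given by \cref{lem:minimizer}) satisfies \(\mathcal{N}(u_n) \geq \delta > 0\), and in the proof of \cref{lem:minimizer} we already established \(\mathcal{N}(u_n)\to\mathcal{N}(u)\) along the converging subsequence. Passing to the limit yields \(\mathcal{N}(u) \geq \delta > 0\), so
\begin{equation*}
    \nu \leq \frac{\Gamma_\mu}{\mu} - \frac{\delta}{2\mu} < m(0),
\end{equation*}
which is precisely the subcriticality claim.

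There is no real obstacle here; the only mild subtlety is ensuring that the functional \(u\mapsto\int_\R u N(u,u)\,dx\) genuinely yields the factor \(3\mathcal{N}(u)\) when paired with \(u\), but this is immediate from the cyclic symmetry \eqref{eq:cyclic_symmetry} already built into \cref{assump:N} (and is exactly the content that justifies \(\mathcal{N}'(u)=N(u,u)\) in the Lagrange-multiplier step). Everything else is a direct consequence of the already-proved bounds.
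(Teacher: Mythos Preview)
Your proof is correct and follows essentially the same approach as the paper: test the Euler--Lagrange equation against \(u\) to obtain \(-2\nu\mu + 2\mathcal{M}(u) - 3\mathcal{N}(u) = 0\), then combine \(\mathcal{E}(u)=\Gamma_\mu<m(0)\mu\) from \cref{lem:Gamma_bounds} with \(\mathcal{N}(u)>0\) from \cref{lem:normest} to conclude \(\nu<m(0)\). Your added care in passing from the sequence bound \(\mathcal{N}(u_n)\geq\delta\) to the limit \(\mathcal{N}(u)\geq\delta\) via the convergence in \cref{lem:minimizer} is a nice clarification that the paper leaves implicit.
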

\begin{proof}
    By \cref{lem:solutions}, a minimizer \(u\) of \(\Gamma_{\mu}\) solves the solitary wave equation
    \begin{equation*}
        -\nu u + M(u) - N(u,u) = 0,
    \end{equation*}
    where \(\nu\) is the wave speed. Multiplying by \(u\) and integrating gives
    \begin{equation*}
        - 2\nu \mu + 2\mathcal{M}(u)- 3\mathcal{N}(u) = 0,
    \end{equation*}
    which implies that
    \begin{equation*}
    \nu = \frac{\mathcal{E}(u) - \frac{1}{2}\mathcal{N}(u)}{\mu}<m(0),\label{quad_c_start} 
    \end{equation*}
    since {\(\mathcal{E}(u)<m(0)\mu\)} by \cref{lem:Gamma_bounds} and \(\mathcal{N}(u)>0\) by \cref{lem:normest}.
\end{proof}

To show the regularity of the solutions, we use a fractional product rule:
\begin{proposition}[\!\!{\cite[Theorem 4.6.1/1]{runst2011}}]
    Assume that 
    \begin{equation*}
    t_1 \leq t_2 \quad \text{and} \quad t_1 + t_2 > 0.
    \end{equation*}
    Furthermore, let \(f\in H^{t_1}(\R), g\in H^{t_2}(\R)\). 
    \begin{enumerate}[label = \emph{(\roman*)}]
    \item Let \(t_2> 1/2\). Then
    \[
    \norm{fg}_{H^{t_1}}\lesssim \norm{f}_{H^{t_1}}\norm{g}_{H^{t_2}}.
    \]
    \item Let \(t_2 <1/2\). Then
    \[
    \norm{fg}_{H^{t_1 + t_2 - 1/2}} \lesssim \norm{f}_{H^{t_1}}\norm{g}_{H^{t_2}}.
    \]
    \end{enumerate}
    \label{prop:product_rule}
    \end{proposition}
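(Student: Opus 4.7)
The statement is attributed to Runst--Sickel \cite[Theorem 4.6.1/1]{runst2011}, so in a polished paper I would either cite that directly or sketch the paraproduct-based proof common to such fractional Leibniz rules. My plan is the latter: decompose \(fg\) using Bony's paraproduct \(fg = T_f g + T_g f + R(f,g)\), where, with Littlewood--Paley projections \(\Delta_j\) and partial sums \(S_j=\sum_{k<j}\Delta_k\), I set \(T_f g = \sum_k S_{k-3}(f)\,\Delta_k g\) (symmetrically for \(T_g f\)), while \(R(f,g) = \sum_{|j-k|\leq 2}\Delta_j f\,\Delta_k g\) gathers the diagonal ``high--high'' interactions. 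Each piece has tightly controlled frequency support, allowing dyadic estimates via Bernstein's inequality together with the Littlewood--Paley characterization of \(H^{s}(\R)\).

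For part \emph{(i)}, the key input is the strict Sobolev embedding \(H^{t_2}(\R)\hookrightarrow L^{\infty}(\R)\) available from \(t_2>\tfrac{1}{2}\). The paraproduct \(T_g f\) puts frequencies at scale \(2^k\) inherited from \(f\), giving the dyadic estimate \(\|2^{k t_1}\Delta_k(T_g f)\|_{L^2}\lesssim \|S_{k-3}g\|_{L^{\infty}}\|\Delta_k f\|_{L^2}\), which sums in \(\ell^2\) to \(\|T_g f\|_{H^{t_1}}\lesssim \|g\|_{H^{t_2}}\|f\|_{H^{t_1}}\). The symmetric term \(T_f g\) carries frequencies inherited from \(g\); using \(t_1\leq t_2\) one shifts derivatives from \(g\) onto the multiplicative factor \(S_{k-3}f\) without breaking summability. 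The remainder \(R(f,g)\) is estimated by first bounding each diagonal term in \(L^1\) by \(\|\Delta_j f\|_{L^2}\|\Delta_k g\|_{L^2}\) (or via Young on the dyadic sums), and then invoking the strict inequality \(t_1+t_2>0\) to secure geometric decay of the tail in the frequency sum.

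For part \emph{(ii)}, with \(t_2<\tfrac{1}{2}\) the embedding into \(L^{\infty}\) fails; one only has \(H^{t_2}(\R)\hookrightarrow L^{p}(\R)\) with \(\tfrac{1}{p}=\tfrac{1}{2}-t_2\). Repeating the paraproduct analysis, every appearance of \(\|\cdot\|_{L^{\infty}}\) is replaced by \(\|\cdot\|_{L^{p}}\), and passing back to \(L^{2}\) on a dyadic block of size \(2^k\) through H\"older and Bernstein costs exactly \(\tfrac{1}{2}-t_2\) derivatives per block. This is precisely the loss reflected in the target index \(t_1+t_2-\tfrac{1}{2}\), and the dyadic sums converge under the same condition \(t_1+t_2>0\).

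The main obstacle in writing this out in full detail is the careful bookkeeping of the high--high remainder \(R(f,g)\) and the verification that the strict inequality \(t_1+t_2>0\) (as opposed to \(\geq 0\)) is enough to ensure geometric decay when summing the diagonal contributions; this is exactly the technical crux treated in \cite{runst2011}, where the statement is proven in the broader Besov--Triebel--Lizorkin framework that subsumes \(H^{s}(\R)=F^{s}_{2,2}(\R)\). In our application it suffices to cite the result, as both \(M\) and \(N\) place us well inside the regime where the product rule applies.
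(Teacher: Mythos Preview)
Your sketch via Bony's paraproduct decomposition is the standard route to such fractional product estimates and is broadly correct in outline, but note that the paper does not prove this proposition at all: it is stated as a quotation of \cite[Theorem~4.6.1/1]{runst2011} and used as a black box. So there is nothing to compare against; the paper's ``proof'' is simply the citation.

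If you want to include a sketch anyway, what you have written is a fair summary of the mechanism, though a couple of points are slightly imprecise. In the \(T_f g\) term for part~(i) you say one ``shifts derivatives from \(g\) onto \(S_{k-3}f\)''; more accurately, one uses \(t_1\le t_2\) together with the Bernstein-type bound \(\|S_{k-3}f\|_{L^\infty}\lesssim 2^{k(1/2-t_1)_+}\|f\|_{H^{t_1}}\) (or the analogous \(L^p\) version) to control the low-frequency factor, and the gain \(2^{k(t_1-t_2)}\) from measuring \(\Delta_k g\) in \(H^{t_2}\) absorbs any loss. For the remainder \(R(f,g)\), bounding diagonal blocks in \(L^1\) and then lifting to \(H^{t_1}\) requires a Bernstein step that introduces a factor \(2^{j/2}\), and it is the combination \(2^{j(t_1+1/2)}\cdot 2^{-j(t_1+t_2)}\) that must be summable---this is where \(t_1+t_2>0\) enters, and in case~(i) also where \(t_2>1/2\) is used (otherwise the target exponent drops as in~(ii)). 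These are bookkeeping issues rather than gaps, and your closing remark that one may simply cite the result is exactly what the paper does.
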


\begin{lemma}[Regularity of solutions]
    Any solution \(u\in H^{\frac{s}{2}}(\R)\) of \cref{eq:solitary_wave} with \(\mathcal{Q}(u) = \mu\) is also in \(H^{\infty}(\R)\). 
    \label{lem:regularity_all}
\end{lemma}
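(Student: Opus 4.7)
The plan is a standard bootstrap based on rewriting the solitary-wave equation as
\begin{equation*}
    \hat{u}(\xi) = \frac{\widehat{N(u,u)}(\xi)}{m(\xi)-\nu}.
\end{equation*}
By \cref{lem:wave_speed_all} the wave speed satisfies \(\nu<m(0)\), and since \(m\) is non-negative with \(m(\xi)-m(0)\eqsim\jpb{\xi}^s\) for large \(\xi\) (\cref{assump:M}), the symbol \((m(\xi)-\nu)^{-1}\) is bounded from above and comparable to \(\jpb{\xi}^{-s}\) at infinity, with first derivative \(\lesssim\jpb{\xi}^{-s-1}\). Consequently \((M-\nu)^{-1}\) is a Fourier multiplier of order \(-s\), mapping \(H^t(\R)\) boundedly into \(H^{t+s}(\R)\) for every \(t\in\R\).

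The next step is to obtain a bilinear estimate for \(N(u,u)\) of the form
\begin{equation*}
    \norm{N(u,u)}_{H^{t^*}}\lesssim \norm{u}_{H^t}^2
\end{equation*}
for some \(t^*=t^*(t)\) depending on \(t\geq s/2\). Using the separability of \(n\) from \cref{assump:N}, we decompose \(N(u,u)=\sum_{cyc} L_1(L_2u\cdot L_3u)\), where each \(L_i\) has order \(r_i\) and hence maps \(H^\sigma\) into \(H^{\sigma-r_i}\). To estimate \(L_2u\cdot L_3u\) we apply the fractional product rule \Cref{prop:product_rule} with \(f=L_2u\in H^{t-r_2}\) and \(g=L_3u\in H^{t-r_3}\). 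Assuming WLOG \(r_2\leq r_3\), case (i) of the proposition applies when \(t-r_2>\frac{1}{2}\) and gives \(L_2u\cdot L_3u\in H^{t-r_3}\); if instead \(t-r_2\leq \frac{1}{2}\), case (ii) applies (using \(2t-r_2-r_3>0\), which follows from \(t\geq s/2\) and \(r_i<s/2\)) and gives \(L_2u\cdot L_3u\in H^{2t-r_2-r_3-1/2}\). Applying \(L_1\) then yields
\begin{equation*}
    t^*=\min\bigl(t-r_1-\max(r_2,r_3),\ 2t-r_1-r_2-r_3-\tfrac{1}{2}\bigr).
\end{equation*}

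Combining the two steps, whenever \(u\in H^t\) we conclude \(u\in H^{t^*+s}\). To close the bootstrap it remains to verify that \(t^*+s>t\) strictly, for every \(t\geq s/2\). In the first branch this reduces to \(r_1+\max(r_2,r_3)<s\), which holds since each \(r_i<\frac{s}{2}\). In the second branch it reduces to \(t>r_1+r_2+r_3+\frac{1}{2}-s\), and the right-hand side is negative by the assumption \(\sum_{i=1}^3 r_i<s-\frac{1}{2}\). Iterating, we gain a definite positive amount of regularity at each step, reaching arbitrarily high \(t\), and therefore \(u\in H^\infty(\R)\). The main technical nuisance is keeping track of which case of \Cref{prop:product_rule} is active near the initial regularity \(s/2\); once the first few iterations are handled, the argument stabilises in case (i) and the gain becomes at least \(s-\max_{i\neq j}(r_i+r_j)>0\) per iteration.
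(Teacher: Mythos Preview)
Your bootstrap is correct and is essentially the paper's own argument: rewrite $u=(M-\nu)^{-1}N(u,u)$, estimate each cyclic term $L_i(L_ju\cdot L_ku)$ via the fractional product rule \Cref{prop:product_rule}, and iterate using that both branches give a uniformly positive regularity gain thanks to $r_i<\tfrac{s}{2}$ and $\sum r_i<s-\tfrac12$. The only minor slip is the borderline $t-\min(r_j,r_k)=\tfrac12$, where neither case of \Cref{prop:product_rule} applies as stated; the paper fixes this by shaving off a small $\varepsilon_0>0$ from $t_1,t_2$, and you should do the same (note also that your ``WLOG $r_2\le r_3$'' must be read per cyclic term, so the first-branch gain is really $s-\max_{i\neq j}(r_i+r_j)$, as you correctly record at the end).
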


\begin{proof}
    Rewriting \cref{eq:solitary_wave}, we have
    \begin{equation}
    (M-\nu)u = N(u,u).
    \label{eq:reg_start}
    \end{equation}
    Since \(\nu<m(0)\), then \((M-\nu)\colon H^{t}(\R)\to H^{t - s}(\R) \) is an invertible linear operator with continuous inverse \((M-\nu)^{-1}\colon H^{t}(\R)\to H^{t + s}(\R)\). We wish to show that \(N(u,u)\in H^{\alpha_0}(\R)\) for some \(\alpha_0\) which is stricly greater than \(-\frac{s}{2}\). Observe first that 
    \begin{align*}
        \norm{N(u,u)}_{H^{\alpha_0}} &\simeq \norm{\jpb{\xi}^{\alpha_0}\intR \hat{u}(\xi-\eta)\hat{u}(\eta)\sum_{cyc}(n_1({-}\xi)n_2(\xi-\eta)n_3(\eta))\,d\eta}_{L^2}\\
        &\leq \norm{\Lambda^{\alpha_0}L_1(L_2uL_3u)}_{L^2}+\norm{\Lambda^{\alpha_0}L_2(L_3uL_1u)}_{L^2}\\
        &\quad+ \norm{\Lambda^{\alpha_0}L_3(L_1uL_2u)}_{L^2}.
    \end{align*}
    Therefore, we proceed to find an upper bound for terms on the form
    \begin{equation}
        \norm{\Lambda^{\alpha_0}L_i(L_juL_ku)}_{L^2} \simeq \norm{L_juL_ku}_{H^{\alpha_0 + r_i}}.
        \label{eq:reg}
    \end{equation}
    {We consider separately the cases when the smallest of \(r_{k}, r_{j}\) is larger or equal to \(\frac{s-1}{2}\), and smaller than \(\frac{s-1}{2}\).} We will apply \cref{prop:product_rule} to \cref{eq:reg} with \(f=L_ju, g= L_k u\). We assume without loss of generality that \(r_k\leq r_j\). {If \(r_k\geq\frac{s-1}{2}\), we pick some \(0<\varepsilon_{0} <\frac{s}{4}\) and apply \cref{prop:product_rule} (ii) with
    \begin{equation*}
        t_1 = \frac{s}{2} - r_j-\varepsilon_{0},\,\,t_2 = \frac{s}{2} - r_k-\varepsilon_{0}, \text{ and } \alpha_{0,1} = s- \frac{1}{2}-(r_{i} + r_{j} + r_{k})-2\varepsilon_{0}.
    \end{equation*}}
    and if \(r_k< \frac{s-1}{2}\), we apply \cref{prop:product_rule} (i) with 
    \begin{equation*}
        t_1 = r_i + \alpha_{0,2},\,\,t_2= \frac{s}{2} -r_k, \text{ and } \frac{-s}{2}+ r_k - r_i <\alpha_{0,2} \leq \frac{s}{2} - r_j - r_i.
    \end{equation*}
     Observing that {\(t_1 \leq\frac{s}{2} - r_j, t_2 =\frac{s}{2} - r_k\)}, we conclude that
    \begin{equation*}
        \norm{N(u,u)}_{H^{\alpha_0}} \lesssim \norm{u}_{H^{\frac{s}{2}}}^2,
    \end{equation*}
    where {\(\alpha_0 =\min(\alpha_{0,1}, \alpha_{0,2})\).} Combined with \cref{eq:reg_start}, this implies that \((M-\nu)u\) is in \(H^{\alpha_0}(\R)\) which in turn implies \(u\in H^{\alpha_0 + s}(\R)\). {Clearly \({\alpha_{0,1}>-2\varepsilon_{0} >-\frac{s}{2}}\), and \(\alpha_{0,2}\) can be chosen to be stricly greater than \(-\frac{s}{2}\) since \({r_i + r_j <s}\)}. Thus \(\alpha_0 + s\) is strictly bigger than \(\frac{s}{2}\). 
    Now we can repeat the argument with \(\tilde{s} = \alpha_0 + s\) taking the place of \(\frac{s}{2}\) and obtain
    \begin{equation*}
        \norm{N(u,u)}_{H^{\alpha_1}}\lesssim \norm{u}_{H^{\alpha_0 + s}}^{2},  
    \end{equation*}
    for \(\alpha_1 = \min(\alpha_{1,1}, \alpha_{1,2})\), where { 
    \begin{equation*}
        \begin{gathered}
        \alpha_{1,1} = 2\alpha_0 + 2s - \frac{1}{2} - {(r_{i} + r_{j} + r_{k})} - 2\varepsilon_{1} > \alpha_0 + \frac{s}{2}> \alpha_0,\\
        -s-\alpha_0 + r_k - r_i < \alpha_{1,2} \leq \alpha_0 + s - r_j - r_i (>\alpha_0),
        \end{gathered}
    \end{equation*}
    with \(0<\varepsilon_{1} <\frac{\alpha_{0}}{2} + \frac{s}{4}\)}.
    Repeating the procedure, we get in the \(n\)th iteration
    \begin{equation*}
        \norm{N(u,u)}_{H^{\alpha_n}}\lesssim \norm{u}_{H^{\alpha_{n-1} + s}}^2,
    \end{equation*}
    where \(\alpha_n > \alpha_{n-1}\) in each iteration. Continuing like this indefinitely, {and observing that \(\alpha_{n}-\alpha_{n-1}>0\) holds uniformly in \(n\)},  we conclude that \(u\in H^{\infty}(\R)\).
\end{proof}
\section{Estimates for small solutions}
\label{sec:properties} 
By fixing an upper bound \(\mu_0>0\) for \(\mu\), we can estimate the size of the solutions and wave speed. The estimates in this section will depend on \(\mu_0\), but are uniform in \(\mu \in (0,\mu_0)\). We start by showing \cref{thm:main} (i).
\begin{lemma}
    For all \(\mu\in (0,\mu_0)\), a minimizer \(u\) of \(\Gamma_{\mu}\) satisfies
    \begin{equation*}
    \norm{u}_{L^{\infty}}\lesssim \norm{u}_{H^{\frac{s}{2}}}\eqsim \mu^{\frac{1}{2}}.
    \end{equation*}
    \label{lem:size_small}
    \end{lemma}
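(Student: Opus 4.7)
The plan is to split the claim into two independent statements: the size equivalence $\norm{u}_{H^{\frac{s}{2}}} \eqsim \mu^{\frac{1}{2}}$ and the $L^{\infty}$-bound $\norm{u}_{L^{\infty}} \lesssim \norm{u}_{H^{\frac{s}{2}}}$. I will handle them in that order, leaning on \cref{lem:Gamma_bounds,lem:B_upper,lem:wave_speed_all,lem:regularity_all}.

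For the equivalence, the lower bound is the immediate $\norm{u}_{H^{\frac{s}{2}}} \geq \norm{u}_{L^{2}} = \sqrt{2\mu}$. For the matching upper bound I combine $\mathcal{E}(u) = \Gamma_{\mu} < m(0)\mu$ (\cref{lem:Gamma_bounds}) with the bilinear estimate $\abs{\mathcal{N}(u)} \lesssim \mu^{(3-\gamma)/2}\norm{u}_{H^{\frac{s}{2}}}^{\gamma}$ for some $\gamma<2$ from \cref{lem:B_upper}, together with the coercivity $\norm{u}_{H^{\frac{s}{2}}}^{2} \eqsim \mathcal{M}(u) + \mathcal{Q}(u)$. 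These yield
\[
\norm{u}_{H^{\frac{s}{2}}}^{2} \lesssim \mu + \mu^{(3-\gamma)/2}\norm{u}_{H^{\frac{s}{2}}}^{\gamma}.
\]
A Young inequality applied to the nonlinear term (with conjugate exponents $2/\gamma$, $2/(2-\gamma)$) lets me absorb an $\varepsilon$-multiple of $\norm{u}_{H^{\frac{s}{2}}}^{2}$ to the left and leaves a $\mu^{(3-\gamma)/(2-\gamma)}$ remainder. Since $(3-\gamma)/(2-\gamma) > 1$ and $\mu \leq \mu_{0}$, this remainder is $\lesssim \mu$, closing the bound.

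For the $L^{\infty}$-estimate, my plan is to iterate the solitary-wave equation $\hat{u}(\xi) = \widehat{N(u,u)}(\xi)/(m(\xi) - \nu)$ while preserving the $\mu^{1/2}$ scaling. The key intermediate bound is that for any $\tau \geq s/2$,
\begin{equation*}
    \norm{u}_{H^{\tau}}^{2} \lesssim \mu + \norm{N(u,u)}_{H^{\tau - s}}^{2},
\end{equation*}
obtained by splitting $\int \jpb{\xi}^{2\tau}\abs{\hat{u}}^{2}\,d\xi$ into $\abs{\xi} \leq 1$ (bounded by $\norm{u}_{L^{2}}^{2}$) and $\abs{\xi} > 1$, where $\nu < m(0)$ (\cref{lem:wave_speed_all}) combined with \cref{assump:M} gives $m(\xi) - \nu \gtrsim \jpb{\xi}^{s}$ with a constant independent of $\nu$. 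Feeding this into the bilinear estimate $\norm{N(u,u)}_{H^{\alpha_{n}}} \lesssim \norm{u}_{H^{\alpha_{n-1} + s}}^{2}$ built in the proof of \cref{lem:regularity_all} (with the convention $\alpha_{-1} = -s/2$), I conclude by induction $\norm{u}_{H^{\alpha_{n}+s}}^{2} \lesssim \mu + \mu^{2} \lesssim \mu$ at every step. Because the sequence $\alpha_{n}$ grows with a uniform positive gap, after finitely many iterations I reach $\alpha_{N} + s > 1/2$, and the Sobolev embedding $H^{\alpha_{N}+s}(\R) \hookrightarrow L^{\infty}(\R)$ yields $\norm{u}_{L^{\infty}} \lesssim \mu^{1/2}$. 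Combined with $\norm{u}_{H^{\frac{s}{2}}} \gtrsim \mu^{1/2}$ from the first part, this gives the chain in the statement.

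The main obstacle is keeping the $\mu^{1/2}$ scaling stable under this bootstrap. The low-frequency contribution $\norm{u}_{L^{2}}^{2} = 2\mu$ is identical at every step, so the exponent does not degrade; conversely, it is essential that the high-frequency bound $m(\xi) - \nu \gtrsim \jpb{\xi}^{s}$ has a constant \emph{independent} of $\nu$, since $m(0) - \nu \simeq \mu^{\beta}$ vanishes as $\mu \to 0$ by the second estimate of \cref{thm:main}. A naive one-step $H^{s/2+\delta}$ bound using $(M-\nu)^{-1}$ globally would cost a factor of $\mu^{-\beta}$ and break uniformity; splitting the frequency is what circumvents this difficulty.
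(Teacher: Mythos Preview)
Your proof is correct, and the overall plan matches the paper's: for $\norm{u}_{H^{s/2}}\eqsim\mu^{1/2}$ you use the same inputs (\cref{lem:Gamma_bounds}, \cref{lem:B_upper}, coercivity), closing via Young's inequality where the paper instead first bootstraps to $\mathcal{N}(u)\lesssim\mu^{3/2}$ and then reads off the norm bound. Both routes hinge on $\gamma<2$ and $\mu\leq\mu_0$, and are equivalent in difficulty; the paper's variant has the minor side benefit of recording $\mathcal{N}(u)\lesssim\mu^{3/2}$, which it reuses in \cref{lem:size_B_small}.

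For the $L^\infty$-bound the paper takes a slightly different device to obtain $\nu$-uniformity: it rewrites the equation as $(M-\nu+1)u=N(u,u)+u$ and observes that $\sup_\xi \jpb{\xi}^s/(m(\xi)-\nu+1)$ is bounded independently of $\nu<m(0)$, so $(M-\nu+1)^{-1}$ is uniformly bounded $H^t\to H^{t+s}$. Your frequency-splitting argument---treating $\abs{\xi}\leq 1$ directly by $\norm{u}_{L^2}^2=2\mu$ and using $m(\xi)-\nu\geq m(\xi)-m(0)\gtrsim\jpb{\xi}^s$ for $\abs{\xi}>1$---achieves the same uniformity without modifying the operator and keeps the $\mu$-scaling particularly transparent at each iteration ($\mu+\mu^2\lesssim\mu$). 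The paper's route gives $\norm{u}_{H^{\alpha_n+s}}\lesssim \norm{u}_{H^{s/2}}^{2^{n+1}}+\norm{u}_{H^{s/2}}$ and then uses $\mu\leq\mu_0$ once at the end. Both are clean; yours is arguably more direct here, while the operator-shift trick is a standard move that generalizes well.
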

    \begin{proof}
    We begin by finding a crude upper bound for \(\mathcal{N}(u)\). Recall that \(\mathcal{N}(u)>0\) for any minimizer \(u\) of \(\Gamma_{\mu}\), see \cref{lem:normest}. For some \({\gamma \in (0,2)}\), \cref{lem:B_upper} gives that
    \begin{equation*}
    \mathcal{N}(u) \lesssim \mu^{\frac{3-\gamma}{2}}\norm{u}_{H^{\frac{s}{2}}}^{\gamma}
    \lesssim  \mu^{\frac{3-\gamma}{2}} (\mu + \mathcal{M}(u)-m(0)\mu)^{\frac{\gamma}{2}}
    \lesssim \mu^{\frac{3}{2}} + \mu^{\frac{3-\gamma}{2}}\mathcal{N}(u)^{\frac{\gamma}{2}}.
    \end{equation*}
    {Thus \(\mathcal{N}\) must at least be bounded above by a multiple of one of the terms on the right-hand side, that is either
    \begin{equation*}
        \mathcal{N}(u)\lesssim \mu^{\frac{3}{2}} \quad \text{or} \quad \mathcal{N}(u)\lesssim \mu^{\frac{3-\gamma}{2}}\mathcal{N}(u)^{\frac{\gamma}{2}}.
    \end{equation*}
    Dividing the second equation by \(\mathcal{N}(u)^{\frac{\gamma}{2}}\) and taking the \((\frac{2-\gamma}{2})\)th root, and then recombining the two equations leads to 
    }
    \begin{equation*}
    \mathcal{N}(u) \lesssim \mu^{\frac{3}{2}} + \mu^{\frac{3-\gamma}{2-\gamma}} = \mu^{\frac{3}{2}} + \mu^{1 + \frac{1}{2-\gamma}} \lesssim \mu^{\frac{3}{2}}.
    \end{equation*}
    {In the last estimate, we also used that \(\gamma\in (0,2)\) so that  \(1+\frac{1}{2- \gamma}>\frac{3}{2}\).}
    The estimate for \(\norm{u}_{H^{\frac{s}{2}}}\) now follows:
    \begin{equation*}
    \mu \lesssim \norm{u}_{L^{2}}^{2}\lesssim \norm{u}_{H^{\frac{s}{2}}}^{2}\eqsim \mu + \mathcal{M}(u)= \mu + \mathcal{E}(u)+ \mathcal{N}(u) \lesssim \mu. 
    \end{equation*}  
{To show that \(\norm{u}_{L^{\infty}}\lesssim \norm{u}_{H^{\frac{s}{2}}}\) we will use a similar bootstrap argument as in the proof of \cref{lem:regularity_all}. However, we need to ensure that the estimates are uniform in \(\mu\in (0, \mu_{0})\) and we follow the same approach as in \cite{maehlen2020}. \Cref{eq:solitary_wave} can be rewritten as 
\begin{equation}
    (M-\nu + 1)u= N(u,u) + u. \label{eq:rewrite_small}
\end{equation}
Since \(m(0)-\nu + 1>1\), \((M-\nu + 1)^{-1}\) exists and is a linear operator from \(H^t( \mathbb{R})\) to \(H^{t+s}(\mathbb{R})\). Moreover, the operator norm is uniformly bounded in \(\mu\) since
\begin{equation*}
    \left\|(M-\nu + 1)^{-1} \right\|_{H^{t}\to H^{t+s}} = \sup_{\xi\in \mathbb{R}} \frac{\left\langle \xi \right\rangle^s}{m(\xi) - \nu +1} \leq \sup_{\xi \in \mathbb{R}} \frac{\left\langle \xi \right\rangle^s}{m(\xi) -m(0) + 1} \coloneqq C_{0}.
\end{equation*}
In the proof of \cref{lem:regularity_all}, we showed that there is \(\alpha_{0}>-\frac{s}{2}\) such that \(\left\|N(u,u) \right\|_{H^{\alpha_{0}}}\lesssim \left\|u \right\|_{H^{\frac{s}{2}}}^2\). Since \(\alpha_{0}\leq\frac{s}{2}\) (check \(\alpha_{0,1}\)), we must also have that 
\begin{equation*}
    \left\|N(u,u) + u \right\|_{H^{\alpha_{0}}}\lesssim \left\|u \right\|_{H^{\frac{s}{2}}}^2 + \left\|u \right\|_{H^{\frac{s}{2}}}.
\end{equation*}
Note that the implicit constants are independent of \(\mu\). Analogously to the proof of \cref{lem:regularity_all}, we apply the operator \((M-\nu +1)^{-1}\) to \(N(u,u)+u\), and obtain in the \(n\)-th iteration
\begin{align*}
    &\norm{u}_{H^{\alpha_n + s}}\leq C_{0}\norm{N(u,u)+u}_{H^{\alpha_n}}\lesssim C_{0}(\norm{u}_{H^{\alpha_{n-1}+s}}^2 +\norm{u}_{H^{\alpha_{n-1}+s}}) \\
    &\lesssim \ldots \leq C(C_{0},n)(\norm{N(u,u) + u}_{H^{\alpha_0}}^{2^n} + \norm{N(u,u) + u}_{H^{\alpha_0}})\\
     &\lesssim C(C_{0},n)(\norm{u}_{H^{\frac{s}{2}}}^{2^{n+1}} + \norm{u}_{H^{\frac{s}{2}}}).
\end{align*}
Here, \(C(C_{0},n)\) is a constant depending on \(C_{0}\) and \(n\). Since \(\alpha_{n}-\alpha_{n-1}>0\) uniformly in \(n\), we can find \(m<\infty\) such that \(\alpha_{m}+s> \frac{1}{2}\), so that 
\begin{equation*}
    \left\|u \right\|_{L^{\infty}}\lesssim \left\|u \right\|_{H^{\alpha_{m} + s}} \lesssim \norm{u}_{H^{\frac{s}{2}}}^{2^{m+1}} + \norm{u}_{H^{\frac{s}{2}}},
\end{equation*}
where the implicit constants may depend on \(C_{0}, s, m, r_{i}\), but are independent of \(\mu\). Observing that 
\begin{equation*}
    \norm{u}_{H^{\frac{s}{2}}}^{2^{m+1}} + \norm{u}_{H^{\frac{s}{2}}} \lesssim \norm{u}_{H^{\frac{s}{2}}} 
\end{equation*}
when \(\mu <\mu_{0}\) concludes the proof.}
\end{proof}

To show \cref{thm:main} (ii), we first find an improved upper bound for \(\Gamma_{\mu}\). 
\begin{lemma}
Let \(\beta = \frac{s'}{2s'-1}\). There is a \(\kappa>0\) such that for all \(\mu\in (0,\mu_0)\),
\[
\Gamma_{\mu} <m(0)\mu - \kappa \mu^{1+\beta}.
\]
\label{lem:gamma_upper_small}
\end{lemma}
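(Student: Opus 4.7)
The plan is to revisit the upper-bound construction in the proof of \cref{lem:Gamma_bounds} and sharpen it by optimizing the long-wave scaling parameter \(t\) as a function of \(\mu\). I would fix once and for all a Schwartz function \(\phi_0\) satisfying \(\mathcal{Q}(\phi_0) = 1\), \(\hat{\phi}_0 \geq 0\), and \(\hat{\phi}_0(\xi) \gtrsim 1\) for \(\abs{\xi} \leq 1\). For each \(\mu \in (0,\mu_0)\), I would then set \(\phi := \sqrt{\mu}\,\phi_0\), so that \(\mathcal{Q}(\phi) = \mu\), and use the same long-wave ansatz \(\phi_t(x) = \sqrt{t}\,\phi(tx)\) as before.

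Redoing the two core estimates of \cref{lem:Gamma_bounds} while tracking the \(\mu\)-dependence explicitly---each appearance of \(\hat{\phi} = \sqrt{\mu}\,\hat{\phi}_0\) contributes a factor of \(\sqrt{\mu}\)---I expect to obtain, for all \(t \in (0, t_0)\) with \(t_0\) the threshold from the earlier argument (depending only on \(\phi_0, n\)),
\begin{equation*}
\mathcal{M}(\phi_t) - m(0)\mu \;\leq\; C_1\, \mu\, t^{s'}, \qquad \mathcal{N}(\phi_t) \;\geq\; c_2\, \mu^{3/2}\, t^{1/2},
\end{equation*}
with constants \(C_1, c_2 > 0\) depending only on \(\phi_0, m, n\). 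These are precisely the bounds from \cref{lem:Gamma_bounds}, now with the \(\mu\)-dependence made explicit by the quadratic (respectively cubic) homogeneity of the two functionals in \(\phi\).

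To balance the two contributions, I would then choose \(t = \varepsilon\, \mu^{1/(2s'-1)}\) for a small parameter \(\varepsilon > 0\) to be fixed shortly. Using \(\beta = s'/(2s'-1)\) and the identity \(\tfrac{3}{2} + \tfrac{1}{2(2s'-1)} = 1 + \beta\), a direct computation gives \(\mu t^{s'} = \varepsilon^{s'} \mu^{1+\beta}\) and \(\mu^{3/2}t^{1/2} = \varepsilon^{1/2}\mu^{1+\beta}\), so
\begin{equation*}
\Gamma_\mu \;\leq\; \mathcal{E}(\phi_t) \;\leq\; m(0)\mu \;-\; \bigl(c_2\, \varepsilon^{1/2} - C_1\, \varepsilon^{s'}\bigr)\mu^{1+\beta}.
\end{equation*}
Since \(s' > 1/2\) by \cref{assump:M}, one has \(\varepsilon^{s'} = o(\varepsilon^{1/2})\) as \(\varepsilon \downarrow 0\), so I can fix \(\varepsilon\) so small that simultaneously \(c_2\varepsilon^{1/2} - C_1 \varepsilon^{s'} > 0\) and \(\varepsilon\, \mu_0^{1/(2s'-1)} < t_0\) (the latter ensuring \(t < t_0\) for all \(\mu \in (0,\mu_0)\), so the estimates above apply). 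Setting \(\kappa := c_2\varepsilon^{1/2} - C_1 \varepsilon^{s'}\) then yields the claim, uniformly in \(\mu \in (0,\mu_0)\).

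The only real work is to verify the explicit \(\mu\)-scaling in the two basic estimates, which is bookkeeping rather than a new obstacle, since the proof of \cref{lem:Gamma_bounds} already isolates the \(t\)-dependence of each Fourier integral and the \(\mu\)-factors simply factor out. The delicate point---and the reason \(s' > 1/2\) enters sharply---is that the balance between the linear loss \(\varepsilon^{s'}\) and the nonlinear gain \(\varepsilon^{1/2}\) is favourable only when \(s' > 1/2\); the exponent \(\beta = s'/(2s'-1)\) is exactly what makes the two terms in the bound on \(\mathcal{E}(\phi_t)\) of the same order in \(\mu\), so that no further scaling is possible.
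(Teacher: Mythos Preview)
Your proposal is correct and follows essentially the same approach as the paper: both use the scaled test function \(\sqrt{\mu t}\,\phi_0(tx)\), reproduce the bounds \(\mathcal{M}-m(0)\mu\le C_1\mu t^{s'}\) and \(\mathcal{N}\ge C_2\mu^{3/2}t^{1/2}\), and then optimize by taking \(t\) proportional to \(\mu^{1/(2s'-1)}\); your parameter \(\varepsilon\) corresponds to the paper's \(C_3^{1/s'}\), and the resulting \(\kappa=c_2\varepsilon^{1/2}-C_1\varepsilon^{s'}\) coincides with the paper's \(C_2C_3^{1/(2s')}-C_1C_3\).
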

\begin{proof}
Pick a function \(\phi \in \Sc(\R)\) satisfying \(\mathcal{Q}(\phi) = 1\), \(\hat{\phi}(\xi)\geq 0\) for all \(\xi\in\R\) and \(\hat{\phi}\gtrsim 1\) for \(\abs{\xi}<1\). Define
\[
\phi_{\mu,t} = \sqrt{\mu t} \phi(tx),
\] 
for \(t \in (0,1)\) and observe that \(\mathcal{Q}(\phi_{\mu, t}) = \mu\). In a similar manner as in the proof of \cref{lem:Gamma_bounds} we find that the following holds for some constants \(C_1, C_2 >0\)
\begin{align*}
\mathcal{M}(\phi_{\mu,t}) & \leq m(0)\mu + C_1\mu t^{s'},\\
\mathcal{N}(\phi_{\mu,t}) &\geq C_2 \mu^{\frac{3}{2}}t^{\frac{1}{2}}.
\end{align*}
Since \(\mu\in(0,\mu_0)\), we can set \(t^{s'} = C_3 \mu^{\beta}\) and pick \(C_3\) small enough to guarantee \(t<1\). Combined with the above, then
\begin{align*}
\mathcal{E}(\phi_{\mu,t}) &\leq m(0)\mu + C_1 \mu t^{s'} - {C_2\mu^{\frac{3}{2}}t^{\frac{1}{2}}}\\
&= m(0)\mu - \mu^{1+\beta}(C_2C_3^{\frac{1}{2s'}}\mu^{\frac{1}{2} + \frac{\beta}{2s'} - \beta}-C_1C_3)\\
&=  m(0)\mu - \kappa \mu^{1+\beta}.
\end{align*}
In the last line, we used that \(\frac{1}{2} + \frac{\beta}{2s'} - \beta = 0\) by the definition of \(\beta\) and set \(\kappa = C_2C_3^{1/(2s')} - C_1C_3\).
Since \(\frac{1}{2s'} <1\), we can always pick \(C_3\) small enough to guarantee that \(\kappa >0\) which concludes the proof.
\end{proof}

Inspired by \cite{maehlen2020}, we decompose \(u\) into high- and low-frequency components: Let \(\chi\) be the characteristic function on \([-1,1]\) and set \({\hat{u}_1 = \chi\hat{u},}\) \({\hat{u}_2 =  (1-\chi)\hat{u}}\). 
This decomposition allows us to estimate precisely the size of \(\mathcal{N}(u)\).

\begin{lemma}
Let \(\beta = \frac{s'}{2s'-1}\). For all \(\mu\in (0,\mu_0)\), near minimizers satisfy
\begin{equation*}
\mathcal{\mathcal{N}}(u) \eqsim \mu^{1+\beta}.
\end{equation*}
\label{lem:size_B_small}
\end{lemma}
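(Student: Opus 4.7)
The plan is to prove the two bounds separately. The lower bound $\mathcal{N}(u) \gtrsim \mu^{1+\beta}$ is the easy direction, which I would handle first. Since $m(\xi) \geq m(0)$ everywhere, one has $\mathcal{M}(u) \geq m(0)\mu$, and for a near-minimizer $u$ the improved upper bound from \cref{lem:gamma_upper_small} gives $\mathcal{E}(u) \leq \Gamma_\mu + o(\mu^{1+\beta}) \leq m(0)\mu - \tfrac12\kappa\mu^{1+\beta}$. Subtracting yields
\[
\mathcal{N}(u) \;=\; \mathcal{M}(u) - \mathcal{E}(u) \;\geq\; \tfrac{1}{2}\kappa\,\mu^{1+\beta}.
\]

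For the upper bound the plan is to run the same chain of inequalities in reverse together with a careful frequency decomposition. Set $A := \int_{|\xi|<1}|\xi|^{s'}|\hat u|^2\,d\xi$ and $B := \int_{|\xi|\geq 1}|\xi|^{s}|\hat u|^2\,d\xi$; the lower bound in \cref{assump:M} gives $\mathcal{M}(u) - m(0)\mu \gtrsim A + B$, while $\mathcal{E}(u) \leq m(0)\mu$ gives $\mathcal{M}(u)-m(0)\mu \leq \mathcal{N}(u)$, so $A+B \lesssim \mathcal{N}(u)$. In particular $\|u_2\|_{H^{s/2}}^2 \lesssim \mathcal{N}(u)$. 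I would then introduce a secondary frequency cut at the scale $t_0 := \mu^{\beta/s'}$, writing $u_1 = u_{1L}+u_{1M}$ where $\hat u_{1L} = \chi_{[-t_0,t_0]}\hat u$. Chebyshev applied to $A$ gives $\|u_{1M}\|_{L^2}^2 \leq t_0^{-s'}A \lesssim \mu^{-\beta}\mathcal{N}(u)$, and Bernstein's inequality gives $\|u_{1L}\|_{L^\infty} \lesssim t_0^{1/2}\|u_{1L}\|_{L^2} \lesssim \mu^{1/2+\beta/(2s')}$. The arithmetic identity $\tfrac12 + \tfrac{\beta}{2s'} = \beta$ (equivalent to $\beta = s'/(2s'-1)$) is what drives the whole scaling.

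Next I would expand $\mathcal{N}(u) = \int L_1 u\,L_2 u\,L_3 u\,dx$ through $u = u_{1L}+u_{1M}+u_2$, producing $27$ trilinear terms. The pure $u_{1L}$ term is bounded by
\[
\left| \int L_1 u_{1L}\,L_2 u_{1L}\,L_3 u_{1L}\,dx \right| \;\lesssim\; \|u_{1L}\|_{L^\infty}\|u_{1L}\|_{L^2}^2 \;\lesssim\; \mu^{\beta}\cdot\mu \;=\; \mu^{1+\beta},
\]
hitting the target exactly. For every remaining term I would use H\"older together with the $L^p$-boundedness of the symbols $n_i(\xi)/\jpb{\xi}^{r_i}$ available from the proof of \cref{lem:B_upper}, combined with the two control bounds above; Young's inequality then absorbs any factor $\mathcal{N}(u)^\theta$ with $\theta<1$ into $\tfrac12\mathcal{N}(u)$ at the cost of a power of $\mu$ that the choice $t_0=\mu^{\beta/s'}$ guarantees is at least $\mu^{1+\beta}$.

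The main obstacle I anticipate lies in the trilinear contributions containing two or three factors from $\{u_{1M},u_2\}$, because the bound $\|u_{1M}\|_{L^2}^2 \lesssim \mu^{-\beta}\mathcal{N}(u)$ carries a negative power of $\mu$, so a naive H\"older yields $C\cdot\mathcal{N}(u)$ with a constant $C$ of order one that Young's inequality cannot directly absorb. To close this gap one has to interpolate more carefully — either by using Bernstein at the sub-scale $t_0$ with $L^p$ exponents strictly between $2$ and $\infty$, or by iterating from the crude bound $\mathcal{N}(u)\lesssim\mu^{3/2}$ obtained in \cref{lem:size_small} to progressively improve the estimate on $\|u_{1M}\|_{L^2}$. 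Once this is done, the critical balance at $t_0 = \mu^{\beta/s'}$ guarantees that every non-leading contribution is either strictly lower order than $\mu^{1+\beta}$ or absorbable into $\tfrac12\mathcal{N}(u)$, completing the argument.
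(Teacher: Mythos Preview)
Your lower bound is essentially the paper's argument, so no issue there.

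For the upper bound you take a genuinely different route. The paper does \emph{not} introduce the secondary cut at $t_0=\mu^{\beta/s'}$ and does not expand into $27$ terms. Instead it keeps only the two pieces $u_1,u_2$ (cut at $|\xi|=1$) and replaces your hard Bernstein/Chebyshev splitting by Gagliardo--Nirenberg interpolation:
\[
\|u_1\|_{H^{r_i}_{q_i}}\lesssim \|u_1\|_{L^2}^{1-\frac{2}{s'}(\frac12-\frac1{q_i})}\|u_1\|_{\dot H^{s'/2}}^{\frac{2}{s'}(\frac12-\frac1{q_i})}
\lesssim \mu^{\frac12-\frac{1}{s'}(\frac12-\frac1{q_i})}\,\mathcal N(u)^{\frac{1}{s'}(\frac12-\frac1{q_i})},
\]
and the same bound for $u_2$ via $\|u_2\|_{H^{s/2}}\eqsim\|u_2\|_{\dot H^{s/2}}$. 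Multiplying the three factors and using $\sum 1/q_i=1$ gives directly
\[
\mathcal N(u)\lesssim \mu^{\frac32-\frac{1}{2s'}}\,\mathcal N(u)^{\frac{1}{2s'}},
\]
which solves in one algebraic step to $\mathcal N(u)\lesssim\mu^{1+\beta}$.

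The obstacle you flag is real and is precisely what the paper's interpolation avoids. Your plan gives $\|u_{1M}\|_{L^2}^2\lesssim\mu^{-\beta}\mathcal N(u)$ and $\|u_2\|_{H^{s/2}}^2\lesssim\mathcal N(u)$, so any term with two such factors contributes at best $\mu^{1/2}\cdot\mu^{-\beta/2}\mathcal N(u)$ or similar, and since $\mu^{1/2-\beta/2}$ is not small (indeed $\beta>1$), you cannot absorb. Your proposed fixes---finer $L^p$ Bernstein at scale $t_0$, or bootstrapping from $\mathcal N(u)\lesssim\mu^{3/2}$---could be made to work, but the first is essentially reinventing Gagliardo--Nirenberg by hand, and the second would need infinitely many iterations to reach the sharp exponent $1+\beta$. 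The paper's soft interpolation packages the whole frequency scale into a single self-improving inequality and closes immediately; this is the missing idea in your outline.
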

\begin{proof}
The lower bound for \(\mathcal{N}(u)\) is a direct consequence of \cref{lem:gamma_upper_small}:
\begin{equation*}
\mathcal{N}(u) >\mathcal{M}(u) - m(0)\mu + \kappa \mu^{1+\beta}\gtrsim \mu^{1+\beta}.
\end{equation*}

To show the upper bound, recall that \cref{lem:B_upper} ensures the existence of \(q_i\geq 2\) such that \(H^{\frac{s}{2}}(\R)\hookrightarrow H^{r_i}_{q_i}(\R)\) and 
\begin{equation}
    \mathcal{N}(u) \lesssim \norm{u}_{H^{r_1}_{q_1}}\norm{u}_{H^{r_2}_{q_2}}\norm{u}_{H^{r_3}_{q_3}}.\label{eq:est_u1u2}
\end{equation}
Partitioning \(u= u_1 + u_2\) as described just before this lemma, we wish to find appropriate upper bounds for \(\norm{u_1}_{H^{r_i}_{q_i}}\), \(\norm{u_2}_{H^{r_i}_{q_i}}\) and use these to bound \(\mathcal{N}(u)\). 
Observe first that \(\norm{u_2}_{H^{\frac{s}{2}}}\eqsim \norm{u_2}_{\dot{H}^{\frac{s}{2}}}\). For \(u_1\), we have on the other hand that
\begin{equation*}
    {\norm{u_1}_{\dot{H}^{\frac{s'}{2}}}^2} \eqsim \int_{-1}^1 (m(\xi)-m(0)) \abs{\hat{u}}^2\,d\xi \lesssim \mathcal{M}(u) - m(0)\mu
\end{equation*}

Consider first \(\norm{u_1}_{L^{q_i}}\). By the Gagliardo-Nirenberg interpolation inequality,
\begin{align*}
    \norm{u_1}_{L^{q_i}}&\lesssim \norm{u_1}_{L^2}^{1-\frac{2}{s'}(\frac{1}{2}-\frac{1}{q_i})}\norm{u_1}_{\dot{H}^{\frac{s'}{2}}}^{\frac{2}{s'}(\frac{1}{2}-\frac{1}{q_i})}\\
    &\lesssim \mu^{\frac{1}{2}-\frac{1}{s'}(\frac{1}{2}-\frac{1}{q_i})}(\mathcal{M}(u)-m(0)\mu)^{\frac{1}{s'}(\frac{1}{2}-\frac{1}{q_i})}\\
    &\lesssim \mu^{\frac{1}{2}-\frac{1}{s'}(\frac{1}{2}-\frac{1}{q_i})}\mathcal{N}(u)^{\frac{1}{s'}(\frac{1}{2}-\frac{1}{q_i})}.
\end{align*}
If \(r_i \leq 0\), then \(\norm{u_1}_{H^{r_i}_{q_i}}\leq \norm{u_1}_{L^{q_i}}\), whereas if \(r_i\geq 0\), then
\begin{align*}
    \norm{u_1}_{\dot{H}^{r_i}_{q_i}}&\lesssim \norm{u_1}_{L^2}^{1-\frac{2}{s'}(\frac{1}{2}-\frac{1}{q_i} + r_i)}\norm{u_1}_{\dot{H}^{\frac{s'}{2}}}^{\frac{2}{s'}(\frac{1}{2}-\frac{1}{q_i} + r_i)}\\
    &\lesssim \mu^{\frac{1}{2}-\frac{1}{s'}(\frac{1}{2}-\frac{1}{q_i}+ r_i)}(\mathcal{M}(u)-m(0)\mu)^{\frac{1}{s'}(\frac{1}{2}-\frac{1}{q_i}+ r_i)}\\
    &\lesssim \mu^{\frac{1}{2}-\frac{1}{s'}(\frac{1}{2}-\frac{1}{q_i})}\mathcal{N}(u)^{\frac{1}{s'}(\frac{1}{2}-\frac{1}{q_i})}.
\end{align*}
In the last line we used that \(\mathcal{N}(u)\lesssim \mu^{\frac{3}{2}}\lesssim\mu\). In both cases,
\begin{equation*}
    \norm{u_1}_{H^{r_i}_{q_i}}\lesssim \norm{u_1}_{L^{q_i}} + \norm{u_1}_{\dot{H}^{r_i}_{q_i}} \lesssim \mu^{\frac{1}{2}-\frac{1}{s'}(\frac{1}{2}-\frac{1}{q_i})}\mathcal{N}(u)^{\frac{1}{s'}(\frac{1}{2}-\frac{1}{q_i})}.
\end{equation*}

Now consider \(\norm{u_2}_{H^{r_i}_{q_i}}\).  Since \(q_i\) was chosen to ensure that \({\norm{u}_{H^{r_i}_{q_i}}\lesssim \norm{u}_{H^{\frac{s}{2}}}}\) and \(\norm{u}_{H^{\frac{s}{2}}}\eqsim \mu^{1/2}\),
\begin{equation*}
    \norm{u_2}_{H^{r_i}_{q_i}}\!\lesssim \norm{u_2}_{H^{\frac{s}{2}}}\!\lesssim { \norm{u}_{H^{\frac{s}{2}}}^{1 - \frac{2}{s'}(\frac{1}{2}-\frac{1}{q_i})}\norm{u}_{\dot{H}^{\frac{s}{2}}}^{\frac{2}{s'}(\frac{1}{2}-\frac{1}{q_i})}}\!\lesssim \mu^{\frac{1}{2}-\frac{1}{s'}(\frac{1}{2}-\frac{1}{q_i})}\mathcal{N}(u)^{\frac{1}{s'}(\frac{1}{2}-\frac{1}{q_i})}.
\end{equation*}
Inserting these two estimates for \(i=1,2,3\) into \cref{eq:est_u1u2} and keeping in mind that \({\frac{1}{q_1} + \frac{1}{q_2} + \frac{1}{q_3} = 1}\), we find that
\begin{align*}
    \mathcal{N}(u) &\lesssim \norm{u}_{H^{r_1}_{q_1}}\norm{u}_{H^{r_2}_{q_2}}\norm{u}_{H^{r_3}_{q_3}}\\
    &\lesssim \Pi_{i=1,2,3}\,\mu^{\frac{1}{2}-\frac{1}{s'}(\frac{1}{2}-\frac{1}{q_i})}\mathcal{N}(u)^{\frac{1}{s'}(\frac{1}{2}-\frac{1}{q_i})}\\
    &\lesssim \mu^{\frac{3}{2}-\frac{1}{2s'}}\mathcal{N}(u)^{\frac{1}{2s'}}.
\end{align*}
Dividing both sides by \(\mathcal{N}(u)^{\frac{1}{2s'}}\) and taking the \(1 - \frac{1}{2s'}\) root gives the desired estimate: 
\begin{equation*}
    \mathcal{N}(u)\lesssim \mu^{\frac{\frac{3}{2}- \frac{1}{2s'}}{1-\frac{1}{2s'}}} = \mu^{\frac{3s'-1}{2s'-1}} = \mu^{1 + \beta}.
\end{equation*}
\end{proof}

With \cref{lem:size_B_small} in hand, we can finally show \cref{thm:main} (ii).
\begin{lemma}
If \(\mu\in (0,\mu_0)\), then any minimizer of \(\Gamma_{\mu}\) solves \cref{eq:solitary_wave} with wave speed \(\nu\) satisfying 
\begin{equation*}
m(0) - \nu \eqsim  \mu^{\beta},\,\,\beta = \frac{s'}{2s'-1}.\label{eq:wave_speed_small}
\end{equation*}\label{lem:wave_speed_small}
\end{lemma}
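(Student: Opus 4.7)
The strategy is to express $m(0)-\nu$ as an explicit quotient in terms of the functionals $\mathcal{E}$ and $\mathcal{N}$, and then apply the sharp two-sided bound on $\mathcal{N}(u)$ from \cref{lem:size_B_small} together with the refined upper bound on $\Gamma_\mu$ from \cref{lem:gamma_upper_small}.

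First, I would recall the Lagrange-multiplier identity derived in the proof of \cref{lem:wave_speed_all}: testing the solitary-wave equation against $u$ and integrating yields
\begin{equation*}
    \nu = \frac{\mathcal{E}(u) - \tfrac{1}{2}\mathcal{N}(u)}{\mu},\qquad\text{so that}\qquad m(0)-\nu \;=\; \frac{\bigl(m(0)\mu - \mathcal{E}(u)\bigr) + \tfrac{1}{2}\mathcal{N}(u)}{\mu}.
\end{equation*}
Both of the bounds in the claim will follow by estimating the numerator above and below.

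For the lower bound, I combine two ingredients. Since $u$ is a minimizer, $\mathcal{E}(u)=\Gamma_\mu$, and \cref{lem:gamma_upper_small} gives $m(0)\mu - \mathcal{E}(u) > \kappa\mu^{1+\beta}$ with $\kappa>0$ uniform in $\mu\in(0,\mu_0)$. Moreover, \cref{lem:size_B_small} provides $\mathcal{N}(u)\gtrsim \mu^{1+\beta}$. Substituting into the formula for $m(0)-\nu$ gives
\begin{equation*}
    m(0)-\nu \;\geq\; \frac{\kappa\mu^{1+\beta} + \tfrac{1}{2}\mathcal{N}(u)}{\mu} \;\gtrsim\; \mu^{\beta},
\end{equation*}
uniformly in $\mu\in(0,\mu_0)$.

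For the upper bound, I use that $\mathcal{M}(u)\geq m(0)\mu$ (since $m(\xi)\geq m(0)$ by \cref{assump:M}), which gives
\begin{equation*}
    m(0)\mu - \mathcal{E}(u) \;=\; m(0)\mu - \mathcal{M}(u) + \mathcal{N}(u) \;\leq\; \mathcal{N}(u),
\end{equation*}
so that
\begin{equation*}
    m(0)-\nu \;\leq\; \frac{\tfrac{3}{2}\mathcal{N}(u)}{\mu} \;\lesssim\; \mu^{\beta},
\end{equation*}
where the last inequality uses the upper half of \cref{lem:size_B_small}. Since no single step is delicate here—everything is an immediate corollary of the previously established Lagrange identity, the sign of $m-m(0)$, and the sharp estimates on $\mathcal{N}(u)$ and $\Gamma_\mu$—there is no real obstacle; the only place where care is needed is that all implicit constants from \cref{lem:gamma_upper_small,lem:size_B_small} are uniform in $\mu\in(0,\mu_0)$, which was already verified in those lemmas.
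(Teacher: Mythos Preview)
Your proof is correct and follows essentially the same approach as the paper: both start from the identity $\nu = (\mathcal{E}(u)-\tfrac12\mathcal{N}(u))/\mu$, use $\mathcal{M}(u)\geq m(0)\mu$ for the upper bound on $m(0)-\nu$, and the lower bound on $\mathcal{N}(u)$ from \cref{lem:size_B_small} for the lower bound. One minor remark: for the lower bound you invoke both \cref{lem:gamma_upper_small} and the lower half of \cref{lem:size_B_small}, but either one alone already suffices (the paper just uses $\mathcal{E}(u)\leq m(0)\mu$ together with $\mathcal{N}(u)\gtrsim\mu^{1+\beta}$).
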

\begin{proof}
As in the proof of \cref{lem:wave_speed_all}, a minimizer satisfies 
\begin{equation*}
    -2\nu \mu + 2\mathcal{M}(u)- 3\mathcal{N}(u) = 0,
\end{equation*}
where \(\nu\) is the wave speed and depends on \(\mu\). By \cref{lem:normest} \(\mathcal{N}(u)>0\). Combined with the estimated size of \(\mathcal{N}(u)\) in {\cref{lem:size_B_small}}, then
\begin{equation*}
    \begin{split}
\nu &= \frac{\mathcal{E}(u) - \frac{1}{2}\mathcal{N}(u)}{\mu}\\
&\leq m(0) - C_1\mu^{\beta},
    \end{split}
\end{equation*}
for some \(C_1 >0\). On the other hand, 
\begin{equation*}
    \begin{split}
\nu &= \frac{\mathcal{M}(u) - \frac{3}{2}\mathcal{N}(u)}{\mu}\\
&\geq m(0) -  C_2 \mu^{\beta}, 
    \end{split}
\end{equation*}
for some \(C_2>0\). 
\end{proof}

\section{The modified assumption \labelcref{assump:Nstar}}
\label{sec:Bstar}
In this section, we assume that \(N\) satisfies the modified assumption \labelcref{assump:Nstar} instead of \cref{assump:N} and show that \cref{thm:main} still holds. This allows for pseudo-products that cannot nessecarily be reduced to a combination of linear Fourier operators.

A bilinear counterpart to Hörmander--Mikhlin's boundedness criterion for linear operators is Coifman--Meyer's result \cite{coifman1978}, which requires the symbol \(\sigma(\xi, \eta)\) to satisfy 
\begin{equation*}
    \abs{\partial^{\alpha}\sigma(\xi, \eta)}\lesssim \jpb{\abs{\xi} + \abs{\eta}}^{-\abs{\alpha}},
\end{equation*}
for sufficiently many \(\alpha\). Because of the necessary symmetry in \(-\xi, \xi-\eta\) and \(\eta\) for a variational formulation, we instead rely on Corollary 1.4 from \cite{grafakos2020}.
\begin{proposition}[\!\!{\cite[Corollary 1.4]{grafakos2020}}]
    Let \({q \in [1,4)}\) and let \({M_q = \left\lfloor \frac{2}{4-q}\right\rfloor +1}\). Assume that \(m\in L^q(\R^2)\cap C^{M_q}(R^2)\) satisfies
    \begin{equation}
        \norm{\partial^{\alpha}m}_{L^{\infty}} \leq C_0 <\infty \text{ for } \abs{\alpha}\leq M_q.\label{eq:Grafakosreq}
    \end{equation}
    Suppose \(p_1,p_2,p\) satisfy \(2\leq p_1, p_2\leq \infty, 1\leq p\leq 2\) and \(\frac{1}{p}= \frac{1}{p_1} + \frac{1}{p_2}\). Then the bilinear operator \(T_m\) with symbol \(m\) is a bounded operator from \(L^{p_1}(\R)\times L^{p_2}(\R)\) to \(L^p(\R)\). 
    \label{prop:Grafakos}
\end{proposition}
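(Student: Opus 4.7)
The plan is to reduce $T_m$ to a superposition of elementary bilinear Fourier-projection--modulation operators whose boundedness from $L^{p_1}\times L^{p_2}$ to $L^p$ is uniform, following the dyadic-plus-Fourier-series strategy characteristic of bilinear H\"ormander--Mikhlin theorems with limited smoothness. First, fix a smooth dyadic partition of unity $\{\phi_k\}_{k\in\Z}$ on $\R^2$ adapted to annuli $A_k=\{2^{k-1}\le|\xi|+|\eta|<2^{k+1}\}$, and write $m=\sum_k m_k$ with $m_k=\phi_k m$ supported in a product cube $Q_k\supset A_k$ of side length $L_k\sim 2^k$.

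On each $Q_k$, expand $m_k$ as a Fourier series
\[
m_k(\xi,\eta)=\sum_{n\in\Z^2}c_{k,n}\,e^{2\pi i L_k^{-1}(n_1\xi+n_2\eta)}\chi_{Q_k}(\xi,\eta),
\]
and bound the coefficients $c_{k,n}$ in two complementary ways: integration by parts $M_q$ times, using the hypothesis $\|\partial^\alpha m\|_{L^\infty}\le C_0$ for $|\alpha|\le M_q$, yields a smoothness estimate with Fourier decay $(1+|n|)^{-M_q}$; H\"older applied to the defining integral of $c_{k,n}$, using $m\in L^q$, yields an integrability estimate involving $\|m_k\|_{L^q}$. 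Each symbol $e^{2\pi i L_k^{-1}(n_1\xi+n_2\eta)}\chi_{Q_k}$ defines a bilinear multiplier whose action on $(f,g)$ factors as a pointwise product of translates of one-dimensional Fourier projections onto intervals of length $L_k$. After absorbing a smooth truncation into $m_k$, each such projection is a convolution with a Schwartz kernel of uniformly bounded $L^1$-norm, hence uniformly bounded on $L^{p_i}$ for $1\le p_i\le\infty$ by Young's inequality, and H\"older in the physical variable then delivers the desired uniform bilinear bound for every elementary piece.

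Assembling these estimates reduces the proof to the summability bound
\[
\sum_k\sum_n|c_{k,n}|\lesssim C_0+\|m\|_{L^q},
\]
which is where I expect the main difficulty. The two-dimensional $n$-sum demands enough Fourier decay, while the $k$-sum requires, depending on the sign of $k$, either the smoothness bound (dominant for small scales $k\le 0$, where the $L^q$ bound on a small cube is too weak) or the $L^q$ bound combined with the orthogonality-type estimate $\sum_k\|m_k\|_{L^q}^q\lesssim\|m\|_{L^q}^q$ (dominant for large $k$). Interpolating these two bounds on $c_{k,n}$ at the optimal exponent and summing the resulting geometric series is precisely what forces the threshold $M_q>2/(4-q)$ in the hypothesis; the floor $M_q=\lfloor 2/(4-q)\rfloor+1$ is the minimal integer that meets this threshold simultaneously for every admissible triple $(p_1,p_2,p)$. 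Finally, endpoint verifications at $(p,p_1,p_2)=(1,2,2)$ (via Plancherel) and $(2,\infty,2),(2,2,\infty)$ (via Young-type inequalities), combined with bilinear complex interpolation, extend the estimate to the full admissible range.
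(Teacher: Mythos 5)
The paper does not prove \cref{prop:Grafakos}; it is quoted directly from \cite[Corollary 1.4]{grafakos2020}. There is therefore no internal proof to compare against, and your sketch must be judged on its own.

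There is a genuine gap at exactly the step you flag as the crux: the claimed $\ell^1$ summability $\sum_k\sum_n|c_{k,n}|\lesssim C_0+\|m\|_{L^q}$ is false, for two independent reasons. First, take $m$ a fixed compactly supported smooth bump with $m(0)=1$. Since the annulus $A_k$ fills a fixed proportion of the cube $Q_k$, the zeroth coefficient $c_{k,0}=|Q_k|^{-1}\int_{Q_k}m_k$ is $\gtrsim 1$ uniformly in $k\le0$, so the $k$-sum over negative $k$ alone already diverges while $C_0+\|m\|_{L^q}$ is finite. The root cause is that your dyadic decomposition is the natural one for scale-invariant (Coifman--Meyer/Mikhlin) symbols, whereas the hypothesis here, $\|\partial^\alpha m\|_{L^\infty}\le C_0$, carries no $\langle\xi\rangle^{-|\alpha|}$ gain and is not scale-invariant; dyadic blocks therefore give no decay in $k$ at small scales. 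Second, for $q\in[1,2)$ the formula gives $M_q=1$, so the integration-by-parts decay of the Fourier coefficients is only $(1+|n|)^{-1}$ and $\sum_{n\in\Z^2}(1+|n|)^{-1}=\infty$; interpolation with the $L^q$ bound on $c_{k,n}$ cannot rescue this, since $\sum_n\min\bigl(A|n|^{-1},B\bigr)=\infty$ for every $A,B>0$. Thus the inner $n$-sum is already infinite in a regime where the proposition is asserted to hold. Any argument along your lines must therefore give up absolute summation of the elementary pieces: one needs a unit-scale decomposition adapted to the non-scale-invariant derivative bound, together with orthogonality across pieces (square-function or almost-orthogonality estimates) and the $L^q$ condition to control the \emph{number} of significant coefficients rather than the $\ell^1$-norm of all of them. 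This is, in essence, what the wavelet-based argument in \cite{grafakos2020} does, and why the direct Fourier-series-plus-$\ell^1$ route cannot reach the stated range of $q$.
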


Under \cref{assump:Nstar}, \(n\) can be written as
\begin{equation*}
    n(\xi-\eta, \eta) = \sum_{cyc} f(\xi, \xi-\eta, \eta).
\end{equation*}
Introducing a symbol
\begin{equation*}
    \tilde{n}(\xi-\eta, \eta) = \frac{f(\xi, \xi-\eta, \eta)}{\jpb{\xi}^{\tilde{r}_1}\jpb{\xi-\eta}^{\tilde{r}_2}\jpb{\eta}^{\tilde{r}_3}},
\end{equation*}
for fitting exponents \(\tilde{r}_i\), we can rewrite \(\mathcal{N}\) as
\begin{equation}
    \mathcal{N}(u) = \intR \Lambda^{\tilde{r}_1}u \tilde{N}(\Lambda^{\tilde{r}_2}u, \Lambda^{\tilde{r}_3}u)\,dx\label{eq:funcform_Bstar},
\end{equation}
where \(\tilde{N}\) is the bilinear Fourier multiplier with symbol \(\tilde{n}\). Using \cref{prop:Grafakos}, we show that \(\tilde{r}_i\) can be chosen to satisfy the requirements of the original assumption \labelcref{assump:N}, while at the same time guaranteeing that \(\tilde{N}\) is bounded. This will allow us to reduce many of the arguments under the modified assumption to the case already covered.

\begin{lemma}
    Let \(p_1, p_2, p\) be as in \cref{prop:Grafakos}. Suppose \(n\) satisfies \cref{assump:Nstar}. Then there are \(\tilde{r}_1,\tilde{r}_2,\tilde{r}_3\) satisfying the requirements of \cref{assump:N}, that is \(\tilde{r}_i<\frac{s}{2}\),
    \begin{equation*}
        \begin{split}
        \sum_{i=1}^3 \tilde{r}_i &< s-\frac{1}{2},\\
        \tilde{r}_i +\tilde{r}_j &<\frac{3s}{2}-\frac{1}{2} \text{ for } i\neq j,
        \end{split}
    \end{equation*}
    such that the bilinear Fourier multiplier \(\tilde{N}\) with symbol \(\tilde{n}\) is bounded from \(L^{p_1}(\R)\times L^{p_2}(\R)\) to \(L^p(\R)\).
    \label{lem:Bstar_toB}
\end{lemma}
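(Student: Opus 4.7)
The plan is to reduce to the already-treated setting of \cref{assump:N} by choosing $\tilde{r}_i = r_i + a_i$ for small but strictly positive $a_i$, so that the bilinear multiplier $\tilde N$ is controlled via \cref{prop:Grafakos}. First, observe that the gaps in \cref{assump:Nstar} compared to \cref{assump:N} are exactly $\tfrac{1}{4}$, $\tfrac{3}{4}$, and $\tfrac{1}{2}$ for the three families of inequalities. Consequently, any choice satisfying $a_i \in (0, \tfrac{1}{4})$, $a_i + a_j < \tfrac{1}{2}$, and $\sum a_i < \tfrac{3}{4}$ --- for instance $a_i = \tfrac{1}{4} - \varepsilon$ uniformly in $i$ for small $\varepsilon > 0$ --- yields $\tilde{r}_i < \tfrac{s}{2}$, $\sum \tilde{r}_i < s - \tfrac{1}{2}$, and $\tilde{r}_i + \tilde{r}_j < \tfrac{3s-1}{2}$, so the constraints of \cref{assump:N} are met.

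Next, I verify the hypotheses of \cref{prop:Grafakos}. By Leibniz's rule applied to the quotient defining $\tilde{n}$, together with the hypothesis $|\partial^\alpha f| \lesssim \jpb{\xi}^{r_1}\jpb{\xi-\eta}^{r_2}\jpb{\eta}^{r_3}$ and the elementary bound $|\partial^k \jpb{\cdot}^{-\tilde r_i}| \lesssim \jpb{\cdot}^{-\tilde r_i}$, one obtains
\[
|\partial^\alpha \tilde{n}(\xi-\eta, \eta)| \lesssim \jpb{\xi}^{-a_1}\jpb{\xi-\eta}^{-a_2}\jpb{\eta}^{-a_3} \leq 1
\]
for every multi-index $\alpha$. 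Since \cref{assump:Nstar} provides $C^\infty$-regularity, this verifies \eqref{eq:Grafakosreq} for every $M_q$. For the $L^q(\R^2)$ condition the same pointwise estimate reduces matters to the convergence of $\int\!\int \jpb{\xi}^{-qa_1}\jpb{\xi-\eta}^{-qa_2}\jpb{\eta}^{-qa_3}\,d\xi\,d\eta$. Splitting the plane according to which of $|\xi|, |\xi-\eta|, |\eta|$ exceed $1$ produces a main region where all three are large (requiring by a scaling argument $q\sum a_i > 2$) and three tube-like regions where one variable remains bounded (requiring $q(a_i + a_j) > 1$ for the two large indices). With $a_i$ close to $\tfrac{1}{4}$ and $q \in (\tfrac{8}{3}, 4)$, both sets of conditions hold while keeping $q < 4$; the bounded low-frequency region contributes trivially. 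Hence $\tilde{n} \in L^q(\R^2)$, and \cref{prop:Grafakos} yields the desired bilinear bound $L^{p_1}(\R) \times L^{p_2}(\R) \to L^p(\R)$.

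The main obstacle is the simultaneous budgeting of three trade-offs: the $\tilde{r}_i$ must be large enough for $\tilde{n}$ to decay sufficiently to lie in $L^q$, small enough to remain within the \cref{assump:N} constraints, and the index $q$ must stay strictly below $4$. The sharper constants $\tfrac{1}{4}, \tfrac{5}{4}, 1$ in \cref{assump:Nstar} --- as opposed to $0, \tfrac{1}{2}, \tfrac{1}{2}$ in \cref{assump:N} --- are engineered precisely to open the gap needed to close this budget.
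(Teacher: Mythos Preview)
Your proof is correct and follows the same overall template as the paper --- choose $\tilde r_i = r_i + a_i$ with the $a_i$ near $\tfrac14$, verify the \cref{assump:N}-constraints, and invoke \cref{prop:Grafakos} after checking $\tilde n \in L^q(\R^2)$ for some $q<4$. The one genuine difference is the side of $\tfrac14$ on which you place the $a_i$: you take $a_i = \tfrac14-\varepsilon$, whereas the paper takes $a_i = \tfrac14+\varepsilon$. The paper's choice makes $\min_{i\neq j}(a_i+a_j)>\tfrac12$, so it can dominate $|\tilde n|$ by $\jpb{|\xi|+|\eta|}^{-\alpha}$ with $\alpha>\tfrac12$, which lies in $L^q(\R^2)$ for some $q<4$ by a one-line radial computation; your choice forces the more hands-on region-by-region integrability analysis. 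On the other hand, your choice handles cleanly the borderline $r_i = \tfrac{s}{2}-\tfrac14$ permitted by the non-strict inequality in \cref{assump:Nstar}, whereas the paper's argument implicitly reads that inequality as strict.

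One caution: the phrase ``by a scaling argument $q\sum a_i>2$'' only furnishes a \emph{necessary} condition for convergence of the main-region integral, not a sufficient one; sufficiency there also uses the pairwise bounds $q(a_i+a_j)>1$ (equivalently $qa_i<1$ under your symmetric choice, so that the angular integral stays finite). You do state those pairwise bounds, but you attribute them solely to the tube regions. Since you verify both sets of conditions and your range $q\in(\tfrac{8}{3},4)$ with small $\varepsilon$ satisfies them, the conclusion stands; just be aware that the region-to-condition correspondence you sketch is not quite the full story.
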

\begin{proof}
    Pick \(\tilde{r}_1, \tilde{r}_2, \tilde{r}_3\) such that  \(r_i + \frac{1}{4} <\tilde{r}_i < \frac{s}{2}\). This is possible since by assumption \(r_i <\frac{s}{2}-\frac{1}{4}\). By picking each \(\tilde{r}_i\) sufficiently small, i.e. setting \(\tilde{r}_i = r_i + \frac{1}{4} + \varepsilon\) for \(\varepsilon\) sufficiently small, we can also ensure that
    \begin{equation*}
        \sum_{i=1}^3 \tilde{r}_i = \sum_{i=1}^3 (r_i + \frac{1}{4} + \varepsilon) < s-\frac{5}{4} + \frac{3}{4} = s- \frac{1}{2}, 
    \end{equation*}
    and for \(i\neq j\) that
    \begin{equation*}
        \tilde{r}_i + \tilde{r}_j = r_i + r_j + \frac{1}{2} + 2\varepsilon <\frac{3s}{2} - 1 + \frac{1}{2} = \frac{3s}{2} -\frac{1}{2}.
    \end{equation*}
    {Both these inequalities hold as long as \( \varepsilon\) satisfies 
    \[0<\varepsilon<\min\left(\frac{1}{3}\left(s-\frac{5}{4} - \sum r_{i}\right), \frac{1}{2}\left(\frac{3s}{2} - 1 - (r_{i} + r_{j})\right)\right),\] 
    possible since both expressions on the right hand side are strictly positive.}
    Set \(a_i = \tilde{r}_i - r_i, i = 1,2,3\). Then
    \begin{equation*}
        \abs{\tilde{n}(\xi-\eta, \eta)} \lesssim \frac{1}{\jpb{\xi}^{a_1}\jpb{\xi-\eta}^{a_2}\jpb{\eta}^{a_3}},
    \end{equation*}
    and all derivatives \(\tilde{n}\) are bounded due to the growth restriction on the derivatives of \(f\), see \cref{assump:Nstar}. Furthermore, 
    \begin{align*}
        \jpb{\abs{\xi}+\abs{\eta}}^{\min_{i\neq j}(a_i + a_j)} &\lesssim \jpb{\xi}^{a_1 + \min(a_2, a_3)} +  \jpb{\eta}^{a_3 + \min(a_1,a_2)}\\
        &\lesssim \jpb{\xi}^{a_1}\jpb{\xi-\eta}^{a_2}\jpb{\eta}^{a_3}.
    \end{align*}
    Since \(a_i = \tilde{r}_i - r_i >\frac{1}{4}\), then \(\min_{i\neq j}(a_i + a_j)>\frac{1}{2}\) so that
    \begin{equation*}
        \frac{1}{\jpb{\xi}^{a_1}\jpb{\xi-\eta}^{a_2}\jpb{\eta}^{a_3}} \lesssim \frac{1}{\jpb{\abs{\xi}+\abs{\eta}}^{\alpha}},
    \end{equation*}
    for some \(\alpha >\frac{1}{2}\). 

    Now we apply \cref{prop:Grafakos}. The symbol \(\tilde{n}\) clearly satisfies \(\tilde{n}\in C^{\infty}(\R^2)\) and  \cref{eq:Grafakosreq} for any \(M_q\). Furthermore, \(\tilde{n}\in L^q(\R^2)\) for any \(4>q>\frac{2}{\alpha}\) by comparison with 
    \begin{equation*}
        \frac{1}{\jpb{\abs{\xi}+\abs{\eta}}^{\alpha}} \in L^{q}(\R^2).
    \end{equation*}
    Hence \(\tilde{N}\) is bounded from \(L^{p_1}(\R)\times L^{p_2}(\R)\) to \(L^p(\R)\) for \(p_1, p_2, p\) as in \cref{prop:Grafakos}. 
\end{proof}

To prove \cref{thm:main} under the modified assumption \labelcref{assump:Nstar}, we show that each lemma that holds under \cref{assump:N} also holds under \cref{assump:Nstar}. Inspecting the proofs in Sections \ref{sec:functionals} to \ref{sec:properties}, the lemmas of the preceding sections can be divided into three categories: First, those where the proof relies only on properties of \(M\) or results established in previous lemmas. These require no modification. Second, the lemmas where the proof relies on the estimate
\begin{equation}
    \abs{\mathcal{N}(u)}\lesssim \norm{u}_{H^{\tilde{r}_1}_{q_1}}\norm{u}_{H^{\tilde{r}_2}_{q_2}}\norm{u}_{H^{\tilde{r}_3}_{q_3}},\label{eq:qi_ri}
\end{equation}
for appropirate \(q_i\) and where \(\tilde{r}_i\) are in accordance with \cref{assump:N}. This applies to Lemmas \ref{lem:B_upper}, \ref{lem:normest}, \ref{lem:minimizer} and \ref{lem:size_B_small}. The third category are lemmas where the proof relies explicitly on the form of the symbol \(n\) beyond \cref{eq:qi_ri}. This applies to part of Lemmas \ref{lem:Gamma_bounds}, \ref{lem:commutator}, and \ref{lem:regularity_all} and we deal with them separately. 

From the formulation \labelcref{eq:funcform_Bstar} and \cref{lem:Bstar_toB}, we immediately get \cref{eq:qi_ri}.

\begin{lemma}[Upper bound for \(\mathcal{N}(u)\)]
    Let \(u\in H^{\frac{s}{2}}(\R)\). Suppose that \(q_i, {i = 1,2,3}\), satisfy
    \begin{equation*}
        q_1\geq 2 \quad\text{and}\quad \sum_{i=1}^3 \frac{1}{q_i} = 1
    \end{equation*}
    Then there are \(r_i, i = 1,2,3,\) satisfying the requirements of \cref{assump:N}
    such that
    \begin{equation*}
        \abs{\mathcal{N}(u)}\lesssim \norm{u}_{H^{\tilde{r}_1}_{q_1}}\norm{u}_{H^{\tilde{r}_2}_{q_2}}\norm{u}_{H^{\tilde{r}_3}_{q_3}}.
    \end{equation*}
    \label{lem:qi_ri}
\end{lemma}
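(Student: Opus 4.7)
The plan is to combine the representation \cref{eq:funcform_Bstar} with H\"older's inequality and the mapping properties established in \cref{lem:Bstar_toB}. First I would select $\tilde{r}_1, \tilde{r}_2, \tilde{r}_3$ as produced by \cref{lem:Bstar_toB}, which guarantees simultaneously that they meet the requirements of \cref{assump:N} and that the bilinear multiplier $\tilde{N}$ with symbol $\tilde{n}$ is bounded from $L^{p_1}(\R)\times L^{p_2}(\R)$ to $L^{p}(\R)$ for every triple $(p_1,p_2,p)$ lying in the range of \cref{prop:Grafakos}. Since Assumption N is symmetric in the three orders and $n$ is cyclically symmetric, there is no loss in matching a specific $\tilde{r}_i$ with a specific exponent $q_i$.

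Starting from the identity
\[
\mathcal{N}(u) = \int_\R (\Lambda^{\tilde{r}_1}u)\,\tilde{N}(\Lambda^{\tilde{r}_2}u, \Lambda^{\tilde{r}_3}u)\,dx,
\]
I would apply H\"older's inequality with exponents $q_1$ and $q_1' = q_1/(q_1-1)$. Since $q_1 \geq 2$, one has $q_1' \in [1,2]$, and the identity $\tfrac{1}{q_1'} = 1 - \tfrac{1}{q_1} = \tfrac{1}{q_2} + \tfrac{1}{q_3}$ places $(p_1, p_2, p) = (q_2, q_3, q_1')$ within the admissible range required by \cref{prop:Grafakos}. Invoking the boundedness of $\tilde{N}$ from \cref{lem:Bstar_toB} then yields
\[
\norm{\tilde{N}(\Lambda^{\tilde{r}_2}u, \Lambda^{\tilde{r}_3}u)}_{L^{q_1'}} \lesssim \norm{\Lambda^{\tilde{r}_2}u}_{L^{q_2}} \norm{\Lambda^{\tilde{r}_3}u}_{L^{q_3}},
\]
and rewriting $\norm{\Lambda^{\tilde{r}_i}u}_{L^{q_i}} = \norm{u}_{H^{\tilde{r}_i}_{q_i}}$ in each factor delivers the desired bound.

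The main obstacle has already been resolved in \cref{lem:Bstar_toB}, where the Assumption-N growth conditions on the $\tilde{r}_i$'s are reconciled with the $L^q$-integrability of $\tilde{n}$ demanded by \cref{prop:Grafakos}. What remains here is essentially a bookkeeping exercise in H\"older exponents, together with the observation that the cyclic symmetry of $n$ permits us to assign the three orders $\tilde{r}_i$ to the three factors in any order, so the factorisation above is available no matter which index corresponds to the exponent $q_1$. I therefore anticipate no genuine difficulty in completing the argument.
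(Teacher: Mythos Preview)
Your proposal is correct and follows essentially the same route as the paper: select the $\tilde{r}_i$ via \cref{lem:Bstar_toB}, apply H\"older to split off the $L^{q_1}$-factor of $\Lambda^{\tilde r_1}u$, and then invoke the bilinear boundedness of $\tilde{N}$ on the remaining factor with target exponent $q_1' = \tilde q$ satisfying $\tfrac{1}{\tilde q}=\tfrac{1}{q_2}+\tfrac{1}{q_3}\leq 2$. The paper's argument is identical up to notation, and your remark about cyclic symmetry allowing the free assignment of the $\tilde r_i$ to the $q_i$ is a helpful clarification.
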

\begin{proof}
    Pick \(\tilde{r}_1, \tilde{r}_2, \tilde{r}_3\) according to \cref{lem:Bstar_toB}, and let \(\tilde{N}\) be as in \cref{eq:funcform_Bstar}. Then
    \begin{align*}
        \abs{\mathcal{N}(u)} &\lesssim\int_{\R}\abs{\Lambda^{\tilde{r}_1}u\tilde{N}(\Lambda^{\tilde{r}_2}u, \Lambda^{\tilde{r}_3}u)}dx\\
        &\lesssim \norm{\Lambda^{\tilde{r}_1}u}_{L^{q_1}} \norm{\tilde{N}(\Lambda^{\tilde{r}_2}u, \Lambda^{\tilde{r}_3}u)}_{L^{\tilde{q}}}, 
    \end{align*}
    where \(\frac{1}{\tilde{q}} = \frac{1}{q_2} + \frac{1}{q_3}\). Since \(q_1\geq 2\), then \(\tilde{q}\leq 2\) and we apply \cref{lem:Bstar_toB}
    \begin{align*}
        \abs{\mathcal{N}(u)} &\lesssim \norm{\Lambda^{\tilde{r}_1}u}_{L^{q_1}} \norm{\tilde{N}(\Lambda^{\tilde{r}_2}u, \Lambda^{\tilde{r}_3}u)}_{L^{\tilde{q}}}\\
        &\lesssim \norm{\Lambda^{\tilde{r}_1}u}_{L^{q_1}} \norm{\Lambda^{\tilde{r}_2}u}_{L^{q_2}}\norm{\Lambda^{\tilde{r}_3}u}_{L^{q_3}}\\
        &\lesssim \norm{u}_{H^{\tilde{r}_1}_{q_1}} \norm{u}_{H^{\tilde{r}_2}_{q_2}}\norm{u}_{H^{\tilde{r}_3}_{q_3}}.
    \end{align*}
\end{proof}
Observe that we have put no further assumptions on \(q_1,q_2, q_3\) other than those already covered by the requirements in \cref{lem:B_upper}. This means that all variations of the argument in \cref{lem:B_upper} can be repeated in exactly the same manner to arrive at the results of \cref{lem:B_upper,lem:normest,lem:minimizer,lem:size_B_small}.

We now turn to the remaining three lemmas that require modification. In the proof of \cref{lem:Gamma_bounds}, the properties of \(N\) are used to find that \({\mathcal{N}(u)\gtrsim t^{1/2}}\). However, the only properties used are that \(n\) is continuous and strictly positive at the origin, and growth bounds on the absolute value of \(n\). As these properties still hold under \cref{assump:Nstar}, so does the result of the lemma. Proving an equivalent of \cref{lem:commutator} requires more changes but the argument is easier due to the stricter growth bounds on \(n\). 

\begin{lemma}
    Let \(u\in H^{\frac{s}{2}}(\R)\). Let \(\rho\) be a non-negative Schwartz function, and let \({\rho_R(x) = \rho(x/R)}\). Assume \(N\) satisfies \cref{assump:Nstar}. Then 
    \begin{equation*}
        \begin{split}
        \lim_{\R\to\infty}\left|\int_{\R}v(\rho_{R}N(u,u) - N(\rho_{R}u, u))\,dx\right| = 0\\
        \lim_{\R\to\infty}\left|\int_{\R}v((1-\rho_{R})N(u,u) - N((1-\rho_{R})u, u))\,dx\right| = 0
        \end{split}
    \end{equation*}
\end{lemma}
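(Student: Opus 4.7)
The plan is to mirror the structure of the proof of \cref{lem:commutator} for $N$, but replace the manipulations that exploited the separable form $n = \sum_{cyc} n_1(-\xi)n_2(\xi-\eta)n_3(\eta)$ by bilinear bounds coming from \cref{prop:Grafakos} and \cref{lem:Bstar_toB}. First, I would pass to Fourier variables via Plancherel and Fubini: using that convolution with $\hat{\rho}_R$ on the Fourier side corresponds to multiplication by $\rho_R$ in physical space, the difference can be written as
\begin{equation*}
\int_{\R^3}\bar{\hat{v}}(\xi)\,\hat{\rho}_R(t)\,\hat{u}(\eta)\,\hat{u}(\xi-t-\eta)\bigl[n(\xi-t-\eta,\eta)-n(\xi-\eta,\eta)\bigr]\,d\eta\,dt\,d\xi.
\end{equation*}
The problem is therefore reduced to controlling the bilinear form obtained by replacing $n$ with the differenced symbol, integrated against $\hat{\rho}_R(t)$.

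Following the original proof, I would split the $t$-integral at $|t|=R^{-1/2}$. For large $t$, use the triangle inequality on the bracketed difference to separate into two bilinear forms: one with the original symbol $n(\xi-\eta,\eta)$ (bounded by a product of $H^{s/2}$-norms directly via \cref{lem:qi_ri}) and one with the translated symbol $n(\xi-t-\eta,\eta)$. To control the translated form with dependence on $t$, write it via the representation \cref{eq:funcform_Bstar} using $\tilde{N}$ and weights $\jpb{\,\cdot\,}^{\tilde{r}_i}$ as in \cref{lem:Bstar_toB}; the $t$-shift introduces pre-factors $\jpb{t}^{|\tilde{r}_i|+1}$ exactly as in the treatment of $A$ and $B$ in \cref{lem:commutator}, because $\jpb{\xi-t}^{a}\lesssim \jpb{\xi}^{a}\jpb{t}^{|a|}$ and because $\tilde{N}$ (and its translates) remain bounded $L^{p_1}\times L^{p_2}\to L^p$ by \cref{prop:Grafakos}. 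This yields a bound of the form $\jpb{t}^{C}\norm{u}_{H^{s/2}}^{2}\norm{v}_{H^{s/2}}$ with $C$ independent of $t$.

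For small $t$, apply the mean value theorem to $n$ in its first argument. Here the key observation is that under \cref{assump:Nstar} every partial derivative $\partial^{\alpha}f$ obeys the same growth bound as $f$ itself, so $\partial_{1}n$ can be represented as a cyclic sum of functions satisfying the hypotheses of \cref{lem:Bstar_toB}. Consequently the bilinear form associated with $\partial_{1}n$ again maps $H^{s/2}\times H^{s/2}$ boundedly, giving an estimate $|t|\norm{u}_{H^{s/2}}^{2}\norm{v}_{H^{s/2}}$ for the small-$t$ integrand. Combining the two pieces and integrating in $t$,
\begin{equation*}
|I(R)|\lesssim \int_{|t|\geq R^{-1/2}}|\hat{\rho}_R(t)|\jpb{t}^{C}\,dt+\int_{|t|<R^{-1/2}}|\hat{\rho}_R(t)||t|\,dt,
\end{equation*}
and the usual rescaling $\hat{\rho}_R(t)=R\hat{\rho}(Rt)$ together with $\rho\in\Sc(\R)$ sends both integrals to zero as $R\to\infty$. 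The second identity follows immediately from the first by bilinearity of $N$, just as \cref{eq:com_B2} followed from \cref{eq:com_B1}.

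The main obstacle I foresee is the large-$t$ estimate: one needs to verify that translating the symbol $n(\,\cdot\,-t,\,\cdot\,)$ preserves the Coifman--Meyer-type hypotheses required by \cref{prop:Grafakos}, and that the operator norm grows only polynomially in $\jpb{t}$. Because \cref{assump:Nstar} requires global $L^\infty$ bounds on all derivatives of $f$ (and the weight $\jpb{\xi}^{r_1}\jpb{\xi-\eta}^{r_2}\jpb{\eta}^{r_3}$ factorizes cleanly under a shift in a single slot), this turns out to be controllable, but the bookkeeping of the $\jpb{t}$-factors is the most delicate part of the argument.
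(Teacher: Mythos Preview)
Your approach is plausible but substantially more elaborate than what the paper does, and the step you flag as delicate (polynomial control of the translated Coifman--Meyer norm in $t$ via \cref{prop:Grafakos}) is genuinely nontrivial to verify cleanly: you would need to check that the shifted symbol divided by the weights $\jpb{\xi}^{\tilde r_1}\jpb{\xi-\eta}^{\tilde r_2}\jpb{\eta}^{\tilde r_3}$ stays in $L^q(\R^2)\cap C^{M_q}$ with norm growing only polynomially in $\jpb{t}$, and this is not quite a one-line consequence of the factor inequality $\jpb{\xi-t}^{a}\lesssim\jpb{\xi}^{a}\jpb{t}^{|a|}$ since $f$ is not assumed separable.

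The paper sidesteps all of this. It observes that the stricter growth assumption $r_i\leq \tfrac{s}{2}-\tfrac{1}{4}$ in \cref{assump:Nstar} gives an extra quarter of decay, which makes a direct mean-value estimate work for \emph{all} $t$ without any split. Concretely, the mean value theorem gives
\[
|n(\xi-t-\eta,\eta)-n(\xi-\eta,\eta)|\lesssim |t|\jpb{t}^{s/2}\jpb{\xi}^{r_3-1/4}\jpb{\eta}^{s/2}\jpb{\xi-t-\eta}^{s/2},
\]
and since $r_3-\tfrac{1}{4}-\tfrac{s}{2}<-\tfrac{1}{2}$ one can absorb the $\xi$-weight in $L^2$ and close the estimate by H\"older and Young on the Fourier side, with no recourse to \cref{prop:Grafakos} or \cref{lem:Bstar_toB} at all. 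This yields the factor $\tfrac{1}{R}\int|\hat\rho(t)||t|\jpb{t/R}^{s/2}\,dt\to 0$ directly. So while your outline can likely be pushed through, the paper's argument is both shorter and avoids exactly the bookkeeping you were worried about.
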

\begin{proof}
We assume without loss of generality that \(r_1\leq r_2\leq r_3\). Combining Plancherel's and Fubini's theorems, we have that
\begin{equation}
    \begin{split}
    |\int_{\R}&v(\rho_{R}N(u,u) - N(\rho_{R}u, u))\,dx| \\
    &= |\int_{\R} \overline{\hat{v}(\xi)}(\hat{\rho}_{R}\ast \int_{\R}n(\cdot-\eta,\eta)\hat{u}(\cdot-\eta)\hat{u}(\eta)\,d\eta) (\xi)\\
    &\quad- \int_{\R}n(\xi-\eta, \eta)(\hat{\rho}_{R}\ast \hat{u})(\xi-\eta) \hat{u}(\eta) \,d\eta\,d\xi|\\
&\leq \int_{\R^{3}}|\overline{\hat{v}(\xi)}\hat{\rho}_{R}(t)\hat{u}(\eta)\hat{u}(\xi-t-\eta)||{n(\xi-t-\eta, \eta) - n(\xi-\eta, \eta)}|\,d\eta\,dt\,d\xi.
    \end{split}
    \label{eq:com_T_est2}
\end{equation}
By the mean value theorem, 
\begin{align*}
    |{n(\xi-t-\eta, \eta) - n(\xi-\eta, \eta)}|&\lesssim \abs{t}\jpb{t}^{r_3}\jpb{\xi}^{r_3}\jpb{\eta}^{r_3}\jpb{\xi-t-\eta}^{r_3},\\
\intertext{which, due to \(r_3<\frac{s}{2}-\frac{1}{4}\), leads to}
    |{n(\xi-t-\eta, \eta) - n(\xi-\eta, \eta)}| &\lesssim \abs{t}\jpb{t}^{\frac{s}{2}}\jpb{\xi}^{r_3-\frac{1}{4}}\jpb{\eta}^{\frac{s}{2}}\jpb{\xi-t-\eta}^{\frac{s}{2}}, \label{eq:mvt_Bstar}
\end{align*}
We insert this into \cref{eq:com_T_est2},
\begin{align*}
&|\int_{\R}v(\rho_{R}N(u,u) - N(\rho_{R}u, u))\,dx|\\
&\lesssim \int_{\R}|\hat{\rho}_{R}(t)|t|\jpb{t}^{\frac{s}{2}}|\int_{\R}|\overline{\hat{v}(\xi)}\jpb{\xi}^{r_3-\frac{1}{4}}|(|\hat{u}(\cdot)\jpb{\cdot}^{\frac{s}{2}}|\ast|\hat{u}(\cdot-t)\jpb{\cdot-t}^{\frac{s}{2}}|)(\xi)\,d\xi\,dt,\\
\intertext{and apply Young's convolution inequality to the inner integral, then H\"older's inequality, observing that \(r_3 - \frac{1}{4}-\frac{s}{2}<-\frac{1}{2}\),}
&\leq \norm{\overline{\hat{v}}\jpb{\cdot}^{\frac{s}{2}}}_{L^{2}}\norm{\jpb{\cdot}^{r_3-\frac{1}{4}-\frac{s}{2}}}_{L^2}\norm{\hat{u}\jpb{\cdot}^{s/2}}_{L^{2}}\norm{\hat{u}\jpb{\cdot}^{\frac{s}{2}}}_{L^{2}}\int_{\R}|\hat{\rho}(Rt)|t|\jpb{t}^{\frac{s}{2}}\,dt\\
&\leq \norm{v}_{H^{\frac{s}{2}}}\norm{u}_{H^{s/2}}^2\frac{1}{|R|}\int_{\R}|\hat{\rho}(t)|t|\jpb{t/R}^{r/2}\,dt.
\end{align*}
This expression approaches zero as \(R\) tends to infinity since \(\rho\in \Sc(\R)\), showing the first part of the lemma. As in \cref{lem:commutator} the second part follows from a simple calculation using that \(N\) is a bilinear operator.
\end{proof}

Finally, we prove \cref{lem:regularity_all} under \cref{assump:Nstar}. Here, the proof also simplifies due to the stricter growth bounds. 
\begin{lemma}[Regularity of solutions]
    Any solution \(u\in H^{\frac{s}{2}}(\R)\) of \cref{eq:solitary_wave} with \(\mathcal{Q}(u) = \mu\) is also in \(H^{\infty}(\R)\).
\end{lemma}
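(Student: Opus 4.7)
The plan is to run the same bootstrap as in \cref{lem:regularity_all}: rewrite the solitary-wave equation as $(M-\nu)u = N(u,u)$ and use that $(M-\nu)^{-1}\colon H^{t}(\R)\to H^{t+s}(\R)$ is bounded (since $\nu<m(0)$ still holds under \cref{assump:Nstar}). Once the estimate
\[
\norm{N(u,u)}_{H^{\alpha_0}} \lesssim \norm{u}_{H^{\frac{s}{2}}}^{2}
\]
is available for some $\alpha_0 > -\tfrac{s}{2}$, the iteration in \cref{lem:regularity_all} carries over verbatim and yields $u\in H^{\infty}(\R)$.

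To establish this key estimate under \cref{assump:Nstar}, the approach is to argue by duality: it suffices to bound $\abs{\intR v\,N(u,u)\,dx}$ uniformly over $v$ in the unit ball of $H^{-\alpha_0}(\R)$. The proof of \cref{lem:qi_ri} is genuinely trilinear; combining the cyclic decomposition $n=\sum_{cyc}f$, the renormalized multiplier $\tilde{N}$ from \cref{lem:Bstar_toB}, H\"older's inequality, and \cref{prop:Grafakos}, it yields
\[
\abs{\intR v_1\,N(v_2,v_3)\,dx} \lesssim \sum_{cyc}\norm{v_1}_{H^{\tilde{r}_1}_{q_1}}\norm{v_2}_{H^{\tilde{r}_2}_{q_2}}\norm{v_3}_{H^{\tilde{r}_3}_{q_3}},
\]
for any $q_1,q_2,q_3\geq 2$ with $\sum 1/q_i=1$, where the $\tilde{r}_i$ are the exponents from \cref{lem:Bstar_toB} (and satisfy the hypotheses of \cref{assump:N}).

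Specializing to $v_1 = v$, $v_2 = v_3 = u$, the problem reduces to choosing $q_1,q_2,q_3$ and $\alpha_0$ compatible with the Sobolev embeddings $H^{-\alpha_0}(\R)\hookrightarrow H^{\tilde{r}_1}_{q_1}(\R)$ and $H^{\frac{s}{2}}(\R)\hookrightarrow H^{\tilde{r}_i}_{q_i}(\R)$ for $i=2,3$ (and likewise for every cyclic role of $v$). Summing these three embedding inequalities under the constraint $\sum 1/q_i = 1$ forces $\alpha_0 \leq s-\tfrac{1}{2}-\sum_i \tilde{r}_i$, and since $\sum_i \tilde{r}_i <s-\tfrac{1}{2}$ by \cref{lem:Bstar_toB}, every $\alpha_0$ in the non-empty interval $(-\tfrac{s}{2},\,s-\tfrac{1}{2}-\sum_i \tilde{r}_i\,]$ is admissible. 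The main technical obstacle, exactly as in the proof of \cref{lem:B_upper}, is to verify that all three $q_i$ can simultaneously be chosen $\geq 2$ when one or more $\tilde{r}_i$ is sufficiently negative; this is handled by the same two-case split on the ordering of the $\tilde{r}_i$ relative to $\tfrac{s-1}{2}$, with the pairwise bound $\tilde{r}_i+\tilde{r}_j<\tfrac{3s-1}{2}$ from \cref{lem:Bstar_toB} playing the crucial role. Iterating the bootstrap, with a uniform positive lower bound on $\alpha_n - \alpha_{n-1}$, then produces $u\in H^{\infty}(\R)$.
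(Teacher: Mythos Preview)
Your first step—getting $\norm{N(u,u)}_{H^{\alpha_0}}\lesssim\norm{u}_{H^{s/2}}^{2}$ for some $\alpha_0>-s/2$ via duality and \cref{lem:qi_ri}—is fine, but the claim that ``the iteration in \cref{lem:regularity_all} carries over verbatim'' is where the argument breaks. The iteration in \cref{lem:regularity_all} does not merely use that one estimate as a black box; at each step it re-derives $\norm{N(u,u)}_{H^{\alpha_n}}\lesssim\norm{u}_{H^{\alpha_{n-1}+s}}^{2}$ by writing $N(u,u)=\sum L_i(L_ju\,L_ku)$ and applying the product rule (\cref{prop:product_rule}) to the \emph{local} product $L_ju\,L_ku$. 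That separable structure is exactly what \cref{assump:Nstar} does not provide. If you try to replace it by your duality/Grafakos route, every cyclic term forces the embedding $H^{-\alpha_n}(\R)\hookrightarrow H^{\tilde r_i}_{q}(\R)$ with $q\geq2$, which requires $-\alpha_n\geq\tilde r_i$. Since $\tilde r_i>r_i+\tfrac14$ can certainly be positive under \cref{assump:Nstar}, this caps $\alpha_n\leq-\max_i\tilde r_i<0$, and the bootstrap stalls below $H^{s}$.

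The paper avoids this entirely by a more elementary route that does not use \cref{prop:Grafakos} at all: it uses only the pointwise bound $|n(\xi-\eta,\eta)|\lesssim\sum_{cyc}\jpb{\xi}^{r_1}\jpb{\xi-\eta}^{r_2}\jpb{\eta}^{r_3}$, the redistribution inequality $\jpb{\xi}^{\theta}\lesssim\jpb{\xi-\eta}^{\theta}\jpb{\eta}^{\theta}$ for $\theta\geq0$, and H\"older/Young on the Fourier side to bound $\norm{\jpb{\xi}^{a}(\jpb{\cdot}^{b}|\hat u|\ast\jpb{\cdot}^{c}|\hat u|)}_{L^2}$. The redistribution lets one push the exponent on $\jpb{\xi}$ as negative as desired at the cost of raising the exponents on the two copies of $\hat u$; since $u\in H^{\alpha_{n-1}+s}$ with $\alpha_{n-1}$ growing, there is room, and one obtains roughly $\alpha_n=\tfrac{s}{2}+2\alpha_{n-1}\to\infty$. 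This mechanism for shifting derivatives from the outer operator onto the factors is precisely what your duality approach lacks.
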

\begin{proof}
    As in \cref{lem:regularity_all} we can write
    \begin{equation}
    (M-\nu)u = N(u,u),
    \label{eq:reg_start2}
    \end{equation}
    and observe that \((M-\nu)^{-1}\colon H^{t}(\R)\to H^{t + s}(\R)\). Assume without loss of generality that \(r_1 \leq r_2 \leq r_3\). Pick \(\alpha_0\) with \({-\frac{s}{2}<\alpha_0<\frac{s}{2} - \frac{1}{2} - (r_2 + r_3)}\), always possible since \(r_2 + r_3 <s-\frac{1}{2}\) by assumption. We will show that \(N(u,u)\in H^{\alpha_0}(\R)\). { Observe that
    \begin{equation*}
        \norm{N(u,u)}_{H^{\alpha_0}} \lesssim \|\left\langle \xi \right\rangle^{\alpha_{0}} \sum_{cyc}\left\langle \xi \right\rangle^{r_{1}}  (\left\langle \cdot \right\rangle^{r_{2}} \left|\hat{u}\right|\ast \left\langle \cdot \right\rangle^{r_{3}} \left|\hat{u}\right|) (\xi) \|_{L^2},
    \end{equation*}
    where the cyclic sum is in \(r_{1}, r_{2}, r_{3}\). H\"older's and Young's inequalities imply 
    \begin{align*}
        \left\|\left\langle \xi \right\rangle^a (\left\langle \cdot \right\rangle^b \hat{u}\ast \left\langle \cdot \right\rangle^c \hat{u}) (\xi) \right\|_{L^2}\lesssim \left\|\left\langle \xi \right\rangle^a  \right\|_{L^1}      \left\|\left\langle \xi \right\rangle^b \hat{u}(\xi) \right\|_{L^2}   \left\|\left\langle \xi \right\rangle^c \hat{u}(\xi) \right\|_{L^2},
    \end{align*}
    which is bounded if \(b,c \leq \frac{s}{2}\) and \(a<-\frac{1}{2}\). To bound \(\norm{N(u,u)}_{H^{\alpha_0}}\), we therefore want to redistribute the powers of \(\left\langle \xi \right\rangle, \left\langle \xi-\eta \right\rangle, \left\langle \eta \right\rangle\) using that \({\left\langle \xi \right\rangle\lesssim \left\langle \xi-\eta \right\rangle \left\langle \eta \right\rangle}\).
    Let \(\theta_1 = \theta_2 = \frac{s}{2}-r_3, \theta_3 = \frac{s}{2}-r_2\). Then
    \begin{align*}
        \norm{N(u,u)}_{H^{\alpha_0}} &\lesssim\|\left\langle \xi \right\rangle^{\alpha_{0}+r_{1}- \theta_{1}} (\left\langle \cdot \right\rangle^{r_{2}+\theta_{1}} \hat{u}\ast \left\langle \cdot \right\rangle^{r_{3}+\theta_{1}} \hat{u}) (\xi) \|_{L^2}\\
        &\quad +\|\left\langle \xi \right\rangle^{\alpha_{0}+r_{2}- \theta_{2}} (\left\langle \cdot \right\rangle^{r_{3}+\theta_{2}} \hat{u}\ast \left\langle \cdot \right\rangle^{r_{1}+\theta_{2}} \hat{u}) (\xi) \|_{L^2}\\
        &\quad +\|\left\langle \xi \right\rangle^{\alpha_{0}+r_{3}- \theta_{3}} (\left\langle \cdot \right\rangle^{r_{1}+\theta_{3}} \hat{u}\ast \left\langle \cdot \right\rangle^{r_{2}+\theta_{3}} \hat{u}) (\xi) \|_{L^2}\\
        &\lesssim \norm{u}_{H^{\frac{s}{2}}}^2 {\norm{\jpb{\xi}^{\alpha_0 + r_2 + r_3 - \frac{s}{2}}}_{L^1}}. 
    \end{align*}}
     By choice of \(\alpha_0\), then
    \begin{equation*}
        \norm{N(u,u)}_{H^{\alpha_0}}\lesssim \norm{u}_{H^{\frac{s}{2}}}^2.
    \end{equation*}  
    Combined with \cref{eq:reg_start2}, this implies that \((M-\nu)u \in H^{\alpha_0}(\R)\) which in turn implies \(u\in H^{\alpha_0 + s}(\R)\). Due to our choice of \(\alpha_0\), we have that \(\alpha_0 + s\) is strictly bigger than \(\frac{s}{2}\). Now we can do the same with \(\tilde{s} = \alpha_0 + s\) taking the place of \(\frac{s}{2}\) and obtain that
    \begin{equation*}
        \norm{N(u,u)}_{H^{\alpha_1}}\lesssim \norm{u}_{H^{\alpha_0 + s}}^{2},  
    \end{equation*}
    for \(\alpha_1 = \frac{s}{2} + 2\alpha_0 > \alpha_0\). The argument is the same as above, except now we set \(\theta_1 = \theta_2 = \alpha_0 + s - r_3, \theta_3 = \alpha_0 + s - r_2\). Iterating \(n\) times, 
    \begin{equation*}
        \norm{N(u,u)}_{H^{\alpha_n}}\lesssim \norm{u}_{H^{\alpha_{n-1} + s}}^2,
    \end{equation*}
    where \(\alpha_n + s = \frac{s}{2} + (n+1)(\frac{s}{2} + \alpha_0)\). We remark that the implicit constants increase with increasing \(\alpha_{n-1}\) due to the estimate \(\jpb{\xi}^{\theta_i}\lesssim \jpb{\xi-\eta}^{\theta_i}\jpb{\eta}^{\theta_i}\). Recall that \(\frac{s}{2} + \alpha_0\) is strictly positice by our choice of \(\alpha_0\). We can continue indefinitely and conclude that \(u\in H^{\infty}(\R)\).
\end{proof}
We have shown that all intermediate results hold if we assume \(N\) satisfies \cref{assump:Nstar}, thus proving \cref{thm:main} also in this case.

\section*{Acknowledgments}
The author would like to thank the referees for their careful reading, and their many valuable comments and helpful suggestions.

\appendix
\section{Necessity of symmetry assumption for a variational formulation}
\label{sec:symmetry}
Let \(N\) denote a bilinear Fourier multiplier with symbol \(n\) and \(P\) a trilinear Fourier multiplier with symbol \(p\):
\begin{equation*}
    \widehat{P(u,v,w)}= \intR p(\xi-\eta, \eta-\sigma, \sigma) \hat{u}(\xi-\eta)\hat{u}(\eta - \sigma) \hat{u}(\sigma)\,d\sigma\,d\eta.
\end{equation*}
Define the functional \(\mathcal{P}(u)\) by
\begin{equation*}
    \mathcal{P}(u) = \intR P(u,u,u)\,dx.
\end{equation*}
In this section, we derive sufficient and necessary conditions on \(N, P\) such that the Fréchet derivative 
\begin{equation*}
    D\mathcal{P}[u](v) = \intR vN(u,u) \,dx.
\end{equation*}
We will need the useful identities below, which hold as long as \(f,g,h\) are smooth enough for all integrals to make sense. 
\begin{lemma}
    Let \(f, g, h \in \Sc(\R)\) and \(p\) bounded above by a polynomial. Let \(P, P', P''\) be the trilinear Fourier multipliers with symbols \(p, p',p''\) respectively, where
    \begin{equation*}
        \begin{split}
    p'(\xi_1, \xi_2, \xi_3) &= p(\xi_3, \xi_1, \xi_2),\\
    p''(\xi_1, \xi_2, \xi_3) &= p(\xi_2, \xi_3, \xi_1).
        \end{split}
    \end{equation*}
    Then
    \begin{equation*}
        P(f,g,h) = P'(g,h, f) = P''(h, f, g),
    \end{equation*}  
    \label{prop_trilinear}
    \end{lemma}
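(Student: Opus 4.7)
The plan is to verify both equalities by rewriting the defining frequency integrals and performing an affine change of variables that re-parametrizes the hyperplane $\{(a,b,c)\in\R^3 : a+b+c = \xi\}$. The underlying idea is that $(\xi-\eta,\eta-\sigma,\sigma)$ merely parametrizes this hyperplane, and cyclically permuting the three entries is equivalent to choosing a different parametrization with unit Jacobian, at the same time that we cyclically permute $(f,g,h)$.

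Concretely, I would compute the Fourier side of $P'(g,h,f)$:
\begin{equation*}
\widehat{P'(g,h,f)}(\xi) = \int_{\R^2} p(\sigma,\xi-\eta,\eta-\sigma)\,\hat g(\xi-\eta)\hat h(\eta-\sigma)\hat f(\sigma)\,d\sigma\,d\eta,
\end{equation*}
using the definition $p'(\xi_1,\xi_2,\xi_3)=p(\xi_3,\xi_1,\xi_2)$. Then I introduce new variables $\eta' = \xi-\sigma$ and $\sigma' = \eta-\sigma$, inverted by $\sigma = \xi-\eta'$ and $\eta = \xi-\eta'+\sigma'$. A direct check gives
\begin{equation*}
\sigma = \xi-\eta',\qquad \xi-\eta = \eta'-\sigma',\qquad \eta-\sigma = \sigma',
\end{equation*}
and the Jacobian of $(\eta,\sigma)\mapsto(\eta',\sigma')$ has determinant $1$. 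Substituting back, the integrand becomes $p(\xi-\eta',\eta'-\sigma',\sigma')\,\hat f(\xi-\eta')\hat g(\eta'-\sigma')\hat h(\sigma')$, which is precisely the integrand of $\widehat{P(f,g,h)}(\xi)$. Hence $\widehat{P'(g,h,f)} = \widehat{P(f,g,h)}$, so $P'(g,h,f) = P(f,g,h)$ by Fourier inversion.

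For the second equality $P(f,g,h) = P''(h,f,g)$ I would apply the same strategy with the substitution $\eta' = \xi - (\eta-\sigma)$, $\sigma' = \xi - \eta$, which cycles the three components $(\xi-\eta,\eta-\sigma,\sigma)$ in the opposite direction and again has unit Jacobian, or simply apply the first identity twice (since two compositions of the cyclic shift recover the third permutation). The growth hypothesis on $p$, together with $f,g,h\in\Sc(\R)$, justifies the use of Fubini and the change of variables throughout; this is really the only point requiring care, and it is immediate under the stated polynomial bound. There is no substantive obstacle—the content of the lemma is purely bookkeeping about how cyclic shifts on the frequency parameters interact with cyclic shifts on the functions.
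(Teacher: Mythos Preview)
Your proof is correct and essentially the same as the paper's: both verify $P(f,g,h)=P'(g,h,f)$ by an affine change of variables with unit Jacobian on the Fourier side, and both obtain the second equality by iterating the first (the paper notes $(p')'=p''$). The only cosmetic difference is that the paper starts from $\widehat{P(f,g,h)}$ rather than $\widehat{P'(g,h,f)}$.
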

    \begin{proof}
    The first equality can be shown using the substitution \(\sigma \mapsto \eta -\sigma\) and \(\eta \mapsto \xi -\sigma\) combined with Plancherel and Fubini's theorems.
    \begin{align*}
    \widehat{P(f,g,h)}(\xi) &= \int_{\R^{2}} p(\xi-\eta, \eta-\sigma, \sigma) \hat{f}(\xi-\eta) \hat{g}(\eta-\sigma)\hat{h}(\sigma)\,d\sigma \,d\eta\\
    &= \int_{\R^{2}} p(\sigma, \xi-\eta, \eta-\sigma)\hat{f}(\sigma)\hat{g}(\xi-\eta) \hat{h}(\eta-\sigma) \,d\sigma\,d\eta\\
    &= \int_{\R^{2}} p'(\xi-\eta, \eta - \sigma,\sigma)\hat{g}(\xi-\eta)\hat{h}(\eta-\sigma)\hat{f}(\sigma)\,d\sigma\,d\eta\\
    &= \widehat{P'(g,h,f)}(\xi){.}
    \end{align*}
    To show the second inequality, observe that
    \begin{equation*}
        (p')'(\xi_1, \xi_2, \xi_3)= p'(\xi_3, \xi_1, \xi_2) = p(\xi_2, \xi_3, \xi_1) = p''(\xi_1, \xi_2, \xi_3), 
    \end{equation*}
    so that 
    \begin{equation*}
        P''(h,f,g) = (P')'(h,f,g) = P'(g, h, f) = P(f,g,h). 
    \end{equation*}
    \end{proof}

We assume that
\begin{equation}
    \begin{gathered}
        p(\xi-\eta, \eta-\sigma, \sigma)\lesssim \jpb{\xi-\eta}^{t}\jpb{\eta-\sigma}^{t}\jpb{\sigma}^{t},\\
        n(\xi-\eta, \eta)\lesssim\jpb{\xi}^{t}\jpb{\xi-\eta}^{t}\jpb{\eta}^{t} 
    \end{gathered}
    \label{eq:symgrowth}
\end{equation}
for some \(t>0\). 

\begin{proposition}
Let \(N, P, \mathcal{P}\) be as described, with \(p,n\) satisfying \cref{eq:symgrowth}. Let \(u, v \in H^{s}(\R)\), where \(s>t+\frac{1}{2}\). Then
\[
D\mathcal{P}[u](v) = \int v N(u,u)\,dx 
\]
if and only if
\[
n(\xi-\eta, \eta)= p(\eta, \xi-\eta, -\xi) + p(-\xi, \eta,\xi - \eta)+ p(\xi-\eta, -\xi, \eta).
\]
In this case, 
\begin{equation*}
    \mathcal{P}(u) = \int_{\R} P(u,u,u) \,dx = \frac{1}{3}\int_{\R}uN(u,u)\,dx.
\end{equation*}
\label{opposite_dir}
\end{proposition}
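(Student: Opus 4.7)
The plan is to compute $D\mathcal{P}[u](v)$ directly from the trilinearity of $P$, reduce the three terms that appear to a single integral of the form $\int Q(v,u,u)\,dx$ via the cyclic identity \cref{prop_trilinear}, and then match Fourier symbols with $\int v N(u,u)\,dx$. First, expanding $\mathcal{P}(u+v) - \mathcal{P}(u)$ gives
\[
D\mathcal{P}[u](v) = \int_{\R}[P(v,u,u) + P(u,v,u) + P(u,u,v)]\,dx,
\]
and the growth bound $|p(\xi_1,\xi_2,\xi_3)| \lesssim \jpb{\xi_1}^{t}\jpb{\xi_2}^{t}\jpb{\xi_3}^{t}$ together with $u,v\in H^{s}$ and the hypothesis $s > t+\tfrac{1}{2}$ make each term absolutely convergent and the quadratic and cubic remainders $o(\|v\|_{H^{s}})$, so this is the Fréchet derivative. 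Applying \cref{prop_trilinear} to write $P(u,v,u) = P'(v,u,u)$ and $P(u,u,v) = P''(v,u,u)$ collects everything into
\[
D\mathcal{P}[u](v) = \int_{\R} Q(v,u,u)\,dx, \qquad q(\xi_1,\xi_2,\xi_3) = p(\xi_1,\xi_2,\xi_3) + p(\xi_3,\xi_1,\xi_2) + p(\xi_2,\xi_3,\xi_1).
\]

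Next I would pass to Fourier variables on the constraint plane $\xi_1+\xi_2+\xi_3=0$, with $\xi_1$ the frequency of $v$ and $\xi_2,\xi_3$ those of the two $u$'s. Both sides of the proposed identity then take the form $\int \mathrm{symbol}(\xi_1,\xi_2,\xi_3)\,\hat v(\xi_1)\hat u(\xi_2)\hat u(\xi_3)$ with a trivial constraint measure: on the left the symbol is $q(\xi_1,\xi_2,\xi_3)$, while a straightforward application of Parseval to $\int v N(u,u)\,dx$, combined with $\xi_1 = -(\xi_2+\xi_3)$, shows the right-hand symbol is $n(\xi_2,\xi_3)$, independent of $\xi_1$. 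The equality of the two functionals on all Schwartz test functions $u,v$ is therefore equivalent, by a density/variation argument, to the pointwise identity of the symbols modulo the $\xi_2 \leftrightarrow \xi_3$ swap (which is freely allowed because $\hat u(\xi_2)\hat u(\xi_3)$ is symmetric). Setting $(\xi_1,\xi_2,\xi_3)=(-\xi,\xi-\eta,\eta)$ and distributing the $u$-symmetry among the three cyclic summands of $q$ produces exactly the representative stated in the proposition, and this yields both directions of the biconditional at once.

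The final claim $\mathcal{P}(u) = \tfrac{1}{3}\int_{\R} u N(u,u)\,dx$ is then a one-line consequence of Euler's homogeneity relation: $\mathcal{P}$ is cubic, hence $D\mathcal{P}[u](u) = 3\mathcal{P}(u)$, and the formula just established gives $D\mathcal{P}[u](u) = \int u N(u,u)\,dx$.

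The main obstacle I anticipate is not any single estimate but the careful bookkeeping of the three cyclic permutations of $p$ on the Fourier side, together with the observation that the functional determines $n$ only up to its symmetric part in $(\xi-\eta,\eta)$; this is why the right-hand side in the proposition is a natural representative rather than the only possible expression. The regularity hypothesis $s>t+\tfrac12$ plays an auxiliary but essential role, ensuring that Parseval and the derivative computation are rigorous (the relevant integrand is dominated by $\jpb{\xi_1}^{t-s}\jpb{\xi_2}^{t-s}\jpb{\xi_3}^{t-s}$ times an $L^2$-type product, integrable precisely when $s-t>\tfrac{1}{2}$).
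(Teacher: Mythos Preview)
Your proposal is correct and follows essentially the same route as the paper: expand by trilinearity, use the cyclic identity \cref{prop_trilinear} to bring all linear-in-$v$ terms to the form $P(v,u,u)+P'(v,u,u)+P''(v,u,u)$, show the higher-order remainders are $o(\|v\|_{H^s})$ via the growth bound and $s>t+\tfrac12$, and then match symbols on the Fourier side. The one genuine difference is your derivation of $\mathcal{P}(u)=\tfrac13\int uN(u,u)\,dx$ via Euler's homogeneity relation $D\mathcal{P}[u](u)=3\mathcal{P}(u)$; the paper instead recomputes directly using $P(u,u,u)=P'(u,u,u)=P''(u,u,u)$. Your version is slicker and avoids repeating the Fourier calculation, while the paper's version is more self-contained. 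Your remark that $n$ is determined only up to its symmetric part in $(\xi-\eta,\eta)$ is a useful observation that the paper leaves implicit.
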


\begin{proof}
Since the Fréchet derivative is unique,
\[
D\mathcal{P}[u](v) = \intR vN(u,u) \,dx
\]
if and only if
\begin{equation}
\frac{|\intR 1\cdot \F^{-1}(\widehat{P(u+v,u+v,u+v)} - \widehat{P(u,u,u)})\,dx - \intR vN(u,u)\,dx|}{\norm{v}_{H^{s}}} \to 0
\label{derivative_start}
\end{equation}
as \(\norm{v}_{H^{s}}\to 0\).

We apply Lemma \ref{prop_trilinear} and find that
\begin{align*}
P(u+v,u+v&,u+v) -P(u,u,u)\\
&= P''(v,u,u) + P'(v,u,u) + P'(v,v,u)\\
&\quad+ P(v,u,u) + P''(v,v, u) + P(v,v,u) + P(v,v,v).  
\end{align*}
When inserted back into the integral above, all terms where \(v\) appears at least twice will vanish as \(\norm{v}_{H^{s}}\to 0\), as demonstrated for \(P(v,v,u)\):
\begin{align*}
    \frac{\abs{\intR 1\cdot \mathcal{F}^{-1}(\widehat{P(v,v,u)})\,dx}}{\norm{v}_{H^{s}}}
    &=\frac{|\int_{\R^{2}} p(-\eta, \eta-\sigma, \sigma)\hat{v}(-\eta)\hat{v}(\eta-\sigma)\hat{u}(\sigma)\,d\sigma\,d\eta|}{\norm{v}_{H^{s}}}\\
    &\lesssim \frac{\norm{|\jpb{\cdot}^{t}\hat{v}| (|\jpb{\cdot}^{t}\hat{u}| \ast |\jpb{\cdot}^{t}\hat{u}|)}_{L^{1}}}{\norm{v}_{H^{s}}}\\
    &\lesssim \frac{\norm{v}_{H^{t}}^2 \norm{|\jpb{\cdot}^{t}\hat{u}|}_{L^{1}}}{\norm{v}_{H^{s}}}\to 0
    \end{align*}
as \(\norm{v}_{H^{s}}\to 0\) since \(\norm{|\jpb{\cdot}^{t}\hat{u}|}_{L^{1}}\lesssim \norm{u}_{H^s}\norm{\jpb{\cdot}^{t-s}}_{L^2}\) is finite. 
\Cref{derivative_start} then becomes 
\begin{align*}
&\frac{1}{\norm{v}_{H^{s}}}|\int_{\R} 1\cdot \F^{-1}(\widehat{P''(v,u,u)} + \widehat{P'(v,u,u)} + \widehat{P(v,u,u)})\,dx - \int_{\R} vN(u,u)\,dx|\\
&\qquad\,\,\,\qquad=\frac{1}{\norm{v}_{H^{s}}}\Bigl|\int_{\R^{2}} (p''+ p' + p)(-\eta, \eta-\sigma, \sigma)\hat{v}(-\eta)\hat{u}(\eta-\sigma)\hat{u}(\sigma)\,d\sigma\,d\eta \\
&\qquad\qquad\,\,\,\quad\,- \int_{\R^{2}}\hat{v}(-\xi) n(\xi-\eta, \eta)\hat{u}(\xi-\eta)\hat{u}(\eta)\,d\eta\,d\xi\Bigr|.\\
\intertext{Renaming \( \eta \mapsto \xi, \sigma\mapsto \eta \) in the first integral, this is equal to}
&\frac{1}{\norm{v}_{H^{s}}}|\int_{\R^{2}} ((p''+ p' + p)(-\xi, \xi-\eta, \eta) - n(\xi-\eta, \eta))\hat{v}(-\xi)\hat{u}(\xi-\eta)\hat{u}(\eta)\,d\eta\,d\xi|.
\end{align*}
This expression is only zero as \(\norm{v}_{H^{s}}\to 0\) if
\begin{equation*}
    \begin{split}
n(\xi-\eta, \eta) &=(p+p'+p'')(-\xi, \xi-\eta, \eta)\\
 &=p(\xi-\eta, \eta, -\xi) + p(\eta, -\xi, \xi-\eta) + p(-\xi, \xi-\eta, \eta),
    \end{split}
\end{equation*}
which is what we wanted to show.

By \cref{prop_trilinear}, clearly
\begin{equation*}
    P(u,u,u) = P'(u,u,u) = P''(u,u,u).
\end{equation*}
Hence, 
\begin{align*}
    \mathcal{P}(u)&= \frac{1}{3}\intR P(u,u,u) + P'(u,u,u) + P''(u,u,u)\,dx\\
    &= \frac{1}{3}\int_{\R^2} (p+p'+p'')(-\xi, \xi-\eta, \eta)\hat{u}(-\xi)\hat{u}(\xi-\eta)\hat{u}(\eta)\,d\eta\,d\xi\\
    &= \frac{1}{3}\intR\hat{u}(-\xi)\intR n(\xi-\eta, \eta)\hat{u}(\xi-\eta)\hat{u}(\eta)\,d\eta\,d\xi\\
    &= \frac{1}{3}\intR uN(u,u)\,dx.
\end{align*}
\end{proof}

\section{Comparison with related equations}
\label{app:equations}
\subsection{A Whitham-Boussinesq system}
In \cite{dinvay2021}, the authors consider solitary-wave solutions of the Whitham-Boussinesq system
\begin{align*}
    L\eta_{t} + L v_{x} + (\eta v)_{x}= 0,\\      
    L v_{t} + \eta_{x} + v v_{x} = 0, 
\end{align*}
where \(L\) is a linear Fourier multiplier with symbol \(m(\xi)\) satisfying assumption very similar to \cref{assump:M}, but with \(s>\frac{1}{2}, s'>\frac{1}{2}\) and \(m(0)>0\) (as opposed to \(s>0, s'>\frac{1}{2}\) in Assumption \(\mathfrak{M}\)). In addition, the kernel of \(L^{-\frac{1}{2}}\) must satisfy certain integrability assumptions. 

They show existence of solitary-wave solutions to the above system by reducing it, through a set of changes of variables, to a single traveling wave equation,
\begin{equation}
    \lambda u + L^{-\frac{1}{2}}\left(\frac{(L^{-\frac{1}{2}}u)^{3}}{2}\right) + L^{-\frac{1}{2}}(L^{\frac{1}{2}}u {L^{-\frac{1}{2}}u}) + L^{\frac{1}{2}}\left(\frac{(L^{-\frac{1}{2}}u)^2}{2}\right) + Lu = 0,\label{eq:cubic}
\end{equation}
and apply Lion's concentration--compactness principle. This is equation \(1.27\) in \cite{dinvay2021}. 
If we ignore the cubic term, this equation is of the form 
\begin{equation}
    -\nu u + Mu - N(u,u) = 0,\label{myEq}
\end{equation}
with \(\nu = \lambda\), \(M=L\) and \(N\) a bilinear Fourier multiplier with symbol
\begin{equation*}
    n(\xi-\eta, \eta) =\frac{1}{2} \sum_{cyc} m(-\xi)^{\frac{1}{2}} m(\xi-\eta)^{-\frac{1}{2}} m(\eta)^{-\frac{1}{2}}.
\end{equation*}
We can verify that this satisfies our \cref{assump:N}, save a few technical details. Clearly, we must assume that \(n_{i} \in C^{1}( \mathbb{R})\), a slightly stronger assumption than the uniform continuity in \cite{dinvay2021}. Then, the bounds on \(n_{i}\) and the derivatives are satisfied for \(r_1 = \frac{s}{2}, r_{2} = r_{3} = -\frac{s}{2}\). 
Furthermore, 
\begin{align*}
    \sum_{i=1}^3 r_{i} = -\frac{s}{2} < s - \frac{1}{2}\,\, (>0),\\ 
    r_{i} + r_{j} \leq 0 < \frac{3s-1}{2} \,\, (>\frac{1}{4}).
\end{align*}
The only problem is that \(r_{1}\) is not strictly smaller than, but equal to \(\frac{s}{2}\). However, a close examination of our proof reveals that a strict inequality for \(\max_{i} r_{i}\) is only needed in the existence proof when \(\min_{i} r_{i}> \frac{s-1}{2}\) (at the end of the proof of Lemma 2.3).  When \(\min_{i} r_{i}\leq \frac{s-1}{2}\), which is the case here, \(\max_i r_{i} \leq \frac{s}{2}\) suffices. 

In the cubic term in \cref{eq:cubic}, all operators are of negative order, and the term could probably be treated with similar methods as in \cite{marstrander}.

\subsection{Babenko's equation}
Babenko's equation on finite depth modified to include surface tension, is given by
\begin{align}
   0 &= -u +\nu^{2}Ku - uKu - K(\frac{u^2}{2})\label{Babenko} \\
&\qquad+\beta \left(\frac{u'}{\sqrt{u'^2 + (1+Ku)^2}}\right)' - \beta K\left(\frac{1 + Ku}{\sqrt{u'^2 + (1 + Ku)^2}}-1 \right).\nonumber
\end{align}
This is eq. (17) in \cite{buffoni2004}. Here, \(\nu\) and \(\beta\) are parameters related to the wave speed and surface tension respectively. Setting \(\beta = 0\) gives Babenko's equation for pure gravity waves on {finite depth\cite{constantin2016}}. \(K\) is a linear Fourier multiplier given by
\[
\widehat{Ku}(\xi) = \xi \coth(\xi) \hat{u}(\xi) = k(\xi) \hat{u}(\xi)
.\] 
Compared with \cref{myEq}, the parameter \(\nu\) is in front of the wrong term, and the equation has non-polynomial nonlinearities. To compare it to equations like \cref{myEq}, we expand the \(\beta\)-terms in small \(u\) to second order, apply \(K^{-\frac{1}{2}}\) to both sides of the equation and make a change of variables \(u= K^{-\frac{1}{2}}v\). Then{
\begin{align*}
    0 &= K^{-1}v - \nu^{2}v + K^{-\frac{1}{2}}(K^{-\frac{1}{2}}v K^{\frac{1}{2}}v) + \frac{1}{2}K^{\frac{1}{2}}(K^{-\frac{1}{2}}v)^2\\
    &\quad - \beta K^{-1}v''+\beta K^{-\frac{1}{2}}\partial_{x}( K^{-\frac{1}{2}}v'K^{\frac{1}{2}}v) - \frac{\beta}{2} K^{\frac{1}{2}}(K^{-\frac{1}{2}}v')^2.
\end{align*}
This corresponds to 
\begin{align*}
    m(\xi) &= (1+\beta \xi^2)\frac{\tanh \xi}{\xi},\\
    n(\xi-\eta, \eta)&= \frac{1}{2}\sum_{cyc}k(-\xi)^{-\frac{1}{2}}k(\xi-\eta)^{-\frac{1}{2}}k(\eta)^{\frac{1}{2}}\\
    &\quad + \frac{1}{2}\beta \sum_{cyc} k(-\xi)^{-\frac{1}{2}}(-\xi) k(\xi-\eta)^{-\frac{1}{2}}(\xi-\eta)k(\eta)^{\frac{1}{2}},
\end{align*}}
where the sums are over cyclic permutations of \(-\xi, \xi-\eta, \eta\). 

For \(\beta>0\), we have \(s=1, s' = 2\). Then, the first sum in the expression for \(n\) is covered by our theory, but in {the last sum }\(n\) grows too quickly: \(r_{i} = \frac{1}{2}, i = 1,2,3\) and \(\sum_{i=1}^3 r_{i} = \frac{3}{2}\). Of course, the initial expansion of the \(\beta\)-terms only makes sense for small solutions, in which case some of our growth assumptions could probably be relaxed.

In the pure gravity case, \(\beta = 0\), the equation is exact, but it corresponds to negative \(s\), which is not covered by our theory.

\bibliographystyle{siam}
\bibliography{Paper1}


\end{document}